\documentclass[11pt]{article}
\input{style.sty}

\begin{document}
\title{Solving mathematical programs with complementarity constraints \\ by disjunctive regularizations}
\author{
S. L\"ammel
\thanks{
Department of Mathematics, Chemnitz University of Technology,
Reichenhainer Str. 41, 09126
Chemnitz, Germany; e-mail: sebastian.laemmel@mathematik.tu-chemnitz.de (corresponding author), vladimir.shikhman@mathematik.tu-chemnitz.de.
 } \and V. Shikhman\samethanks[1]
}

\maketitle

\maketitle
\vspace{-5ex}
\abstract{We propose a new disjunctive regularization for mathematical programs with complementarity constraints (MPCC). Its feasible set coincides with that of the Kanzow-Schwartz regularization. However, their functional descriptions differ considerably. For the disjunctive regularization, the logical operator OR and equivalent max-type constraints are used. Unlike the Kanzow-Schwartz, the disjunctive regularization satisfies the tailored linear independence constraint qualification if the original MPCC does. More than that, the favorable convergence properties -- known to hold for the Kanzow-Schwartz regularization -- remain valid for the disjunctive regularization as well. In particular, no second order necessary conditions are required to guarantee convergence towards S-stationary points of MPCC. Additionally, we keep track of the topological type of approximating and limiting nondegenerate C-stationary points in terms of their C-indices. Quadratic and biactive parts of the C-indices are shown to generically correspond to each other while regularizing. This is a new phenomenon as compared to the Scholtes or sign-type regularizations studied before. Numerical experiments illustrate that the proposed disjunctive regularization clearly outperforms the Kanzow-Schwartz regularization. 
Its numerical performance is even better than that of the Scholtes regularization if solving MPCCs with high accuracy.
}

\vspace{2ex}
{\bf Keywords: mathematical programs with complementarity constraints, Scholtes regularization, Kanzow-Schwartz regularization, disjunctive optimization, nondegeneracy, S-, M-, and C-stationarity, C-index
}

\vspace{2ex}
{\bf MSC-classification: 90C33, 49M20
}

\section{Introduction}\label{sec:intro}

We consider mathematical programs with complementarity constraints:
\[
\mbox{MPCC}: \quad
\min_{x} \,\, f(x)\quad \mbox{s.\,t.} \quad x \in M
\]
with the feasible set
\[
M=\left\{x \in\R^n\, \left\vert\,
F_{1,j}(x) \cdot F_{2,j}(x)=0, F_{1,j}(x) \ge 0, F_{2,j}(x)\ge 0, j=1,\ldots,\kappa \right.\right\},
\]
where the defining functions $f \in C^2(\R^n,\R)$,
$F_1, F_2 \in C^2(\R^n,\R^\kappa)$ are twice continuously differentiable. A typical MPCC feasible set $M$ is depicted in Figure \ref{fig:feasiblesetMPCC}, where $n=2$, $\kappa=1$, $F_{1,1}(x)=x_1$ and $F_{2,1}(x)=x_2$. 
Complementarity constraints usually arise as Karush-Kuhn-Tucker conditions in the context of generalized Nash equilibrium problems, bilevel optimization, and semi-infinite programming, see e.g.~\cite{dempe:2002, stein:2003}. 

\begin{figure}[h]
	\centering
	\begin{tikzpicture}[xscale=0.55,yscale=0.55,domain=-0.5:5,samples=100]
	\draw node[below left] {0};
	\draw[->] (-0.5,0) -- (5,0);
	\draw[blue,line width=2] (0,0) -- (4.75,0) node[below,black] {$F_{1,j}(x)$};
	\draw[blue, line width=2] (0,0) -- (0,4.75) ;
	\draw[] (0,-0.5) -- (0,0);
	\draw[->] (0,4) -- (0,5) node[left] {$F_{2,j}(x)$};
	\end{tikzpicture}
    \begin{tikzpicture}[xscale=0.55,yscale=0.55,domain=-0.5:5,samples=100]
    \draw node[below left] {0};
    \fill [blue!20]
        (0, 0)
      -- plot[domain=0:1/4.75] (\x, {4.75})
      -- (1/4.75, 0);
    \fill [blue!20]
        (1/4.75, 0)
      -- plot[domain=1/4.75:4.75] (\x,{1/ \x})
      -- (4.75, 0);
    \draw[blue] plot[domain=1/4.75:4.75] (\x,{1/ \x});
    \draw[->] (-0.5,0) -- (5,0);
        \draw[->] (0,4) -- (0,5) node[left] {$F_{2,j}(x)$};
     \draw[blue,line width=2] (0,0) -- (4.75,0) node[below,black] {$F_{1,j}(x)$};
    \draw[blue, line width=2] (0,0) -- (0,4.75) ;
    \draw[] (0,-0.5) -- (0,0);
\end{tikzpicture}
\begin{tikzpicture}[xscale=0.55,yscale=0.55,domain=-0.5:5,samples=100]
    \draw node[below left] {0};
    \fill [blue!20]
        (0, 0)
      -- plot[domain=0:1] (\x, {4.75})
      -- (1,0);
    \fill [blue!20]
        (0.9, 0)
      -- plot[domain=0.9:4.75] (\x,{1})
      -- (4.75, 0);
    \draw[blue] (1,1) -- (4.75,1);
    \draw[blue] (1,1) -- (1,4.75);
    \draw[->] (-0.5,0) -- (5,0);
        \draw[->] (0,4) -- (0,5) node[left] {$F_{2,j}(x)$};
     \draw[blue,line width=2] (0,0) -- (4.75,0) node[below,black] {$F_{1,j}(x)$};
    \draw[blue, line width=2] (0,0) -- (0,4.75) ;
    \draw[] (0,-0.5) -- (0,0);
\end{tikzpicture}
	\caption{Feasible sets $M$, $M^{\text{S}(t)}$, and $M^{\text{KS}(t)}={M}^{\text{D}(t)}$}  
	\label{fig:feasiblesetMPCC}
\end{figure}

As for numerical methods to tackle MPCCs, various regularization schemes were proposed in literature so far, see e.g.~\cite{hoheisel:2013} for an overview. 
It all started with the Scholtes regularization from the seminal paper \cite{scholtes:2001}:
\[
\text{S}(t):\quad
\min_{x} \,\, f(x)\quad \mbox{s.\,t.} \quad x \in M^{\text{S}(t)}
\]
with the feasible set
\[
    M^{\text{S}(t)}=\left\{x \in\R^n\, \left\vert\,
    F_{1,j}(x) \cdot F_{2,j}(x)\le t, F_{1,j}(x) \ge 0,F_{2,j}(x)\ge 0, j=1,\ldots,\kappa
    \right.\right\},
\]
where the parameter $t>0$ is positive.
In Figure \ref{fig:feasiblesetMPCC}, the Scholtes regularization of the typical MPCC feasible set is depicted. Convergence results for the Scholtes regularization $\text{S}(t)$ focus on its Karush-Kuhn-Tucker points if $t \rightarrow 0$, see \cite{scholtes:2001}. Under the MPCC-tailored linear independence constraint qualification (MPCC-LICQ), they are shown to converge to a C-stationary point of MPCC. If additionally the multipliers for biactive complementarity constraints do not vanish (MPCC-ND2), one obtains an  M-stationary point in the limit. Assuming in addition a certain second order necessary condition (SONC) to hold there, even S-stationarity of the limiting point can be guaranteed. We mention that S-stationarity is necessary for optimality of MPCC under MPCC-LICQ. M-stationarity just adds singular saddle points of first order to the latter. The concept of C-stationarity is topologically motivated and includes also regular saddle points of first order, see \cite{shikhman:2025}. 

Another attempt to regularize MPCCs has been undertaken in \cite{kanzow:Relax}. There, the following Kanzow-Schwartz regularization has been proposed:
\[
\mbox{KS}(t): \quad
\min_{x} \,\, f(x)\quad \mbox{s.\,t.} \quad x \in M^{\text{KS}(t)}
\]
with the feasible set
\[
    M^{\text{KS}(t)}=\left\{
    x \in\R^n\, \left\vert\,   
        \Phi_j \left(x,t\right)\le 0,
    F_{1,j}(x) \ge 0, F_{2,j}(x)\ge 0, j=1,\ldots,\kappa 
            \right. \right\},
\]
where
    \[
\Phi_j \left(x,t\right)=\varphi(F_{1,j}(x)-t,F_{2,j}(x)-t), \quad
\varphi \left(a, b\right)=\left\{
\begin{array}{ll}
    a\cdot b & \mbox{for } a+b\ge 0, \\
     -\frac{1}{2}\left(a^2+b^2\right) & \mbox{for } a+b< 0.
\end{array}\right.
\]
We refer to Figure \ref{fig:feasiblesetMPCC}, where the Kanzow-Schwartz regularization of the typical MPCC feasible set is depicted. In \cite{kanzow:Relax}, it has been suggested to treat $\text{KS}(t)$ as an instance of nonlinear programming (NLP). Theoretically, the Kanzow-Schwartz regularization has better convergence properties compared to the Scholtes regularization. Under MPCC-LICQ, the Karush-Kuhn-Tucker points of $\text{KS}(t)$ converge towards an M- rather than C-stationary point of MPCC. If one additionally assumes MPCC-ND2 there, S- instead of M-stationarity of the limiting point follows.
Nevertheless, better theoretical properties of $\text{KS}(t)$ could not be practically confirmed, see \cite{hoheisel:2013, nurkanovic:2024}. According to these numerical studies, the Scholtes regularization seems to outperform the Kanzow-Schwartz regularization by accuracies in both function value and constraint violation, as well as by calculation time. From our viewpoint, this can be explained  by the fact that the Kanzow-Schwartz regularization $\text{KS}(t)$ happens to be degenerate as NLP even if the original MPCC is nondegenerate. E.g., the standard linear independence constraint qualification (LICQ) is potentially violated on its feasible set $M^{\text{KS}(t)}$. This degeneracy may lead to the catastrophic cancellation of multipliers if computing $\varepsilon$-stationary points of $\text{KS}(t)$ as it has been suggested to do in \cite{kanzow:Epsilon}. 
Another issue with the Kanzow-Schwartz regularization is that $M^{\text{KS}(t)}$ exhibits "re-entering" corners, cf.~Figure \ref{fig:feasiblesetMPCC}. This is typical for disjunctive optimization problems -- being treated first in \cite{jongen:1997} -- rather than for NLPs. Sadly enough, even viewed as an instance of disjunctive optimization the Kanzow-Schwartz regularization ${\text{KS}(t)}$ may violate the disjunctive-tailored linear independence constraints qualification (DISJ-LICQ).

Motivated by these shortcomings of $\text{KS}(t)$, we suggest a new disjunctive regularization of MPCC:
\[
\text{D}(t): \quad
\min_{x} \,\, f(x)\quad \mbox{s.\,t.} \quad x \in {M}^{\text{D}(t)}
\]
with the feasible set
\[
    {M}^{\text{D}(t)}=\left\{x \in\R^n\, \left\vert\,
    \max\{t-F_{1,j}(x), t-F_{2,j}(x)\}\ge 0, F_{1,j}(x) \ge 0, F_{2,j}(x)\ge 0, j=1,\ldots,\kappa \right.\right\}.
\]
 The term "disjunctive" is used here since the max-type constraint in the description of $M^{\text{D}(t)}$ can be equivalently written as 
\[
    t-F_{1,j}(x) \ge  0 \quad \mbox{or} \quad t-F_{2,j}(x) \ge 0.
\]
Note that the feasible sets of $\text{KS}(t)$ and $\text{D}(t)$ coincide, i.e.~$M^{\text{KS}(t)}=M^{\text{D}(t)}$, cf.~Figure \ref{fig:feasiblesetMPCC}. In turn, it is crucial that their functional descriptions differ.
The main advantage of considering $\text{D}(t)$ instead of $\text{KS}(t)$ is that DISJ-LICQ for the former is now inherited from the original MPCC-LICQ. Further, the favorable convergence properties over the Scholtes regularization $\text{S}(t)$ remain valid for $\text{D}(t)$. Under MPCC-LICQ, M-stationary points of $\text{D}(t)$ are shown to converge towards an M-stationary point of MPCC as $t \rightarrow 0$. Moreover, they approach even an S-stationary point, if MPCC-ND2 holds there. Exactly as in the case of Kanzow-Schwartz regularization, it is not necessary to assume SONC for the latter claim. The analogy to the theoretical results from \cite{kanzow:Relax} becomes complete, after we note that the S-stationary points of $\text{D}(t)$ turn out to be Karush-Kuhn-Tucker points of $\text{KS}(t)$ and vice versa.  

We enlarge our convergence analysis of $\text{D}(t)$ beyond what has been done for $\text{KS}(t)$ in \cite{kanzow:Relax}. Since DISJ-LICQ generically holds for $\text{D}(t)$, we may investigate if the topological type of its nondegenerate C-stationary points and their limiting counterparts for MPCC change. The topological type of a C-stationary point is captured by its C-index, which has a biactive and a quadratic part. The biactive part is related to the signs of multipliers for biactive constraints, whereas the quadratic part focuses on the number of negative eigenvalues of the restricted Hessians of the corresponding Lagrange function. By means of the C-index we may classify nondegenerate C-stationary points, in order to unambiguously distinguish minimizers and different kinds of saddle points from each other in algebraic terms. In Theorem \ref{thm:ttoc} on convergence and in Theorem \ref{thm:wellposedness} on well-posedness for $\text{D}(t)$, we successively keep track of the C-indices while regularizing. Generically, their quadratic and biactive parts with respect to the approaching and limiting C-stationary points of $\text{D}(t)$ and MPCC, respectively, are shown to correspond to each other. 
This is in strong contrast to the Scholtes regularization, for which the change of topological types has been fully clarified in \cite{laemmel:anomalies}. There, the only present quadratic index of Karush-Kuhn-Tucker points of $\text{S}(t)$ has been shown to nontrivially split into the biactive and quadratic parts of the C-index of the limiting C-stationary point of MPCC. Our investigation of indices explains why the convergence towards S-stationary points for the Scholtes regularization needs SONC, while for the disjunctive regularization it does not. 
Also for the sign-type regularization from \cite{kadrani:2009}, the behavior of the corresponding indices is different. There, the index shift stably occurs while regularizing and bifurcation phenomena prevail, see \cite{laemmel:sign}. 

As a consequence of our refined convergence analysis on the disjunctive regularization, the nondegenerate minimizers of MPCC with biactive
complementarity constraints will be necessarily approached by de facto Karush-Kuhn-Tucker points of $\text{D}(t)$. This generic avoiding of "re-entering" corners by approximating minimizers of the disjunctive regularization can explain its surprisingly good numerical performance. We test Kanzow-Schwartz, disjunctive, and Scholtes regularizations on small and middle-sized MPCC instances from the MacMPEC database of \cite{macmpec:2015}. As solvers for  $\text{KS}(t)$, $\text{D}(t)$, and $\text{S}(t)$, we use SLSQP from \cite{scipy:2020}, IPOPT from \cite{ipopt:2006}, and LogMIP from \cite{logmip}. Overall, the proposed disjunctive regularization clearly outperforms the Kanzow-Schwartz regularization. This concerns normalized relative error with respect to the objective function values, maximum constraint violation, and computational time. Surprisingly enough, the numerical performance of the disjunctive regularization is even better than that of the Scholtes regularization if solving MPCCs with high accuracy. As far as we know, this is for the first time in literature that a regularization numerically competitive to that of Scholtes is constructed. 

The organization of our paper is as follows. Section \ref{sec:preliminaries} is devoted to the basics on MPCC and disjunctive optimization. Main results on convergence and well-posedness properties of the disjunctive regularization $\text{D}(t)$ can be found in Section \ref{sec:main}. In Section \ref{sec:kanzow}, we relate $\text{D}(t)$ to the Kanzow-Schwartz  regularization $\text{KS}(t)$ and discuss the latter from the emerging perspective. Section \ref{sec:num} reports numerical results on $\text{D}(t)$ in comparison to the Scholtes and Kanzow-Schwartz regularizations.

Our notation is standard. We denote the $n$-dimensional Euclidean space by $\R^n$.
Given a twice continuously differentiable function $f:\R^n\to \R$, we denote its gradient by $\nabla f$ as a column vector and its Hessian matrix by $D^2f$. 


\section{Preliminaries}
\label{sec:preliminaries}


\subsection{Preliminaries on MPCC}\label{sec:pre-mpcc}


In what follows, we briefly recall the basics from the theory of MPCC, see e.g.~\cite{shikhman:2012}. 
To this end, the index sets of active constraints associated with $\bar x \in M$ will be helpful:
\[
\begin{array}{c}
a_{01}\left(\bar x\right)=\left\{j\,\left\vert\,  F_{1,j}(\bar x)=0, F_{2,j}(\bar 
x)>0\right.\right\},\\
a_{10}\left(\bar x\right)=\left\{j\,\left\vert\,  F_{1,j}(\bar x)>0, F_{2,j}(\bar  x)=0\right.\right\}, \\
a_{00}\left(\bar x\right)=\left\{j\,\left\vert\,  F_{1,j}(\bar x)=0, F_{2,j}(\bar  x)=0\right.\right\}.  
\end{array}
\]

We start our exposition with the MPCC-tailored linear independence constraint qualification.

\begin{definition}[MPCC-LICQ]
We say that a feasible point $\bar x \in M$ satisfies the MPCC-tailored linear independence constraint qualification (MPCC-LICQ) if the following vectors are linearly independent:
\[
\nabla F_{1,j}\left(\bar x\right), j \in a_{01}\left(\bar x\right)\cup a_{00}\left(\bar x\right),\quad
\nabla F_{2,j}\left(\bar x\right), j \in a_{10}\left(\bar x\right)\cup a_{00}\left(\bar x\right).
\]
\end{definition}
It is well-known that MPCC-LICQ is not restrictive in the sense of genericity.  
The subset of defining functions $F_1, F_2$, for which MPCC-LICQ is fulfilled at all
feasible points $\bar x \in M$ of a corresponding MPCC,  is $C^2_s$-open and -dense with respect to the strong (or Whitney-) topology, see \cite{scholtes:2001a}.

Next, we proceed with the different stationarity notions for MPCC used in the literature.

\begin{definition}[Stationarity for MPCC]
    \label{def:c-stat}
A feasible point $\bar x \in M$ is called C-stationary for MPCC if there exist multipliers
\[
\begin{array}{l}
\bar \sigma_{1,j}, j \in a_{01}\left(\bar x\right),\quad \bar \sigma_{2,j}, j \in a_{10}\left(\bar x\right),\quad \bar \varrho_{1,j},\bar \varrho_{2,j}, j \in a_{00}\left(\bar x\right),
\end{array}
\]such that 
\begin{equation}
   \label{eq:cstat-1} 
   \begin{array}{rcl}
   \displaystyle \nabla f(\bar x)&=& \displaystyle \sum\limits_{j \in a_{01}\left(\bar x\right)} \bar \sigma_{1,j}\nabla  F_{1,j}\left(\bar x\right)
    + \sum\limits_{j \in a_{10}\left(\bar x\right)} \bar \sigma_{2,j}\nabla  F_{2,j}\left(\bar x\right) \\
    &&+ \displaystyle\sum\limits_{j \in a_{00}\left(\bar x\right)} \left(\bar \varrho_{1,j} \nabla  F_{1,j}\left(\bar x\right)+\bar \varrho_{2,j}\nabla  F_{2,j}\left(\bar x\right)\right), \end{array}
\end{equation}
\begin{equation}
   \label{eq:cstat-2} \bar \varrho_{1,j} \cdot \bar \varrho_{2,j}\ge 0 \mbox{ for all } j \in a_{00}\left(\bar x\right).
\end{equation}
Moreover, the C-stationary point $\bar x$ is called:
\begin{itemize}
\item M-stationary if 
\begin{equation}
       \label{eq:mstat-2} \bar \varrho_{1,j} > 0, \bar \varrho_{2,j} > 0 \mbox{ or } \bar \varrho_{1,j} \cdot \bar \varrho_{2,j} = 0\mbox{ for all } j \in a_{00}\left(\bar x\right);
\end{equation}
    \item S-stationary if 
\begin{equation}
       \label{eq:sstat-2} \bar \varrho_{1,j} \ge 0, \bar \varrho_{2,j}\ge 0 \mbox{ for all } j \in a_{00}\left(\bar x\right).
\end{equation}
\end{itemize}
\end{definition}
It is clear that 
C-, M-, and S-stationarity become tighter in a row.
Moreover, S-stationarity turns out to be necessary for optimality. If $\bar x \in M$ is a local minimizer of MPCC satisfying MPCC-LICQ, then it is S-stationary, see \cite{scheel:2000}.
For a C-stationary point $\bar x \in M$ with multipliers $(\bar \sigma, \bar \varrho)$ -- which are unique under MPCC-LICQ -- it is convenient to define the appropriate Lagrange function as
\[
 \begin{array}{rcl}
\displaystyle L(x)&=&\displaystyle f(x)-\sum\limits_{j \in a_{01}\left(\bar x\right)} \bar \sigma_{1,j} F_{1,j}\left( x\right)
    - \sum\limits_{j \in a_{10}\left(\bar x\right)} \bar \sigma_{2,j} F_{2,j}\left( x\right) \\ &&- \displaystyle\sum\limits_{j \in a_{00}\left(\bar x\right)} \left(\bar \varrho_{1,j}  F_{1,j}\left(x\right)+\bar \varrho_{2,j} F_{2,j}\left(x\right)\right).
\end{array}
\]
%
%
The corresponding tangent space is given by
\[
\mathcal{T}_{\bar x}=\left\{\xi \in \R^n \,\left\vert \,
\begin{array}{l}
\nabla^T F_{1,j}\left(\bar x\right)\xi=0, j \in a_{01}\left(\bar x\right) \cup  a_{00}\left(\bar x\right), \\
\nabla^T F_{2,j}\left(\bar x\right)\xi=0, j \in a_{10}\left(\bar x\right) \cup  a_{00}\left(\bar x\right)
\end{array}
\right.\right\}.
\]

Now, we turn our attention to the nondegeneracy notion for C-stationary points.

\begin{definition}[Nondegenerate C-stationarity for MPCC]
    A C-stationary point $\bar x \in M$ of MPCC with multipliers $(\bar \sigma, \bar \varrho)$ is called nondegenerate if

MPCC-ND1: MPCC-LICQ holds at $\bar x$;

MPCC-ND2: the multipliers for biactive constraints do not vanish, i.e.~$\bar\varrho_{1,j}\cdot \bar\varrho_{2,j}> 0$,\,$j\in a_{00}\left(\bar x\right)$;
    
MPCC-ND3: the restricted Hessian matrix $D^2 L(\bar x)\restriction_{\mathcal{T}_{\bar x}}$ is nonsingular.

\noindent
For a nondegenerate C-stationary point we eventually use the following additional condition:

MPCC-ND4: $\bar \sigma_{1,j}\ne 0$ for all $j \in a_{01}\left(\bar x\right)$ and $\bar \sigma_{2,j}\ne 0$ for all $j\in a_{10}\left(\bar x\right)$.
\end{definition}
%
%
We note that nondegeneracy of C-stationary points is a generic property too.  
The subset of MPCC defining functions $f, F_1, F_2$, for which each
C-stationary point of a corresponding MPCC is nondegenerate, i.e.~fulfills MPCC-ND1, MPCC-ND2, and MPCC-ND3, is $C^2_s$-open and -dense, see \cite{jongen:2009}. Condition MPCC-ND4 has been introduced while studying stability of the Scholtes regularization for MPCC, see \cite{laemmel:anomalies}. There, MPCC-ND4 has been also shown to hold generically at all nondegenerate C-stationary points of MPCC.
For a nondegenerate C-stationary point its C-index is known to become a crucial invariant, cf. \cite{ralph:2011}. 

\begin{definition}[C-index for MPCC]
    Let $\bar x \in M$ be a nondegenerate C-stationary point of MPCC with unique multipliers $\left(\bar \sigma,\bar \varrho\right)$. The number of negative eigenvalues of the matrix $D^2 L(\bar x)\restriction_{\mathcal{T}_{\bar x}}$ is called its quadratic index ($\mbox{MPCC-}QI$). The number of negative pairs $\bar \varrho_{1,j}, \bar \varrho_{2,j}, j\in a_{00}\left(\bar x\right)$ with $\bar \varrho_{1,j}, \bar \varrho_{2,j} <0 $ is called the biactive index ($\mbox{MPCC-}BI$) of $\bar x$. We define the C-index  as the sum of both, i.e.~$\mbox{MPCC-}CI=\mbox{MPCC-}QI+\mbox{MPCC-}BI$.
\end{definition}
We emphasize that C-stationary points are topologically relevant in the sense of Morse theory, see \cite{jongen:2009}. It is to say that they adequately describe the topological changes of lower level sets of MPCC while their levels rise. First, outside the set of C-stationary points, the topology of the MPCC lower level sets remains unchanged, i.e.~the bigger level set is homeomorhic to the smaller one. Second, if passing a nondegenerate C-stationary point, a cell is attached along the boundary of the smaller lower level set to get the bigger one up to the homotopy equivalence. The dimension of the cell to be attached corresponds to the C-index of the nondegenerate C-stationary point under consideration. Additionally, the C-index also encodes  the local structure of MPCC in the vicinity of a nondegenerate C-stationary point. Nondegenerate C-stationary points with C-index equal to zero are local minimizers of MPCC. For nonvanishing C-indices we obtain all kinds of saddle points for MPCC. Overall, the C-index uniquely determines the topological type of a nondegenerate C-stationary point.

Finally, we define the following auxiliary index subsets, which depend on the signs of multipliers $(\bar \sigma, \bar \rho)$ corresponding to a C-stationary point $\bar x \in M$:
\[
\begin{array}{ll}
a_{01}^-\left(\bar x\right)=\left\{j \in a_{01}\left(\bar x\right)\,\left\vert\,  \bar \sigma_{1,j} <0 \right.\right\}, & a_{10}^-\left(\bar x\right)=\left\{j \in a_{10}\left(\bar x\right)\,\left\vert\,  \bar \sigma_{2,j} <0 \right.\right\}, \\
a_{01}^0\left(\bar x\right)=\left\{j \in a_{01}\left(\bar x\right)\,\left\vert\,  \bar \sigma_{1,j} =0 \right.\right\}, & a_{10}^0\left(\bar x\right)=\left\{j \in a_{10}\left(\bar x\right)\,\left\vert\,  \bar \sigma_{2,j} =0 \right.\right\},\\
a_{01}^+\left(\bar x\right)=\left\{j \in a_{01}\left(\bar x\right)\,\left\vert\,  \bar \sigma_{1,j} >0 \right.\right\}, & a_{10}^+\left(\bar x\right)=\left\{j \in a_{10}\left(\bar x\right)\,\left\vert\,  \bar \sigma_{2,j} >0 \right.\right\},\\
\end{array}
\]
\[
\begin{array}{l}
a_{00}^-\left(\bar x\right)=\left\{j \in a_{00}\left(\bar x\right)\,\left\vert\,  \bar \varrho_{1,j},\bar \varrho_{2,j} <0 \right.\right\},\\
a_{00}^0\left(\bar x\right)=\left\{j \in a_{00}\left(\bar x\right)\,\left\vert\,  \bar \varrho_{1,j} \cdot \bar \varrho_{2,j} =0 \right.\right\},
\\
a_{00}^+\left(\bar x\right)=\left\{j \in a_{00}\left(\bar x\right)\,\left\vert\,  \bar \varrho_{1,j},\bar \varrho_{2,j} >0 \right.\right\}.
\end{array}
\]


\subsection{Preliminaries on disjunctive optimization problems}\label{sec:pre-disj}

In what follows, we briefly apply the theory of disjunctive optimization from \cite{jongen:1997} to our regularization $\text{D}(t)$. To this end, the index sets of active constraints associated with $x \in M^{\text{D}(t)}$ will be helpful:

\[
\mathcal{H}_{12}(x)=\left\{j\,\left\vert \, F_{1,j}(x)= F_{2,j}(x) = t \right.\right\},
\]
\[
\mathcal{H}_{1}(x)=\left\{j\,\left\vert \, F_{1,j}(x)= t, F_{2,j}(x) > t \right.\right\}, \quad
\mathcal{H}_{2}(x)=\left\{j\,\left\vert \, F_{1,j}(x) > t, F_{2,j}(x) = t \right.\right\},
\]
\[
\mathcal{N}_1(x)=\left\{j\,\left\vert \, F_{1,j}(x)= 0 \right.\right\},\quad \mathcal{N}_2(x)=\left\{j\,\left\vert \, F_{2,j}(x)= 0 \right.\right\}.
\]

We start with the disjunctive-tailored linear independence constraint qualification.

\begin{definition} [DISJ-LICQ]
We say that a feasible point $x \in M^{\text{D}(t)}$ of $\text{D}(t)$ satisfies the linear independence constraint qualification tailored for disjunctive optimization (DISJ-LICQ) if the following vectors are linearly independent:
\[
\nabla F_{1,j}\left(x\right),\,j \in \mathcal{H}_{12}\left(x\right)\cup \mathcal{H}_{1}\left(x\right) \cup \mathcal{N}_1\left(x\right),\quad \nabla F_{2,j}\left(x\right), j \in \mathcal{H}_{12}\left(x\right)\cup \mathcal{H}_{2}\left(x\right) \cup \mathcal{N}_2\left(x\right).
\]    
\end{definition}

It turns out that DISJ-LICQ is inherited for the disjunctive regularization $\text{D}(t)$ if the original MPCC fulfills MPCC-LICQ. 

\begin{theorem}
    [MPCC- vs.~DISJ-LICQ]
\label{thm:LICQ}
Let a feasible point $\bar x \in M$ of MPCC fulfill MPCC-LICQ. Then, DISJ-LICQ holds at all feasible points $x \in M^{\text{D}(t)}$ of $\text{D}(t)$ for all sufficiently small $t$, whenever they are sufficiently close to $\bar x$. 
\end{theorem}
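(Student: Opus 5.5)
The proof is by continuity: the index sets of active constraints for $\text{D}(t)$ near $\bar x$ are controlled by those of MPCC at $\bar x$. The gradients appearing in DISJ-LICQ will then be a subfamily of gradients evaluated at points near $\bar x$ whose index structure is covered by MPCC-LICQ.

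First, since MPCC-LICQ is a linear independence condition on finitely many continuous gradients, it persists in a neighborhood. Concretely, the family
\[
\nabla F_{1,j}(x),\ j \in a_{01}(\bar x)\cup a_{00}(\bar x),\qquad \nabla F_{2,j}(x),\ j \in a_{10}(\bar x)\cup a_{00}(\bar x),
\]
remains linearly independent for all $x$ in some neighborhood $U$ of $\bar x$. This holds because a nonzero minor of the matrix built from these gradients stays nonzero under small perturbations, and $F_1,F_2\in C^2$.

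Second, shrink $U$ and choose $\bar t>0$ so that for all $x\in U$ and $0<t<\bar t$:
\begin{itemize}
\item $F_{1,j}(x)>t$ whenever $j\in a_{10}(\bar x)$;
\item $F_{2,j}(x)>t$ whenever $j\in a_{01}(\bar x)$.
\end{itemize}
This is possible by taking $\bar t<\tfrac12\min\{F_{1,j}(\bar x),F_{2,j}(\bar x)\}$ over the relevant strictly positive values and using continuity. Consequently, for $x\in M^{\text{D}(t)}\cap U$, each of the conditions $F_{1,j}(x)=0$ and $F_{1,j}(x)=t$ forces $j\notin a_{10}(\bar x)$. Since every $j$ lies in exactly one of $a_{01}(\bar x), a_{10}(\bar x), a_{00}(\bar x)$ (because $\bar x\in M$), we obtain
\[
\mathcal{H}_{12}(x)\cup\mathcal{H}_1(x)\cup\mathcal{N}_1(x)\subset a_{01}(\bar x)\cup a_{00}(\bar x),
\]
and symmetrically for index $2$. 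One more point must be checked: for a fixed $j$, the gradient $\nabla F_{1,j}(x)$ is listed at most once. This holds because $\mathcal{H}_{12},\mathcal{H}_1,\mathcal{N}_1$ are pairwise disjoint, as $F_{1,j}(x)$ cannot equal both $0$ and $t>0$. The same applies to $F_{2,j}$.

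Hence the DISJ-LICQ family at $x$ is a subfamily of the linearly independent family from the first step, and is therefore linearly independent. The only step needing care is the second one: a biactive index $j\in a_{00}(\bar x)$ may split as $j\in\mathcal{H}_{12}(x)$, or as $j\in\mathcal{N}_1(x)\cap\mathcal{N}_2(x)$, or as a mixture such as $F_{1,j}(x)=0$ and $F_{2,j}(x)=t$. In every case the contributed gradients are among $\nabla F_{1,j},\nabla F_{2,j}$ with $j\in a_{00}(\bar x)$, each appearing at most once, so no duplication occurs. The argument goes through without further obstacles.
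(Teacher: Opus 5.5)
Your proof is correct and follows the paper's argument. The paper likewise shows $\mathcal{H}_{12}(x)\cup\mathcal{H}_1(x)\cup\mathcal{N}_1(x)\subset a_{01}(\bar x)\cup a_{00}(\bar x)$ (and symmetrically for index $2$), because $F_{1,j}(x)\le t$ is incompatible with $j\in a_{10}(\bar x)$ for small $t$, and then concludes via MPCC-LICQ and continuity of the gradients; your version, which first fixes the full MPCC-LICQ family on a neighborhood and then passes to subfamilies, just makes explicit the uniformity in $t$ and $x$ that the paper handles along sequences $x^t\to\bar x$.
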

\begin{proof}
We show $a_{10}\left(\bar x\right) \cap \left(\mathcal{H}_{12}\xt\cup\mathcal{H}_{1}\xt  \cup \mathcal{N}_{1}\xt\right)=\emptyset$ for all $t$ sufficiently small.
In fact, suppose $j \in \mathcal{H}_{12}\xt\cup\mathcal{H}_{1}\xt  \cup\mathcal{N}_{1}\xt$ along some sequence of $x^t \in M^{\text{D}(t)}$ with $x^t \to \bar x$ for $t \to 0$. Then $F_{1,j}\xt\le t$ and thus converging to $0$ for $t \to 0$. Therefore, $j \notin a_{10}\left(\bar x\right)$ for all $t$ sufficiently small.
Hence, $\mathcal{H}_{12}\xt\cup\mathcal{H}_{1}\xt \cup \mathcal{N}_{1}\xt \subset a_{01}\left(\bar x\right) \cup a_{00}\left(\bar x\right)$. With similar arguments, we deduce $\mathcal{H}_{12}\xt\cup\mathcal{H}_{2}\xt  \cup \mathcal{N}_{2}\xt \subset a_{10}\left(\bar x\right) \cup a_{00}\left(\bar x\right)$. Therefore, MPCC-LICQ implies that the vectors
\[
\nabla F_{1,j}\left(\bar x\right), j \in \mathcal{H}_{12}\xt\cup\mathcal{H}_{1}\xt  \cup \mathcal{N}_{1}\xt, \quad
\nabla F_{2,j}\left(\bar x\right), j \in \mathcal{H}_{12}\xt\cup\mathcal{H}_{2}\xt  \cup \mathcal{N}_{2}\xt
\]
are linearly independent for all $t$ sufficiently small, and $x^t$ sufficiently close to $\bar x$. DISJ-LICQ follows directly from the continuity of $\nabla F_1$ and  $\nabla F_2$. 
    \qed
\end{proof}

Various stationarity notions for disjunctive optimization problems are given in the following definition. The topologically relevant notion of stationarity has been identified in \cite{jongen:1997}. We shall refer to it as C-stationarity due to its possible derivation by means of the Clarke's normal cone, cf. \cite{shikhman:2025}. M- and S-stationarity has been defined for the disjunctive optimization in \cite{flegel:2007}. 

\begin{definition}[Stationarity for disjunctive optimization]
    \label{def:disj-c-stat}
  A feasible point $x \in M^{\text{D}(t)}$ of $\text{D}(t)$ is called C-stationary if there exist multipliers 
\[
\zeta_{1,j},\zeta_{2,j}, j \in \mathcal{H}_{12}\left(x\right), \quad \eta_{1,j}, j \in \mathcal{H}_{1}\left(x\right), \quad \eta_{2,j}, j \in \mathcal{H}_{2}\left(x\right), \quad \nu_{1,j}, j \in \mathcal{N}_{1}\left(x\right), \quad \nu_{2,j}, j \in \mathcal{N}_{2}\left(x\right),
\]
such that
\begin{equation}
   \label{eq:disj-c-stat-1} 
   \begin{array}{rcl}
\displaystyle
\nabla f\left( x\right) &=& 
-\displaystyle\sum\limits_{j\in \mathcal{H}_{12}(x)}
\left(\zeta_{1,j} \nabla F_{1,j}\left(x\right) + \zeta_{2,j}\nabla F_{2,j}\left(x\right)\right)\\&&
-\displaystyle\sum\limits_{j\in \mathcal{H}_{1}(x)}
\eta_{1,j} \nabla F_{1,j}\left(x\right)
-\displaystyle\sum\limits_{j\in \mathcal{H}_{2}(x)}
\eta_{2,j} \nabla F_{2,j}\left(x\right)\\&&
+ \displaystyle\sum\limits_{j\in \mathcal{N}_{1}(x)}   \nu_{1,j} \nabla F_{1,j}\left(x\right)
+ \displaystyle\sum\limits_{j\in \mathcal{N}_{2}(x)}  \nu_{2,j} \nabla F_{2,j}\left(x\right).
\end{array}
\end{equation}
\begin{equation}
 \label{eq:disj-c-stat-2}    
 \begin{array}{c}
 \zeta_{1,j}\ge 0,\zeta_{2,j}\ge 0, j \in\mathcal{H}_{12}(x),\\ \\
\eta_{1,j}\ge 0, j \in\mathcal{H}_{1}(x),
\eta_{2,j}\ge 0, j \in\mathcal{H}_{2}(x),
\\ \\ 
\nu_{1,j}\ge 0, j \in\mathcal{N}_{1}(x),
\nu_{2,j}\ge 0, j \in\mathcal{N}_{2}(x).
    \end{array}
\end{equation}
Moreover, the C-stationary point $x$ is called:
\begin{itemize}
\item M-stationary if 
\begin{equation}
       \label{eq:disj-mstat-2}  \zeta_{1,j}\cdot \zeta_{2,j}= 0, j \in\mathcal{H}_{12}(x);
\end{equation}
    \item S-stationary if 
\begin{equation}
       \label{eq:disj-sstat-2}  \zeta_{1,j}=\zeta_{2,j}= 0, j \in\mathcal{H}_{12}(x).
\end{equation}
\end{itemize}
\end{definition}

For a C-stationary point $x \in M^{\text{D}(t)}$ with multipliers $(\zeta,\eta, \nu)$ -- which are unique under DISJ-LICQ -- it is convenient to define the appropriate Lagrange function as
\[
 \begin{array}{rcl}
L^{\text{D}(t)}(x)&=&f(x) -\displaystyle\sum\limits_{j\in \mathcal{H}_{12}(x)}
\left(\zeta_{1,j} \left(t-F_{1,j}\left(x\right)\right) + 
\zeta_{2,j} \left(t-F_{2,j}\left(x\right)\right)\right)\\&&
-\displaystyle\sum\limits_{j\in \mathcal{H}_{1}(x)}
\eta_{1,j} \left(t- F_{1,j}\left(x\right)\right)
-\displaystyle\sum\limits_{j\in \mathcal{H}_{2}(x)}
\eta_{2,j} \left(t- F_{2,j}\left(x\right)\right)\\&&
- \displaystyle\sum\limits_{j\in \mathcal{N}_{1}(x)}   \nu_{1,j} F_{1,j}\left(x\right)
- \displaystyle\sum\limits_{j\in \mathcal{N}_{2}(x)}  \nu_{2,j} F_{2,j}\left(x\right).
\end{array}
\]
The corresponding tangent space is given by
\[
    \mathcal{T}^{\text{D}(t)}_{x}=\left\{
\xi \in \R^{n}\,\left\vert\, \begin{array}{l}
\nabla^T F_{1,j}\left(x\right)\xi=0, j \in \mathcal{H}_{12}\left(x\right)\cup \mathcal{H}_{1}\left(x\right) \cup \mathcal{N}_1\left(x\right),\\
\nabla^T F_{2,j}\left(x\right)\xi=0, j \in \mathcal{H}_{12}\left(x\right)\cup \mathcal{H}_{2}\left(x\right) \cup \mathcal{N}_2\left(x\right)
\end{array}
\right.\right\}.
\]
Let us recall the  definitions of a nondegenerate C-stationary point and its C-index in the context of disjunctive optimization, see \cite{jongen:1997}. Analogously, they turn out to be topologically relevant in the sense of Morse theory.

\begin{definition}[Nondegenerate C-stationarity for disjunctive optimization]
A C-stationary point $x \in M^{\text{D}(t)}$ of $\text{D}(t)$ with multipliers $(\zeta, \eta, \nu)$
is called nondegenerate if 

DISJ-ND1: DISJ-LICQ holds at $x$;

DISJ-ND2: the multipliers for biactive  disjunctive and active inequality constraints do not vanish, i.e.~$\zeta_{1,j}>0$, $\zeta_{2,j}>0$, $j \in \mathcal{H}_{12}(x)$, $\eta_{1,j}>0$, $j \in \mathcal{H}_{1}(x)$, $\eta_{2,j}>0$, $j \in \mathcal{H}_{2}(x)$, $\nu_{1,j} >0$, $j \in \mathcal{N}_1(x)$, and $\nu_{2,j} >0$, $j \in \mathcal{N}_2(x)$;

DISJ-ND3: the restricted Hessian matrix $D^2 L^{\text{D}(t)}(x)\restriction_{\mathcal{T}^{\text{D}(t)}_{x}}$ is nonsingular.
\end{definition}

\begin{definition} [C-index for disjunctive optimization]
    Let $x \in M^{\text{D}(t)}$ be a nondegenerate C-stationary point of $\text{D}(t)$ with unique multipliers $(\zeta,\eta, \nu)$.
The number of negative eigenvalues of the matrix $D^2 L^{\text{D}(t)}(x)\restriction_{\mathcal{T}^{\text{D}(t)}_{x}}$ is called its quadratic index ($\mbox{DISJ-}QI$). The number of nonvanishing biactive disjunctive multipliers $ \zeta_{1,j}, \zeta_{2,j}, j\in \mathcal{H}_{12}(x)$ with $\zeta_{1,j}\not =0, \zeta_{2,j} \not =0$ is called the biactive index ($\mbox{DISJ-}BI$) of $x$. We define the C-index as the sum of both, i.e.~$\mbox{DISJ-}CI=\mbox{DISJ-}QI+\mbox{DISJ-}BI$.
\end{definition}


\section{Main results}
\label{sec:main}


First, let us study the limiting behavior of C- and M-stationary points of the disjunctive regularization $\text{D}(t)$. Under MPCC-LICQ, they turn out to converge to their counterparts of MPCC.

\begin{theorem}[Convergence for C- and M-stationarity]
\label{thm:t-sequence}
    Suppose a sequence of C- or M-stationary points $x^{t} \in M^{\text{D}(t)}$ of $\text{D}(t)$ with multipliers $\left(\zeta ^t, \eta^t, \nu^t\right)$ converges to $\bar x$ for $t \to 0$. Let MPCC-LICQ be fulfilled at $\bar x \in M$. 
    Then, $\bar x$ is a C- or M-stationary point of MPCC, respectively.
\end{theorem}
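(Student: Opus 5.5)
The plan is to pass to the limit in the stationarity equation \eqref{eq:disj-c-stat-1}, using MPCC-LICQ to keep the multipliers bounded.

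\emph{Step 1: index sets.} As in the proof of Theorem \ref{thm:LICQ}, for all sufficiently small $t$ we have
\[
\mathcal{H}_{12}(x^t)\cup\mathcal{H}_{1}(x^t)\cup\mathcal{N}_{1}(x^t)\subset a_{01}(\bar x)\cup a_{00}(\bar x),
\qquad
\mathcal{H}_{12}(x^t)\cup\mathcal{H}_{2}(x^t)\cup\mathcal{N}_{2}(x^t)\subset a_{10}(\bar x)\cup a_{00}(\bar x).
\]
There are finitely many possible combinations of these index sets, so I pass to a subsequence along which all of them are constant. For each $j$, the sets $\mathcal{H}_{12}(x^t)$, $\mathcal{H}_1(x^t)$ and $\mathcal{N}_1(x^t)$ are pairwise disjoint for $t>0$, because $F_{1,j}$ equals $t$ on the first two and $0$ on the last. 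Hence each $\nabla F_{1,j}$ receives a single coefficient in \eqref{eq:disj-c-stat-1}: $-\zeta^t_{1,j}$, $-\eta^t_{1,j}$, or $\nu^t_{1,j}$. The same holds for $\nabla F_{2,j}$.

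I rewrite \eqref{eq:disj-c-stat-1} as
\[
\nabla f(x^t)=\sum_j \bigl(\alpha^t_{1,j}\nabla F_{1,j}(x^t)+\alpha^t_{2,j}\nabla F_{2,j}(x^t)\bigr),
\]
where $\alpha^t_{i,j}$ is that single coefficient, set to $0$ when no constraint with that index is active.

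\emph{Step 2: boundedness and limit.} Suppose the vectors $\alpha^t$ are unbounded. Dividing by $\|\alpha^t\|$ and passing to a convergent subsequence gives a nontrivial vanishing combination of the gradients listed in MPCC-LICQ at $\bar x$, which is a contradiction. So $\alpha^t$ is bounded, and a subsequence converges to some $\bar\alpha$. Taking the limit yields \eqref{eq:cstat-1}, with the following multipliers:
\[
\bar\sigma_{1,j}=\bar\alpha_{1,j},\ j\in a_{01}(\bar x),\qquad
\bar\sigma_{2,j}=\bar\alpha_{2,j},\ j\in a_{10}(\bar x),\qquad
\bar\varrho_{i,j}=\bar\alpha_{i,j},\ j\in a_{00}(\bar x).
\]
Indices $j\in a_{10}(\bar x)$ carry a zero coefficient for $F_{1,j}$, so they cause no trouble.

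\emph{Step 3: sign conditions.} This is the main step and requires a case analysis for $j\in a_{00}(\bar x)$ according to the constant index pattern. Feasibility gives $\max\{t-F_{1,j},t-F_{2,j}\}\ge 0$, so the two components of $j$ cannot both be "upper-active" in separate sets $\mathcal{H}_1$ and $\mathcal{H}_2$. If they were, we would need $F_{1,j}=t<F_{2,j}$ and $F_{2,j}=t<F_{1,j}$ simultaneously, which is impossible. The remaining cases are:
\begin{itemize}
\item $j\in\mathcal{H}_{12}$: both coefficients are $\le 0$, so $\bar\varrho_{1,j}\bar\varrho_{2,j}\ge 0$.
\item $j\in\mathcal{H}_1$ together with $j\in\mathcal{N}_2$: the coefficients are $-\eta^t_{1,j}\le 0$ and $\nu^t_{2,j}\ge 0$. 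Signs are mixed, and this is the apparent obstacle.
\end{itemize}
The mixed case cannot actually occur, since $j\in\mathcal{H}_1$ requires $F_{2,j}>t>0$, which contradicts $F_{2,j}=0$. So among the genuinely possible patterns, namely $\mathcal{H}_1$ or $\mathcal{H}_2$ with the other component inactive, $\mathcal{N}_1$ and/or $\mathcal{N}_2$, or $\mathcal{H}_{12}$, either one coefficient is zero or both have the same sign. Hence \eqref{eq:cstat-2} holds and $\bar x$ is C-stationary.

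\emph{Step 4: M-stationarity.} In the M-stationary case, \eqref{eq:disj-mstat-2} gives $\zeta^t_{1,j}\zeta^t_{2,j}=0$. So in the $\mathcal{H}_{12}$ case, after a further subsequence, one of the limits vanishes. In every other case the pair is either zero in one component or both nonnegative (the $\mathcal{N}_1\cap\mathcal{N}_2$ case). Therefore \eqref{eq:mstat-2} holds.

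I expect the only delicate point to be confirming exhaustively that no admissible pattern produces opposite signs for a biactive $j$. The argument above, based on the incompatibility of $F_{i,j}=0$ with the other component lying in $\mathcal{H}$, should settle this.
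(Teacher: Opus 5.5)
Your proposal is correct and follows the paper's proof essentially step for step. The paper likewise collapses the multipliers into a single coefficient $\mu^t_{i,j}$ per gradient, uses the index-set inclusions from the proof of Theorem~\ref{thm:LICQ} together with MPCC-LICQ to pass to the limit, rules out opposite signs for $j\in a_{00}(\bar x)$ because $F_{i,j}(x^t)=0$ is incompatible with the other component of $j$ lying in an $\mathcal{H}$-set, and obtains M-stationarity by forcing $j\in\mathcal{N}_1(x^t)\cap\mathcal{N}_2(x^t)$. Your explicit normalization argument for bounding the multipliers and your passage to subsequences with constant index patterns spell out details that the paper only asserts.
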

\begin{proof}
Let us rename the multipliers $(\zeta^t, \eta^t, \nu^t)$ of $x^t$ as follows
\[
\mu^t_{1,j}=\left\{
\begin{array}{ll}
 -\zeta^t_{1,j}    &\mbox{for } j \in \mathcal{H}_{12}\left(x^t\right), \\
 -\eta^t_{1,j}    &\mbox{for } j \in \mathcal{H}_{1}\left(x^t\right), \\
\nu^t_{1,j}    &\mbox{for } j \in \mathcal{N}_{1}\left(x^t\right),\\
0&\mbox{else},
\end{array}
\right.
\qquad
\mu^t_{2,j}=\left\{
\begin{array}{ll}
 -\zeta^t_{2,j}    &\mbox{for } j \in \mathcal{H}_{12}\left(x^t\right), \\
 -\eta^t_{2,j}    &\mbox{for } j \in \mathcal{H}_{2}\left(x^t\right), \\
\nu^t_{2,j}    &\mbox{for } j \in \mathcal{N}_{2}\left(x^t\right),\\
0&\mbox{else}.
\end{array}
\right.
\]
Hence, we get in view of (\ref{eq:disj-c-stat-1})
\[
\nabla f\xt = \displaystyle \sum\limits_{j=1}^{\kappa} \mu_{1,j} \nabla F_{1,j}\xt
+ \sum\limits_{j=1}^{\kappa} \mu_{2,j} \nabla F_{2,j}\xt.
\]
Next, we use that the sets $a_{01}\left(\bar x\right)$, $a_{10}\left(\bar x\right)$, and
$a_{00}\left(\bar x\right)$ are disjoint to obtain
\[
\begin{array}{rcl} 
\nabla f\xt &=& \displaystyle \sum\limits_{j \in a_{01}\left(\bar x\right)} \mu^t_{1,j} \nabla F_{1,j}\xt + \sum\limits_{j \in a_{10}\left(\bar x\right)} \mu^t_{1,j} \nabla F_{1,j}\xt\\
&&\displaystyle +\sum\limits_{j \in a_{01}\left(\bar x\right)} \mu^t_{2,j} \nabla F_{2,j}\xt + \sum\limits_{j \in a_{10}\left(\bar x\right)} \mu^t_{2,j} \nabla F_{2,j}\xt\\
&&\displaystyle +\sum\limits_{j \in a_{00}\left(\bar x\right)} \left(\mu^t_{1,j} \nabla F_{1,j}\xt + \mu^t_{2,j} \nabla F_{2,j}\xt\right).
\end{array}
\]
Let us assume $\mu^t_{1,j}\ne 0$ for $j \in a_{10}\left(\bar x\right)$ for arbitrarily small $t>0$.
Hence, from the definition of $\mu^t_{1,j}$ we get $j \in \mathcal{H}_{12}\left(x^t\right)\cup \mathcal{H}_{1}\left(x^t\right) \cup \mathcal{N}_{1}\left(x^t\right)$. 
However, in the proof of Theorem \ref{thm:LICQ} it was shown that $\mathcal{H}_{12}\xt\cup\mathcal{H}_{1}\xt \cup \mathcal{N}_{1}\xt \subset a_{01}\left(\bar x\right) \cup a_{00}\left(\bar x\right)$, thus, contradicting $j \in a_{10}\left(\bar x\right)$.
Analogously, we derive $\mu^t_{2,j}= 0$ for $j \in a_{01}\left(\bar x\right)$ for all sufficiently small $t >0$.

By renaming the multiplier again as follows
\[
\begin{array}{rcllrcll}
\sigma^t_{1,j}&=&\mu^t_{1,j}    &\mbox{for } j \in a_{01}\left(\bar x\right),&
\sigma^t_{2,j}&=&\mu^t_{2,j}    &\mbox{for } j \in a_{10}\left(\bar x\right),
\\
\varrho^t_{1,j}&=&\mu^t_{1,j}    &\mbox{for } j \in a_{00}\left(\bar x\right),&
\varrho^t_{2,j}&=&\mu^t_{2,j}    &\mbox{for } j \in a_{00}\left(\bar x\right),
\end{array}
\]
we arive at
\[
\begin{array}{rcl} 
\nabla f\xt &=& \displaystyle \sum\limits_{j \in a_{01}\left(\bar x\right)} \sigma^t_{1,j} \nabla F_{1,j}\xt 
+ \sum\limits_{j \in a_{10}\left(\bar x\right)} \sigma^t_{2,j} \nabla F_{2,j}\xt\\
&&\displaystyle +\sum\limits_{j \in a_{00}\left(\bar x\right)} \left(\varrho^t_{1,j} \nabla F_{1,j}\xt + \varrho^t_{2,j} \nabla F_{2,j}\xt\right).
\end{array}
\]
The multipliers $\left(\sigma^t,\varrho^t\right)$ converge to a vector $\left(\bar \sigma, \bar \varrho\right)$ for $t \to 0$ in view of MPCC-LICQ at $\bar x$. Thus, $\bar x$ together with $(\bar \sigma, \bar \varrho)$ fulfills (\ref{eq:cstat-1}).

For the C-stationarity part, we claim that $\bar \varrho_{1,j} \cdot \bar \varrho_{2,j} \ge 0$ for all $j \in a_{00}\left(\bar x\right)$ holds. To prove this, we show $\mu^t_{1,j}\cdot \mu^t_{2,j}\ge 0$ for all $j \in a_{00}\left(\bar x\right)$.
It follows from the definition of $\mu^t$ and (\ref{eq:disj-c-stat-2}) that a possible violation implies the existence of $j \in a_{00}\left(\bar x\right)$ with $j \in \mathcal{N}_{1}\left(x^t\right)\cap \left(\mathcal{H}_{12}\left(x^t\right) \cup \mathcal{H}_{2}\left(x^t\right)\right)$ or
$j \in \mathcal{N}_{2}\left(x^t\right)\cap \left(\mathcal{H}_{12}\left(x^t\right) \cup \mathcal{H}_{1}\left(x^t\right)\right)$.
First, we assume $j \in \mathcal{N}_{1}\left(x^t\right)\cap \left(\mathcal{H}_{12}\left(x^t\right) \cup \mathcal{H}_{2}\left(x^t\right)\right)$.
Thus, $F_{1,j}\xt=0$ and $F_{1,j}\xt \ge t$, a contradiction. Analogously,
$j \in \mathcal{N}_{1}\left(x^t\right)\cap \left(\mathcal{H}_{12}\left(x^t\right) \cup \mathcal{H}_{2}\left(x^t\right)\right)$ leads to a contradiction as well.

Let us now assume that $x^t$ are M-stationary points of $\text{D}(t)$, 
i.e.~(\ref{eq:disj-mstat-2}) additionally holds.
We claim (\ref{eq:mstat-2}) holds. Assume there exists $j \in a_{00}\left(\bar x\right)$ with $\bar \varrho_{1,j} \cdot \bar \varrho_{2,j} \neq 0$. Thus, $\mu^t_{1,j}\cdot \mu^t_{2,j}\ne0$ must hold as well. From the definition of $\mu^t$ and (\ref{eq:disj-mstat-2}) we see $j \in \mathcal{H}_1\xt\cup\mathcal{N}_1\xt$ and $j\in \mathcal{H}_2\xt\cup\mathcal{N}_2\xt$. However, $\mathcal{H}_1\xt\cap \left(\mathcal{H}_2\xt\cup\mathcal{N}_2\xt\right)=\emptyset$. Thus, $j\in \mathcal{N}_1\xt$ and likewise $j\in \mathcal{N}_2\xt$. Using the aforementioned and (\ref{eq:disj-c-stat-2}) we get $\mu^t_{1,j}>0$ and $\mu^t_{2,j}>0$. Taking the limit and recalling 
$\bar \varrho_{1,j} \cdot \bar \varrho_{2,j} \neq 0$, we get  
$\bar \varrho_{1,j} >0$ and $ \bar \varrho_{2,j} > 0$. Thus, $\bar x$ is an M-stationary point of MPCC.
    \qed
\end{proof}

Unfortunately, without additional assumptions the convergence to S-stationary points fails in general. This becomes clear from Example \ref{ex:mintosaddle}.

\begin{example}[Convergence of S-stationary points fails]
\label{ex:mintosaddle}
We consider the following disjunctive regularization with $n=4$ and $\kappa=2$:
\[
\begin{array}{rrl}
\text{D}(t):&\min\limits_{x}& -x_1-x_1x_2+\frac{1}{2}x_3^2+(x_4-1)^2\\
&\mbox{s.\,t.} &\max\left\{t-x_1,t-x_2\right\} \ge 0, x_1\ge 0, x_2\ge 0,\\
&&\max\left\{t-(x_2-x_3),t-x_4\right\} \ge 0, x_2-x_3\ge 0, x_4\ge 0.
\end{array}
\]
The point $x^t=(t,2t,t,1)$ is an S-stationary point of $\text{D}(t)$ for $t<1$. Indeed, it holds
\[
\begin{pmatrix}
    -1-2t\\
    -t\\
    t\\
    0
\end{pmatrix}
=
-\eta^t_{1,1}
\begin{pmatrix}
    1\\
    0\\
    0\\
    0
\end{pmatrix}
-\eta^t_{1,2}
\begin{pmatrix}
    0\\
    1\\
    -1\\
    0
\end{pmatrix}
\]
with the unique multipliers $\eta^t_{1,1}=1+2t$ and $\eta^t_{1,2}=t$.
Note, that $x^t$ is in fact a nondegenerate minimizer of $\text{D}(t)$. Clearly, DISJ-ND1 and DISJ-ND2 are fulfilled. Let us check DISJ-ND3. We have for its Hessian
\[
D^2L^{\text{D}(t)}\xt=\begin{pmatrix}
    0&-1&0&0\\
    -1&0&0&0\\
    0&0&1&0\\
    0&0&0&2
\end{pmatrix}.
\]
For its tangent space we have
\[
\mathcal{T}_{x^t}^{\text{D}(t)}=\left\{\xi \in \R^4\,\left\vert\,\xi_1=0, \xi_2=\xi_3\right.\right\}.
\]
Hence for $\xi \in \mathcal{T}_{x^t}^{\text{D}(t)}$ with $\xi\ne 0$ it holds
\[
\xi^T D^2L^{\text{D}(t)}\xi=-2\xi_1\xi_2+\xi_3^2+2\xi_4^2>0.
\]
Moreover, for the C-index of $x^t$ we also get $\mbox{MPCC-}CI=0$, hence, it is indeed a minimizer.

Let us consider the underlying MPCC:
\[
\begin{array}{rrl}
\mbox{MPCC}:&\min\limits_{x}& -x_1-x_1x_2+\frac{1}{2}x_3^2+(x_4-1)^2\\
&\mbox{s.\,t.} &x_1\cdot x_2= 0, x_1\ge 0, x_2\ge 0,\\
&&(x_2-x_3)\cdot x_4= 0, x_2-x_3\ge 0, x_4\ge 0.
\end{array}
\]
together with $\bar x=(0,0,0,1)$, i.e.~the limiting point of $x^t$ for $t \to 0$.
Since MPCC-LICQ is fulfilled at $\bar x$, it is an M-stationary point of MPCC
according to Theorem \ref{thm:t-sequence}. Indeed, it holds
\[
\begin{pmatrix}
    -1\\
    0\\
    0\\
    0
\end{pmatrix}
=
\bar \sigma_{1,2}
\begin{pmatrix}
    0\\
    1\\
    -1\\
    0
\end{pmatrix}
+
\bar \varrho_{1,1}
\begin{pmatrix}
    1\\
    0\\
    0\\
    0
\end{pmatrix}
+
\bar \varrho_{2,1}
\begin{pmatrix}
    0\\
    1\\
    0\\
    0
\end{pmatrix}
\]
with the unique multipliers
$\bar \sigma_{1,2}=0$, $\bar \varrho_{1,1}=-1$, and $\bar \varrho_{2,1}=0$. However, $\bar x$ is not S-stationary and, thus, also not a minimizer. In fact, it holds
$f\left(x_{\varepsilon}\right)<f\left(\bar x\right)$ for $x_{\varepsilon}=(\varepsilon,0,0,1), \varepsilon>0$.
Moreover, $\bar x$ is degenerate by violating MPCC-ND2 as an C-stationary point of MPCC, more precisely it is a saddle point.
\qed
\end{example}

As an additional generic assumption, which guarantees the convergence to S-stationary points, MPCC-ND2 can be taken. 

\begin{corollary}
    [Convergence for S-stationarity]
\label{cor:s-sequence}
    Suppose a sequence of M-stationary points $x^{t} \in M^{\text{D}(t)}$ of $\text{D}(t)$ with multipliers $\left(\zeta ^t, \eta^t, \nu^t\right)$ converges to $\bar x$ for $t \to 0$. Let MPCC-LICQ and MPCC-ND2 be fulfilled at $\bar x \in M$. 
    Then, $\bar x$ is an S-stationary point of MPCC.
\end{corollary}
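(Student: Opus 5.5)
By Theorem \ref{thm:t-sequence}, applied to the M-stationary points $x^t$ under MPCC-LICQ at $\bar x$, the limit $\bar x$ is an M-stationary point of MPCC. Its multipliers $(\bar\sigma,\bar\varrho)$ are the limits of the renamed multipliers $(\sigma^t,\varrho^t)$ constructed in that proof, and they are unique by MPCC-LICQ. It therefore remains to upgrade M- to S-stationarity, i.e.\ to verify (\ref{eq:sstat-2}) for every biactive index $j\in a_{00}(\bar x)$.

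Fix $j \in a_{00}(\bar x)$. By MPCC-ND2 we have $\bar\varrho_{1,j}\cdot\bar\varrho_{2,j}> 0$, so the alternative $\bar\varrho_{1,j}\cdot\bar\varrho_{2,j}=0$ in (\ref{eq:mstat-2}) is excluded. Hence M-stationarity forces $\bar\varrho_{1,j}>0$ and $\bar\varrho_{2,j}>0$. Since this holds for every $j\in a_{00}(\bar x)$, condition (\ref{eq:sstat-2}) follows and $\bar x$ is S-stationary.

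There is an equivalent direct route through the last part of the proof of Theorem \ref{thm:t-sequence}. Nonvanishing of both limits implies $\mu^t_{1,j}\mu^t_{2,j}\neq 0$ for small $t$. Together with (\ref{eq:disj-mstat-2}), this rules out $j\in\mathcal{H}_{12}(x^t)$ and the mixed memberships, which leaves $j\in\mathcal{N}_1(x^t)\cap\mathcal{N}_2(x^t)$. There both multipliers are $\nu$'s and hence nonnegative, which again gives positive limits.

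There is no real obstacle here. The only point needing care is that the multipliers appearing in MPCC-ND2 are the same ones obtained as limits in Theorem \ref{thm:t-sequence}, and this is guaranteed by their uniqueness under MPCC-LICQ.
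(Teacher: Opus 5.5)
Your proposal is correct and follows essentially the same argument as the paper: apply Theorem~\ref{thm:t-sequence} to get M-stationarity of $\bar x$, then note that MPCC-ND2 excludes the alternative $\bar\varrho_{1,j}\cdot\bar\varrho_{2,j}=0$ in (\ref{eq:mstat-2}), which forces $\bar\varrho_{1,j},\bar\varrho_{2,j}>0$. Your alternative direct route via $j\in\mathcal{N}_1(x^t)\cap\mathcal{N}_2(x^t)$ also checks out, but it just replays the M-stationarity part of the proof of Theorem~\ref{thm:t-sequence}.
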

\begin{proof}
 In view of Theorem \ref{thm:t-sequence}, it is only left to show that (\ref{eq:sstat-2}) holds at $\bar x$.
 However, we know that (\ref{eq:mstat-2}) is valid. Due to MPCC-ND2, we have
 $\bar \varrho_{1,j}\cdot \bar \varrho_{2,j} \ne 0$. Thus, the assertion follows.

    \qed
\end{proof}

We point out that condition MPCC-ND2 in Corollary \ref{cor:s-sequence} can be weakened. In Proposition \ref{thm:s-statls}, other sufficient conditions in terms of the approaching sequence are formulated. 

\begin{proposition}[Finer convergence for S-stationarity]
\label{thm:s-statls}
        Suppose a sequence of S-stationary points $x^{t} \in M^{\text{D}(t)}$ of $\text{D}(t)$ with multipliers $\left(\zeta ^t, \eta^t, \nu^t\right)$ converges to $\bar x$ for $t \to 0$. Let MPCC-LICQ  be fulfilled at $\bar x \in M$. Additionally, assume that for all sufficiently small $t$ it holds:
        \begin{equation}
    \label{eq:conditionls}
    \begin{array}{lll}
        \lim\limits_{t\to 0}\eta_{1,j}^t=0 & \text{ for all } j \in \mathcal{H}_{1}\xt \cap a^0_{00}\left(\bar x\right),&\\
                 \lim\limits_{t\to 0}\eta_{2,j}^t=0 & \text{ for all } j \in \mathcal{H}_{2}\xt \cap a^0_{00}\left(\bar x\right).&
        \end{array}
\end{equation}
    Then, $\bar x$ is an S-stationary point of MPCC. Moreover, (\ref{eq:conditionls}) is trivially satisfied if 
            \begin{equation}
    \label{eq:conditionls2}
    \left(\mathcal{H}_{1}\xt \cup \mathcal{H}_{2}\xt\right) \cap a^0_{00}\left(\bar x\right)=\emptyset.
\end{equation}
Further, condition (\ref{eq:conditionls2}) itself holds under MPCC-ND2. 
    \end{proposition}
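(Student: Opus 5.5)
Since S-stationary points are M-stationary, Theorem \ref{thm:t-sequence} applies directly. It shows that $\bar x$ is M-stationary for MPCC, with multipliers $(\bar\sigma,\bar\varrho)$ obtained as limits of the renamed multipliers $(\sigma^t,\varrho^t)$ from its proof. It therefore only remains to check (\ref{eq:sstat-2}), i.e.~$\bar\varrho_{1,j}\ge 0$ and $\bar\varrho_{2,j}\ge 0$ for all $j\in a_{00}(\bar x)$. By M-stationarity, the indices in $a^+_{00}(\bar x)$ are fine, and $a^-_{00}(\bar x)$ is empty. So only $j\in a^0_{00}(\bar x)$ needs attention, and there we must show that the single possibly nonzero multiplier is not negative.

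Fix $j\in a^0_{00}(\bar x)$ and look at where $j$ lies for $x^t$. Recall $\varrho^t_{1,j}=\mu^t_{1,j}$, which equals $-\zeta^t_{1,j}$ on $\mathcal H_{12}(x^t)$, $-\eta^t_{1,j}$ on $\mathcal H_1(x^t)$, $\nu^t_{1,j}\ge 0$ on $\mathcal N_1(x^t)$, and $0$ otherwise. S-stationarity (\ref{eq:disj-sstat-2}) gives $\zeta^t_{1,j}=0$, so the only negative contribution can come from $j\in\mathcal H_1(x^t)$. Passing to subsequences, we can split into the cases where $j\in\mathcal H_1(x^t)$ holds infinitely often or not. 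In the first case, condition (\ref{eq:conditionls}) gives $\varrho^t_{1,j}=-\eta^t_{1,j}\to 0$, so $\bar\varrho_{1,j}=0$. In the second case, $\varrho^t_{1,j}\ge 0$ eventually, so $\bar\varrho_{1,j}\ge 0$. Uniqueness of the limit multipliers under MPCC-LICQ makes the subsequence argument legitimate. The index $2$ is handled symmetrically.

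The remaining claims are straightforward. If (\ref{eq:conditionls2}) holds, the index sets in (\ref{eq:conditionls}) are empty, so the condition is vacuous. Under MPCC-ND2 we have $a^0_{00}(\bar x)=\emptyset$, which gives (\ref{eq:conditionls2}) immediately. The step requiring some care is the sign bookkeeping for $\mu^t$ together with the subsequence argument. The mild subtlety is that the hypothesis concerns indices in $\mathcal H_1(x^t)$ for varying $t$, and this is exactly what the case split resolves.
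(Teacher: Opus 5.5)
Your proposal is correct and follows essentially the paper's argument. You reduce via Theorem~\ref{thm:t-sequence} to indices in $a^0_{00}(\bar x)$, use S-stationarity to remove the $\zeta^t$-contribution to $\mu^t_{1,j}$, and use (\ref{eq:conditionls}) to force the $\eta^t$-contribution to zero. The paper argues by contradiction instead (assuming $\bar\varrho_{1,j}<0$ forces $j\in\mathcal{H}_1(x^t)$ for all small $t$), whereas your direct subsequence case split is an equally valid and slightly more explicit way to handle the $t$-dependence of the index set.
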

\begin{proof}
By virtue of Theorem \ref{thm:t-sequence}, we have that $\bar x$ is an M-stationary point of MPCC. 
Suppose for some $j \in a_{00}\left(\bar x\right)$ it holds
$\bar \varrho_{1,j}\cdot \bar \varrho_{2,j}=0$. As by definition we have $j \in a^0_{00}\left(\bar x\right)$. Hence, at least one of the multipliers $\bar \varrho_{1,j},\bar \varrho_{2,j}$ vanishes. 
Clearly, $\bar x$ is an S-stationary point for MPCC whenever the potentially nonvanishing multiplier is positive. We consider the remaining cases. First, let $\bar \varrho_{1,j} <0, \bar \varrho_{2,j}=0$. This yields $\mu^t_{1,j}<0$, where $\mu^t$ is defined as in the proof of Theorem \ref{thm:t-sequence}. This implies $j \in \mathcal{H}_{12}\xt \cup \mathcal{H}_1\xt$. However, if it holds $j \in \mathcal{H}_{12}\xt$, then from the proof of Theorem \ref{thm:t-sequence} and due to the S-stationarity of $x^t$:
\[
\bar \varrho_{1,j}=-\lim\limits_{t \to 0}\zeta^t_{1,j}=0,
\] 
which contradicts $\bar \varrho_{1,j} <0$. Therefore, $j \in\mathcal{H}_1\xt$. But then, from 
condition (\ref{eq:conditionls}) we derive
\[
\bar \varrho_{1,j}=-\lim\limits_{t \to 0}\eta^t_{1,j}=0,
\] 
a contradiction to $\bar \varrho_{1,j} <0$ again.
We conclude that $\bar \varrho_{1,j} \ge 0$.
Similarly, we get $\bar \varrho_{2,j} \ge 0$. Altogether, $\bar x$ is an S-stationary point of MPCC.
    \qed
\end{proof}


Next Corollaries \ref{cor:activeindex:disj}
and \ref{cor:multipliers:disj} will be needed as the main technical ingredients for the proofs of our convergence and well-posedness results. Corollary \ref{cor:activeindex:disj} 
relates the active index sets of $\text{D}(t)$ and MPCC. Corollary \ref{cor:multipliers:disj} relates the multipliers of the corresponding C-stationary points. The latter follows straightforwardly from Corollary \ref{cor:activeindex:disj} and the proof of Theorem \ref{thm:t-sequence}.

\begin{corollary}[Active index sets]
\label{cor:activeindex:disj}
Under the assumptions of Theorem \ref{thm:t-sequence} for C-stationarity, we have for the active index subsets of the multipliers $(\bar \sigma, \bar \varrho)$ of $\bar x$ for all sufficiently small $t$:
\begin{itemize}
    \item[a)]  $a_{01}^-\left(\bar x\right)\subset \mathcal{H}_{1}\left(x^t\right)$,
    \item[b)] $a_{01}^+\left(\bar x\right)\subset \mathcal{N}_{1}\left(x^t\right)\backslash \mathcal{N}_{2}\left(x^t\right)$ ,
    \item[c)]  $a_{10}^-\left(\bar x\right)\subset \mathcal{H}_{2}\left(x^t\right)$,
    \item[d)] $a_{10}^+\left(\bar x\right)\subset \mathcal{N}_{2}\left(x^t\right)\backslash \mathcal{N}_{1}\left(x^t\right)$,
    \item[e)]  $a_{00}^-\left(\bar x\right)\subset \mathcal{H}_{12}\left(x^t\right)$,
    \item[f)] $a_{00}^+\left(\bar x\right)\subset \mathcal{N}_{1}\left(x^t\right)\cap \mathcal{N}_{2}\left(x^t\right)$.
\end{itemize}
If additionally $\bar x$ 
fulfills MPCC-ND2, then inclusions e) and f) become equalities.
If additionally $\bar x$ 
fulfills MPCC-ND2 and MPCC-ND4,
then also inclusions a)--d) become equalities.
\end{corollary}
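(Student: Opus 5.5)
The plan is to reuse the renamed multipliers $\mu^t_{1,j},\mu^t_{2,j}$ from the proof of Theorem \ref{thm:t-sequence}. Under MPCC-LICQ they converge to $(\bar\sigma,\bar\varrho)$: $\mu^t_{1,j}\to\bar\sigma_{1,j}$ for $j\in a_{01}(\bar x)$, $\mu^t_{2,j}\to\bar\sigma_{2,j}$ for $j\in a_{10}(\bar x)$, and $\mu^t_{i,j}\to\bar\varrho_{i,j}$ for $j\in a_{00}(\bar x)$. The only further ingredient is the sign pattern built into the definition of $\mu^t$ together with (\ref{eq:disj-c-stat-2}):
\begin{itemize}
\item $\mu^t_{1,j}<0$ forces $j\in\mathcal{H}_{12}(x^t)\cup\mathcal{H}_1(x^t)$;
\item $\mu^t_{1,j}>0$ forces $j\in\mathcal{N}_1(x^t)$;
\item $\mu^t_{1,j}\neq 0$ forces $F_{1,j}(x^t)\in\{0,t\}$;
\end{itemize}
and symmetrically for index $2$. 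A strict sign of a limiting multiplier then transfers to $\mu^t$ for all sufficiently small $t$, which pins down the active sets.

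For a), take $j\in a_{01}^-(\bar x)$. Then $\mu^t_{1,j}<0$ for small $t$, so $j\in\mathcal{H}_{12}(x^t)\cup\mathcal{H}_1(x^t)$. Since $F_{2,j}(\bar x)>0$, we have $F_{2,j}(x^t)>t$ for small $t$, which excludes $\mathcal{H}_{12}$. Hence $j\in\mathcal{H}_1(x^t)$.

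For b), $\mu^t_{1,j}>0$ gives $j\in\mathcal{N}_1(x^t)$, and again $F_{2,j}(x^t)>0$ gives $j\notin\mathcal{N}_2(x^t)$. Parts c) and d) follow symmetrically.

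For e), both $\mu^t_{1,j}<0$ and $\mu^t_{2,j}<0$, so $j$ lies in $\mathcal{H}_{12}\cup\mathcal{H}_1$ and in $\mathcal{H}_{12}\cup\mathcal{H}_2$. Since $\mathcal{H}_1\cap\mathcal{H}_2=\emptyset$ and $\mathcal{H}_1\cap\mathcal{H}_{12}=\emptyset$, we get $j\in\mathcal{H}_{12}(x^t)$.

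For f), both multipliers are positive, so $j\in\mathcal{N}_1(x^t)\cap\mathcal{N}_2(x^t)$.

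For the reverse inclusions under MPCC-ND2, note that ND2 gives $a_{00}(\bar x)=a_{00}^-(\bar x)\cup a_{00}^+(\bar x)$. Take $j\in\mathcal{H}_{12}(x^t)$. The proof of Theorem \ref{thm:LICQ} gives $j\in a_{00}(\bar x)$, because $F_{1,j}(x^t),F_{2,j}(x^t)\le t$ excludes $a_{01}$ and $a_{10}$. If $j$ were in $a_{00}^+(\bar x)$, then by f) it would lie in $\mathcal{N}_1(x^t)$, i.e.\ $F_{1,j}(x^t)=0\neq t$, a contradiction. So $j\in a_{00}^-(\bar x)$. The same argument shows $\mathcal{N}_1(x^t)\cap\mathcal{N}_2(x^t)\subset a_{00}^+(\bar x)$, using e) to rule out $a_{00}^-$.

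For a)--d) under MPCC-ND2 and ND4, we need $a_{01}^0=a_{10}^0=\emptyset$, so each $a_{01}$ index is in $a_{01}^-$ or $a_{01}^+$. Take $j\in\mathcal{H}_1(x^t)$. By the argument in Theorem \ref{thm:LICQ}, $j\in a_{01}(\bar x)\cup a_{00}(\bar x)$.
\begin{itemize}
\item If $j\in a_{00}(\bar x)$, then by ND2 and e), f) it lies in $\mathcal{H}_{12}(x^t)$ or in $\mathcal{N}_1(x^t)$, both disjoint from $\mathcal{H}_1(x^t)$. So this case is impossible.
\item Hence $j\in a_{01}(\bar x)=a_{01}^-\cup a_{01}^+$. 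Membership in $a_{01}^+$ would put $j$ in $\mathcal{N}_1(x^t)$ by b), which is incompatible with $F_{1,j}(x^t)=t$. So $j\in a_{01}^-(\bar x)$.
\end{itemize}
For $j\in\mathcal{N}_1(x^t)\setminus\mathcal{N}_2(x^t)$, the index cannot lie in $a_{10}(\bar x)$, as in Theorem \ref{thm:LICQ}. It cannot lie in $a_{00}(\bar x)$: by e) that would place it in $\mathcal{H}_{12}(x^t)$, where $F_{1,j}(x^t)=t\neq 0$, and by f) in $\mathcal{N}_2(x^t)$, which is excluded. It cannot lie in $a_{01}^-(\bar x)$, since a) would force $F_{1,j}(x^t)=t\neq 0$. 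So $j\in a_{01}^+(\bar x)$. Parts c) and d) follow symmetrically.

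I expect the main obstacle to be the bookkeeping of "for all sufficiently small $t$" across finitely many indices, together with the mutual-exclusion facts among $\mathcal{H}_{12},\mathcal{H}_1,\mathcal{H}_2,\mathcal{N}_1,\mathcal{N}_2$ for $t>0$. Both are routine once stated: there are finitely many indices, and the values $0$ and $t$ are distinct.
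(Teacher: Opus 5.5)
Your proposal is correct and follows essentially the paper's argument: you transfer the strict signs of the limiting multipliers to the renamed multipliers $\mu^t$ from the proof of Theorem~\ref{thm:t-sequence}, use continuity of $F_{1,j},F_{2,j}$ to exclude the wrong active sets, and obtain the reverse inclusions for e) and f) under MPCC-ND2 exactly as the paper does. The only cosmetic difference is in the equalities a)--d) under MPCC-ND2 and MPCC-ND4: the paper uses a partition argument (the six sets $a^{\pm}_{01},a^{\pm}_{10},a^{\pm}_{00}$ cover $\{1,\ldots,\kappa\}$ and lie inside six pairwise disjoint active sets of $x^t$), whereas you check each reverse inclusion element by element.
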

\begin{proof}
a) Suppose $j \in a_{01}^-\left(\bar x\right)$. From the proof of Theorem \ref{thm:t-sequence}
it follows from $\bar \sigma_{1,j}<0$ that $j\in \mathcal{H}_{12}\xt \cup \mathcal{H}_1\xt$.
Let us assume that there exists a subsequence $x^{t_k}$ such that $j \in \mathcal{H}_{12}\left(x^{t_k}\right)$. Then, $F_{2,j}\left(x^{t_k}\right)=t_k$ and, thus, $F_{2,j}\left(x^{t_k}\right)\to 0$ for $t_k\to 0$. This, however, contradicts $j \in a_{01}\left(\bar x\right)$.
Hence, $j\in \mathcal{H}_{1}\xt$ for all $t$ sufficiently small. 

b) As in a), from the proof of Theorem \ref{thm:t-sequence} it follows that $j \in a_{01}^+\left(\bar x\right)$ yields $j \in \mathcal{N}_1\xt$. Assume that it also holds
$j \in \mathcal{N}_2\left(x^{t_k}\right)$ along some subsequence $x^{t_k}$. However, we would have then $\lim\limits_{t_k \to 0}F_{2,j}\left(x^{t_k}\right)=0$ and thus 
$j \notin a_{01}^+\left(\bar x\right)$. The assertion follows directly.

c) and d) are proven analogously to a) and b) due to symmetry.

e) Suppose $j \in a_{00}^-\left(\bar x\right)$. From the proof of Theorem \ref{thm:t-sequence}
it follows from $\bar \varrho_{1,j}<0$ that $j\in \mathcal{H}_{12}\xt \cup \mathcal{H}_1\xt$.
Let us assume that $j \in \mathcal{H}_1\xt$. Then, $F_{2,j}\xt>t$ and, thus, $j \notin
\mathcal{H}_{12}\xt \cup \mathcal{H}_2\xt$. However, this contradicts $\bar \varrho_{2,j}<0$, again by the proof of Theorem \ref{thm:t-sequence}.
Hence, $j\in \mathcal{H}_{12}\xt$. 

f) We again refer to the proof of Theorem \ref{thm:t-sequence} to conclude that from
$\varrho_{1,j}>0$ it follows $j \in \mathcal{N}_1\xt$ and from 
$\varrho_{2,j}>0$ it follows $j \in \mathcal{N}_2\xt$.

Let us assume that MPCC-ND2 and MPCC-ND4 hold at $\bar x$.
Then, inclusions in a)--f) become equalities due to the fact that 
$a^-_{01}\cup a^+_{01} \cup a^-_{10}\cup a^+_{10} \cup a^-_{00} \cup a^+_{00}=\{1,\ldots,\kappa\}$, where all sets on the left side are pairwise disjoint, and the sets
$\mathcal{H}_{12}\left(x^t\right)$, 
$\mathcal{H}_{1}\left(x^t\right)$, 
$\mathcal{H}_{2}\left(x^t\right)$, 
$\mathcal{N}_{1}\left(x^t\right)\backslash \mathcal{N}_{2}\left(x^t\right)$, $\mathcal{N}_{2}\left(x^t\right)\backslash \mathcal{N}_{1}\left(x^t\right)$,
$\mathcal{N}_{1}\left(x^t\right)\cap \mathcal{N}_{2}\left(x^t\right)$ 
being pairwise disjoint.

It is left to show that MPCC-ND4 can be omitted to attain equalities in e) and f). 
Let us start with e). Suppose $j \in \mathcal{H}_{12}\xt \backslash a_{00}^-\left(\bar x\right)$. In particular, 
 $F_{1,j}\xt=F_{2,j}\xt=t$, and, hence, by taking $t \to 0$, $j \in a_{00}\left(\bar x\right)$. Due to MPCC-ND2, we actually have $j \in a_{00}^+\left(\bar x\right)$. This contradicts f). The equality in f) can be shown in a similar way.
\qed
\end{proof}

\begin{corollary}[Multipliers]
\label{cor:multipliers:disj}
If additionally $\bar x$ 
fulfills MPCC-ND2 in Corollary \ref{cor:activeindex:disj}, then for the multipliers it holds:
\[
\begin{array}{lcllcll}
\lim\limits_{t \to 0}-\zeta^t_{1,j}&=&
\left\{
\begin{array}{ll}
\bar \varrho_{1,j}& \mbox{for } j \in a^-_{00}\left(\bar x\right),\\
0 & \mbox{else},
\end{array}\right.&
\lim\limits_{t \to 0}-\zeta^t_{2,j}&=&
\left\{
\begin{array}{ll}
\bar \varrho_{2,j}& \mbox{for } j \in a^-_{00}\left(\bar x\right),\\
0 & \mbox{else},
\end{array}\right.&
\\ \\
\lim\limits_{t \to 0}-\eta^t_{1,j}&=&
\left\{
\begin{array}{ll}
\bar \sigma_{1,j}& \mbox{for } j \in a^-_{01}\left(\bar x\right),\\
0 & \mbox{else},
\end{array}\right.&
\lim\limits_{t \to 0}-\eta^t_{2,j}&=&
\left\{
\begin{array}{ll}
\bar \sigma_{2,j}& \mbox{for } j \in a^-_{10}\left(\bar x\right),\\
0 & \mbox{else},
\end{array}\right.\\ \\
\lim\limits_{t \to 0}\nu^t_{1,j}&=&
\left\{
\begin{array}{ll}
\bar \sigma_{1,j}& \mbox{for } j \in a^+_{01}\left(\bar x\right),\\
\bar \varrho_{1,j}& \mbox{for } j \in a^+_{00}\left(\bar x\right),\\
0 & \mbox{else},
\end{array}\right.&
\lim\limits_{t \to 0}\nu^t_{2,j}&=&
\left\{
\begin{array}{ll}
\bar \sigma_{2,j}& \mbox{for } j \in a^+_{10}\left(\bar x\right),\\
\bar \varrho_{2,j}& \mbox{for } j \in a^+_{00}\left(\bar x\right),\\
0 & \mbox{else}.
\end{array}\right.
\end{array}
\]
\end{corollary}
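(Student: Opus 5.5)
The plan is to combine the renaming of multipliers from the proof of Theorem \ref{thm:t-sequence} with the index set relations of Corollary \ref{cor:activeindex:disj}, which hold as equalities for e) and f) under MPCC-ND2. Recall the auxiliary multipliers $\mu^t_{1,j},\mu^t_{2,j}$: $\mu^t_{1,j}$ equals $-\zeta^t_{1,j}$ on $\mathcal{H}_{12}(x^t)$, $-\eta^t_{1,j}$ on $\mathcal{H}_{1}(x^t)$, $\nu^t_{1,j}$ on $\mathcal{N}_{1}(x^t)$, and zero otherwise. These three index sets are pairwise disjoint, since $F_{1,j}(x^t)$ equals $t$ on the first two and $0$ on the third, and $t>0$. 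The proof of Theorem \ref{thm:t-sequence} shows that, after discarding $\mu^t_{1,j}=0$ for $j\in a_{10}(\bar x)$, we have $\mu^t_{1,j}\to\bar\sigma_{1,j}$ for $j\in a_{01}(\bar x)$ and $\mu^t_{1,j}\to\bar\varrho_{1,j}$ for $j\in a_{00}(\bar x)$. Here uniqueness and convergence come from MPCC-LICQ and the continuity of the gradients. So every limit of a $\zeta,\eta,\nu$ multiplier along $t\to 0$ is either the corresponding MPCC multiplier or zero. The only question is which index set $j$ lies in for small $t$.

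I would treat the first component; the second follows by the symmetry $F_1\leftrightarrow F_2$. For $\zeta^t_{1,j}$, by Corollary \ref{cor:activeindex:disj} e) with equality, $\mathcal{H}_{12}(x^t)=a_{00}^-(\bar x)$ for small $t$. Hence for $j\in a^-_{00}(\bar x)$ we get $-\zeta^t_{1,j}=\mu^t_{1,j}\to\bar\varrho_{1,j}$. For all other $j$, the multiplier $\zeta^t_{1,j}$ is not defined (inactive), and we set it to zero by convention, which gives the stated limit. For $\eta^t_{1,j}$, with $j\in\mathcal{H}_1(x^t)$, we have $F_{1,j}(x^t)=t\to0$, so $j\in a_{01}(\bar x)\cup a_{00}(\bar x)$. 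Moreover $j\notin a_{00}^-(\bar x)$ by e) and disjointness, and $j\notin a_{00}^+(\bar x)$ by f) with equality. By MPCC-ND2, $a^0_{00}(\bar x)=\emptyset$, so $j\in a_{01}(\bar x)$ and $-\eta^t_{1,j}=\mu^t_{1,j}\to\bar\sigma_{1,j}$. By a), $a^-_{01}(\bar x)\subset\mathcal{H}_1(x^t)$. For $j\in\mathcal{H}_1(x^t)\cap(a^0_{01}\cup a^+_{01})(\bar x)$, the limit $\bar\sigma_{1,j}$ is $\ge 0$, while $-\eta^t_{1,j}\le0$; also b) places $a^+_{01}(\bar x)$ in $\mathcal{N}_1(x^t)$, which is disjoint from $\mathcal{H}_1(x^t)$. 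So the limit is $0$, matching ``else''.

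For $\nu^t_{1,j}$, with $j\in\mathcal{N}_1(x^t)$, the limit is $\bar\sigma_{1,j}$ if $j\in a_{01}(\bar x)$ and $\bar\varrho_{1,j}$ if $j\in a_{00}(\bar x)$. Since $\nu^t_{1,j}\ge0$, any negative MPCC multiplier is impossible here; indeed a) and e) place those indices in $\mathcal{H}_1$ or $\mathcal{H}_{12}$. Thus the limit is $\bar\sigma_{1,j}$ on $a^+_{01}(\bar x)$ (contained in $\mathcal{N}_1$ by b)), $\bar\varrho_{1,j}$ on $a^+_{00}(\bar x)$ (by f)), and $0$ on $a^0_{01}(\bar x)$. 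Indices not in $\mathcal{N}_1(x^t)$ receive the zero convention.

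The main obstacle is the bookkeeping for indices with $\bar\sigma_{1,j}=0$, where MPCC-ND4 is not assumed, so the index set equalities in a)--d) may fail. Such a $j$ could sit in $\mathcal{H}_1(x^t)$, in $\mathcal{N}_1(x^t)$, or even switch between them along the sequence. I would handle this by noting that in every case the relevant multiplier equals $\pm\mu^t_{1,j}$, and $\mu^t_{1,j}\to\bar\sigma_{1,j}=0$. So the limit is $0$ regardless of switching, consistent with the ``else'' branches. The limits should therefore be read as limits of the extended-by-zero multipliers, which is the convention I would state explicitly.
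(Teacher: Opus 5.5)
Your proposal is correct and follows the paper's route. The paper obtains this corollary directly from the renamed multipliers $\mu^t$ in the proof of Theorem~\ref{thm:t-sequence}, combined with the index-set relations a)--f) of Corollary~\ref{cor:activeindex:disj} (with e) and f) as equalities under MPCC-ND2). Your explicit zero-extension convention, and your treatment of indices in $a^0_{01}(\bar x)$ and $a^0_{10}(\bar x)$ that may switch between $\mathcal{H}_1(x^t)$ (resp.\ $\mathcal{H}_2(x^t)$) and $\mathcal{N}_1(x^t)$ (resp.\ $\mathcal{N}_2(x^t)$), make precise what the paper leaves implicit.
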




Now, we are ready to state our first main result on the convergence of C-stationary points of $\text{D}(t)$ and on how the index of the limiting C-stationary point of MPCC may change.

\begin{theorem}[Convergence]
\label{thm:ttoc}
Suppose a sequence of nondegenerate C-stationary points $x^{t} \in M^{\text{D}(t)}$ of 
$\text{D}(t)$ with quadratic and biactive indices $\mbox{DISJ-}QI$ and $\mbox{DISJ-}BI$, respectively, converges to $\bar x$ for $t \to 0$. If $\bar x \in M$ is a nondegenerate C-stationary point of MPCC with multipliers $(\bar \sigma, \bar \varrho)$, then we have for its quadratic and biactive indices $\mbox{MPCC-}QI$ and $\mbox{MPCC-}BI$, respectively: 
\[
\max\left\{\mbox{DISJ-}QI - s, 0\right\} \le \mbox{MPCC-}QI\le \mbox{DISJ-}QI, \quad \mbox{MPCC-}BI=\mbox{DISJ-}BI,
\]
where
\[
s = \left\vert\left\{j\in a_{01}\left(\bar x\right)\,\left\vert\,\bar \sigma_{1,j}=0\right.\right\}\right\vert + \left\vert\left\{j\in a_{10}\left(\bar x\right)\,\left\vert\,\bar \sigma_{2,j}=0\right.\right\} \right\vert.
\]
Thus, for its C-index we have:
\begin{equation}
        \label{eq:main-index}
    \max\left\{\mbox{DISJ-}CI - s, 0\right\} \le \mbox{MPCC-}CI\le \mbox{DISJ-}CI.
\end{equation}
If additionally MPCC-ND4 holds at $\bar x$, then $\mbox{MPCC-}QI=\mbox{DISJ-}QI$, and also $\mbox{MPCC-}CI=\mbox{DISJ-}CI$.
\end{theorem}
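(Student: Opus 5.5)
The plan rests on Corollary \ref{cor:activeindex:disj} and Corollary \ref{cor:multipliers:disj}, which apply because $\bar x$ is nondegenerate and so satisfies MPCC-LICQ and MPCC-ND2. I treat the biactive index first and the quadratic index second.

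\emph{Biactive part.} By DISJ-ND2, every $j\in\mathcal{H}_{12}(x^t)$ has $\zeta^t_{1,j},\zeta^t_{2,j}>0$. Hence $\mbox{DISJ-}BI=|\mathcal{H}_{12}(x^t)|$. Under MPCC-ND2, Corollary \ref{cor:activeindex:disj} e) holds with equality, i.e.\ $\mathcal{H}_{12}(x^t)=a^-_{00}(\bar x)$ for small $t$. Since $|a^-_{00}(\bar x)|=\mbox{MPCC-}BI$ by definition, we get $\mbox{MPCC-}BI=\mbox{DISJ-}BI$.

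\emph{Tangent spaces.} By Corollary \ref{cor:activeindex:disj} with MPCC-ND2, for small $t$:
\[
\mathcal{H}_{12}\cup\mathcal{H}_1\cup\mathcal{N}_1 = a_{00}\cup a^-_{01}\cup a^+_{01}\cup J_1^t,
\]
where $J_1^t\subset a^0_{01}(\bar x)$ collects those indices of $a^0_{01}$ lying in $\mathcal{H}_1(x^t)\cup\mathcal{N}_1(x^t)$. Indices of $a^0_{01}$ cannot lie in $\mathcal{H}_{12}$, by the argument of part a). The analogous identity holds for index $2$, with $J_2^t\subset a^0_{10}(\bar x)$. Passing to a subsequence, $J_1^t,J_2^t$ are constant, with $|J_1|+|J_2|=:r\le s$. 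So $\mathcal{T}^{\text{D}(t)}_{x^t}$ is cut out by the constraints defining $\mathcal{T}_{\bar x}$ with the indices in $a^0_{01}\cup a^0_{10}$ not in $J$ removed, plus the $J$-constraints, evaluated at $x^t$. By LICQ, $\mathcal{T}^{\text{D}(t)}_{x^t}$ converges to a subspace $\mathcal{T}'\supset\mathcal{T}_{\bar x}$ with $\dim\mathcal{T}'=\dim\mathcal{T}_{\bar x}+(s-r)$. In particular $\mathcal{T}_{\bar x}$ has codimension at most $s$ in $\mathcal{T}'$.

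\emph{Hessians.} By Corollary \ref{cor:multipliers:disj}, the multipliers of $x^t$ converge to those of $\bar x$ under the sign identification of Theorem \ref{thm:t-sequence}. Multipliers for indices in $J$ and in $a^0$ tend to zero. Since $D^2L^{\text{D}(t)}(x^t)=D^2f-\sum\mu^t D^2F$, we get $D^2L^{\text{D}(t)}(x^t)\to D^2L(\bar x)$. Choose bases of $\mathcal{T}^{\text{D}(t)}_{x^t}$ converging to a basis of $\mathcal{T}'$. Then the restricted Hessians converge to $D^2L(\bar x)|_{\mathcal{T}'}$.

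\emph{Comparing indices.} This step is the main obstacle, because $D^2L(\bar x)|_{\mathcal{T}'}$ might be singular, so eigenvalue counts need not pass to the limit. I would use only semicontinuity. Let $p=\mbox{MPCC-}QI$. Then $D^2L(\bar x)$ is positive definite on some $(\dim\mathcal{T}_{\bar x}-p)$-dimensional subspace of $\mathcal{T}_{\bar x}$, by ND3. Restricted to that subspace, the form is uniformly positive definite. Lifting the subspace to $\mathcal{T}^{\text{D}(t)}_{x^t}$ (via projection and convergence), the Hessians there stay positive definite for small $t$. Hence the number of nonpositive eigenvalues, which by DISJ-ND3 is $\mbox{DISJ-}QI$, satisfies
\[
\mbox{DISJ-}QI\le \dim\mathcal{T}'-(\dim\mathcal{T}_{\bar x}-p)=p+(s-r)\le p+s.
\]
Conversely, take the $p$-dimensional negative definite subspace $V\subset\mathcal{T}_{\bar x}\subset\mathcal{T}'$. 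Lifted, it remains negative definite on $\mathcal{T}^{\text{D}(t)}_{x^t}$, which gives $p\le\mbox{DISJ-}QI$. Together these yield the two-sided bound, and adding the biactive equality gives (\ref{eq:main-index}). Under MPCC-ND4, $a^0_{01}=a^0_{10}=\emptyset$, so $s=0$ and equality holds. The delicate points are the transversality that makes the lifting well defined, which I would handle through LICQ and a Gram–Schmidt type continuity argument, and the subsequence extraction for $J^t$.
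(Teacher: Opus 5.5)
Your proof is correct and follows the paper's route. You use $\mathcal{H}_{12}(x^t)=a^-_{00}(\bar x)$ from Corollary \ref{cor:activeindex:disj} for the biactive index, $D^2L^{\text{D}(t)}(x^t)\to D^2L(\bar x)$ via Corollary \ref{cor:multipliers:disj}, and the fact that $\mathcal{T}_{\bar x}$ differs from $\mathcal{T}^{\text{D}(t)}_{x^t}$ by at most $s$ constraints indexed by $a^0_{01}(\bar x)\cup a^0_{10}(\bar x)$. Your limiting subspace $\mathcal{T}'$ and the lifting of definite subspaces are a more careful version of the paper's step, which treats $\mathcal{T}_{\bar x}\subset\mathcal{T}^{\text{D}(t)}_{x^t}$ and the sign equivalence of $\xi^T D^2L^{\text{D}(t)}(x^t)\xi$ and $\xi^T D^2L(\bar x)\xi$ loosely, although the two tangent spaces are built from gradients at different points.
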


\begin{proof}
From Corollary \ref{cor:activeindex:disj} we know that $\mathcal{H}_{12}\xt=a^-_{00}\left(\bar x\right)$ for all $t$ sufficiently small. Therefore, the assertion about the biactive index is directly obtained. For the quadratic index, let us compare the numbers of negative eigenvalues of $D^2 L^{\text{D}(t)}\xt \restriction_{\mathcal{T}^{\text{D}(t)}_{x^t}}$ and $D^2 L(\bar x)\restriction_{\mathcal{T}_{\bar x}}$.
First, we have for the Hessian of the former Lagrange function:
\[
 \begin{array}{rcl}
D^2 L^{\text{D}(t)}\xt&=&D^2f\xt + \displaystyle\sum\limits_{j\in \mathcal{H}_{12}\xt}
\left(\zeta^t_{1,j} D^2F_{1,j}\xt + 
\zeta^t_{2,j} D^2 F_{2,j}\xt\right)\\&&
+\displaystyle\sum\limits_{j\in \mathcal{H}_{1}\xt}
\eta^t_{1,j} D^2 F_{1,j}\xt
+\displaystyle\sum\limits_{j\in \mathcal{H}_{2}\xt}
\eta^t_{2,j} D^2 F_{2,j}\xt\\&&
- \displaystyle\sum\limits_{j\in \mathcal{N}_{1}\xt\backslash \mathcal{N}_{2}\xt}   \nu^t_{1,j} D^2 F_{1,j}\xt
- \displaystyle\sum\limits_{j\in \mathcal{N}_{2}\xt \backslash \mathcal{N}_{1}\xt}  \nu^t_{2,j} D^2 F_{2,j}\xt\\&&
- \displaystyle\sum\limits_{j\in \mathcal{N}_{1}\xt \cup \mathcal{N}_{2}\xt}  \left( \nu^t_{1,j} D^2 F_{1,j}\xt + \nu^t_{2,j} D^2 F_{2,j}\xt\right).
\end{array}
\]
Then, we use the inclusions a)-f) in Corollary \ref{cor:activeindex:disj} to link both Hessians:
\[
 \begin{array}{rcl}
D^2 L^{\text{D}(t)}\xt
&{=}&
D^2f\xt + \displaystyle\sum\limits_{j\in a^-_{00}\left(\bar x\right)}
\left(\zeta^t_{1,j} D^2F_{1,j}\xt + 
\zeta^t_{2,j} D^2 F_{2,j}\xt\right)\\&&
+ \displaystyle\sum\limits_{j\in \mathcal{H}_{12}\xt\backslash a^-_{00}\left(\bar x\right)}
\left(\zeta^t_{1,j} D^2F_{1,j}\xt + 
\zeta^t_{2,j} D^2 F_{2,j}\xt\right)\\&&
+\displaystyle\sum\limits_{j\in a^-_{01}\left(\bar x\right)}
\eta^t_{1,j} D^2 F_{1,j}\xt
+\displaystyle\sum\limits_{j\in a^-_{10}\left(\bar x\right)}
\eta^t_{2,j} D^2 F_{2,j}\xt\\&&
+\displaystyle\sum\limits_{j\in \mathcal{H}_{1}\xt\backslash a^-_{01}\left(\bar x\right)}
\eta^t_{1,j} D^2 F_{1,j}\xt
+\displaystyle\sum\limits_{j\in \mathcal{H}_{2}\xt\backslash a^-_{10}\left(\bar x\right)}
\eta^t_{2,j} D^2 F_{2,j}\xt\\&&
- \displaystyle\sum\limits_{j\in a^+_{01}\left(\bar x\right)}   \nu^t_{1,j} D^2 F_{1,j}\xt
- \displaystyle\sum\limits_{j\in a^+_{10}\left(\bar x\right)}  \nu^t_{2,j} D^2 F_{2,j}\xt\\&&
- \displaystyle\sum\limits_{j\in \left(\mathcal{N}_{1}\xt\backslash \mathcal{N}_{2}\xt\right) \backslash a^+_{01}\left(\bar x\right)}   \nu^t_{1,j} D^2 F_{1,j}\xt
- \displaystyle\sum\limits_{j\in \left(\mathcal{N}_{2}\xt \backslash \mathcal{N}_{1}\xt\right) \backslash a^+_{10}\left(\bar x\right)}  \nu^t_{2,j} D^2 F_{2,j}\xt\\&&
- \displaystyle\sum\limits_{j\in a^+_{00}\left(\bar x\right)}  \left( \nu^t_{1,j} D^2 F_{1,j}\xt + \nu^t_{2,j} D^2 F_{2,j}\xt\right)\\&&
- \displaystyle\sum\limits_{j\in \left(\mathcal{N}_{1}\xt \cup \mathcal{N}_{2}\xt\right)\backslash a^+_{00}\left(\bar x\right)}  \left( \nu^t_{1,j} D^2 F_{1,j}\xt + \nu^t_{2,j} D^2 F_{2,j}\xt\right).
\end{array}
\]
Taking the limit and applying Corollary \ref{cor:multipliers:disj}, we get
\[
\lim\limits_{t \to 0} D^2 L^{\text{D}(t)} \xt = D^2 L(\bar x).
\]
As a consequence and due to MPCC-ND3, we have for all $t$ sufficiently small and $\xi \in \R^n$:
\begin{equation}
    \label{eq:xi-neg}
\xi^T D^2 L^{\text{D}(t)}\xt \xi<0  \mbox{ if and only if } \xi^T D^2 L(\bar x) \xi<0.
\end{equation}

Now, we turn our attention to the comparison of the corresponding tangent spaces. From the proof of Theorem \ref{thm:LICQ} we know that 
\[
\mathcal{H}_{12}\xt\cup\mathcal{H}_{1}\xt \cup \mathcal{N}_{1}\xt \subset a_{01}\left(\bar x\right) \cup a_{00}\left(\bar x\right)
\]
and 
\[\mathcal{H}_{12}\xt\cup\mathcal{H}_{2}\xt \cup \mathcal{N}_{2}\xt \subset a_{10}\left(\bar x\right) \cup a_{00}\left(\bar x\right).
\]
Thus, $\mathcal{T}_{\bar x}\subset \mathcal{T}^{\text{D}(t)}_{x^t}$ holds. Together with (\ref{eq:xi-neg}) we obtain $\text{MPCC-}QI\le \text{DISJ-}QI$.
Further, we have by means of inclusion a)-f) from Corollary \ref{cor:activeindex:disj}
\[
a^-_{01}\left(\bar x\right) \cup a^+_{01}\left(\bar x\right) \cup a^-_{00}\left(\bar x\right)\cup a^+_{00}\left(\bar x\right)
\subset
\mathcal{H}_{12}\xt\cup\mathcal{H}_{1}\xt \cup \mathcal{N}_{1}\xt
\]
and
\[
a^-_{10}\left(\bar x\right) \cup a^+_{10}\left(\bar x\right) \cup a^-_{00}\left(\bar x\right)\cup a^+_{00}\left(\bar x\right)
\subset
\mathcal{H}_{12}\xt\cup\mathcal{H}_{2}\xt \cup \mathcal{N}_{2}\xt.
\]
In view of MPCC-ND2, it also holds $a^0_{00}\left(\bar x\right) = \emptyset$. Altogether,
\[
\mathcal{T}_{\bar x}= \mathcal{T}^{\text{D}(t)}_{x^t}\cap
\left\{
\xi \in \R^{n}\,\left\vert\, \begin{array}{l}
\nabla^T F_{1,j}\left(x\right)\xi=0, j \in a^0_{01}\left(\bar x\right),\\
\nabla^T F_{2,j}\left(x\right)\xi=0, j \in a^0_{10}\left(\bar x\right)
\end{array}
\right.\right\}.
\]
Thus, the number of negative eigenvalues of $D^2 L^{\text{D}(t)}\xt \restriction_{\mathcal{T}^{\text{D}(t)}_{x^t}}$ exceeds the number of negative eigenvalues of $D^2 L(\bar x)\restriction_{\mathcal{T}_{\bar x}}$ by at most $\left| a^0_{01}\left(\bar x\right)\right|+\left|a^0_{10}\left(\bar x\right)\right|$.

\qed
\end{proof}

    We emphasize that in absence of MPCC-ND4 the shift of the C-index in Theorem \ref{thm:ttoc} cannot be avoided in general. This is illustrated by Example \ref{ex:NDC4}, where the lower bound for $\mbox{MPCC-}CI$ in (\ref{eq:main-index}) is attained. An analogous phenomenon of index shift has been discovered in context of the Scholtes regularization, see \cite{laemmel:anomalies}.  

\begin{example}[Necessity of MPCC-ND4]
\label{ex:NDC4}
We consider the following disjunctive regularization $\text{D}(t)$ with
$n=3$ and $\kappa=3$: 

\[
\begin{array}{rll}
 \text{D}(t):& \min\limits_x & -x_1-x_2+2x_1x_2+x_3^2\\
 &\mbox{s.t.}& \max\{t-x_1,t-(x_3-2x_2)\} \ge 0, x_1\ge 0, x_3-2x_2 \ge 0,\\
 &&\max\{t-(x_1+x_2), t-(2-x_3)\} \ge 0, x_1+x_2\ge 0, 2-x_3\ge 0,\\
 &&\max\{t-(x_3-1), t- (x_1-x_2+x_3)\} \ge 0, x_3-1 \ge 0, x_1-x_2+x_3 \ge 0.
\end{array}
\]
Let us show that $x^t=\left(\frac{t}{2}, \frac{t}{2},1\right)$ is a nondegenerate C-stationary point of $\text{D}(t)$ if $0<t<1$. Indeed, it is feasible with $\mathcal{H}_{1}\left(x^t\right)=\{2\}$, $\mathcal{N}_1\left(x^t\right)=\{3\}$, and $\mathcal{H}_{12}\left(x^t\right)= \mathcal{H}_{2}\left(x^t\right)= \mathcal{N}_2\left(x^t\right)=\emptyset$. Further, it holds
\[
\begin{pmatrix}
    -1+t\\-1+t\\2
\end{pmatrix}
=
-\eta^t_{1,2}\begin{pmatrix}
    1\\1\\0
\end{pmatrix}
+\nu^t_{1,3}\begin{pmatrix}
    0\\0\\1
\end{pmatrix}
\]
with the unique multipliers $\eta^t_{1,2}=1-t, \nu^t_{1,3}=2$.
Obviously, DISJ-ND1 and DISJ-ND2 are fulfilled.
For the tangent space at $x^t$ we have
\[
\mathcal{T}^{\text{D}(t)}_{x^t}=\left\{\xi \in \R^3\,\left\vert \,\xi_1=-\xi_2,\xi_3=0\right.\right\}.
\]
For the Hessian of the Lagrange function at $x^t$ we get
\[
D^2L^{\text{D}(t)}\left(x^t\right)=
\begin{pmatrix}
    0&2&0\\
    2&0&0\\
    0&0&2
\end{pmatrix}.
\]
Thus, for $\xi \in \mathcal{T}^{\text{D}(t)}_{x^t}$ with $\xi \ne 0$ it holds
\[
\xi^T D^2L^{\text{D}(t)}\left(x^t\right) \xi  =-4\xi_2^2<0.
\]
We conclude that DISJ-ND3 is also fulfilled and, thus, $x^t$ is nondegenerate. For the indices of $x^t$ we have $\mbox{DISJ-}QI=1$ and $\mbox{DISJ-}BI=0$.

Further, the sequence $x^t$ converges to $\bar x=(0,0,1)$ for $t \to 0$. From Example 2 of \cite{laemmel:anomalies} we know that $\bar x$ is a nondegenerate C-stationary point with vanishing C-index, i.e.~$\mbox{MPCC-}QI=0$ and $\mbox{MPCC-}BI=0$.
Here, the sequence $x^t$ of saddle points of $\text{D}(t)$ converges to the minimizer $\bar x$ of MPCC. This is caused by the violation of MPCC-ND4 at $\bar x$.
\qed
\end{example}

Our second main result focuses on the well-posedness of the disjunctive regularization $\text{D}(t)$. Again, the condition MPCC-ND4 becomes crucial here. 

\begin{theorem}[Well-posedness]
\label{thm:wellposedness}
   Let $\bar x \in M$ be a nondegenerate C-stationary point of MPCC with the quadratic and biactive indices $\mbox{MPCC-}QI$ and $\mbox{MPCC-}BI$, respectively. Assume that MPCC-ND4 holds at $\bar x$. Then, for all sufficiently small $t$ there exists a nondegenerate C-stationary point $x^t \in M^{\text{D}(t)}$ of $\text{D}(t)$ within a neighborhood of $\bar x$, which has the same respective quadratic and biactive indices
\[
   \mbox{DISJ-}QI = \mbox{MPCC-}QI, \quad
   \mbox{DISJ-}BI=\mbox{MPCC-}BI.
\]
Thus, for its C-index we have:
\[
   \mbox{DISJ-}CI = \mbox{MPCC-}CI.
\]
   Moreover, for any fixed $t$ sufficiently small, such $x^t$ is the unique C-stationary point of $\text{D}(t)$ in a sufficiently small neighborhood of $\bar x$.
\end{theorem}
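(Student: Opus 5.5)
Under MPCC-ND2 and MPCC-ND4, Corollary \ref{cor:activeindex:disj} dictates the active index sets of any nearby C-stationary point of $\text{D}(t)$:
\[
\mathcal{H}_{12}=a^-_{00}(\bar x),\quad \mathcal{H}_1=a^-_{01}(\bar x),\quad \mathcal{H}_2=a^-_{10}(\bar x),\quad \mathcal{N}_1\cap\mathcal{N}_2=a^+_{00}(\bar x),
\]
\[
\mathcal{N}_1\backslash\mathcal{N}_2=a^+_{01}(\bar x),\quad \mathcal{N}_2\backslash\mathcal{N}_1=a^+_{10}(\bar x).
\]
I will therefore construct $x^t$ directly from this guessed active pattern via the implicit function theorem. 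Consider the system in $(x,\lambda,t)$ consisting of
\[
\nabla f(x)=\sum_{j\in a_{01}(\bar x)}\lambda_{1,j}\nabla F_{1,j}(x)+\sum_{j\in a_{10}(\bar x)}\lambda_{2,j}\nabla F_{2,j}(x)+\sum_{j\in a_{00}(\bar x)}\bigl(\lambda_{1,j}\nabla F_{1,j}(x)+\lambda_{2,j}\nabla F_{2,j}(x)\bigr)
\]
together with the equality constraints
\[
F_{1,j}(x)=t \mbox{ for } j\in a^-_{01}(\bar x)\cup a^-_{00}(\bar x),\qquad F_{1,j}(x)=0 \mbox{ for } j\in a^+_{01}(\bar x)\cup a^+_{00}(\bar x),
\]
and the symmetric ones for $F_2$. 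MPCC-ND2 and MPCC-ND4 ensure that these cover all of $a_{01}(\bar x)$, $a_{10}(\bar x)$ and $a_{00}(\bar x)$. At $t=0$ the system is solved by $(\bar x,\bar\sigma,\bar\varrho)$.

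The Jacobian with respect to $(x,\lambda)$ at this point is the standard KKT matrix
\[
\begin{pmatrix} D^2L(\bar x) & -A\\ A^T & 0\end{pmatrix},
\]
where the columns of $A$ are the gradients from MPCC-LICQ. This matrix is nonsingular: MPCC-LICQ gives full column rank of $A$, and MPCC-ND3 gives nonsingularity of $D^2L(\bar x)\restriction_{\mathcal{T}_{\bar x}}$. The implicit function theorem then yields a unique $C^1$ branch $(x^t,\lambda^t)\to(\bar x,\bar\sigma,\bar\varrho)$.

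Next I check that $x^t$ is a nondegenerate C-stationary point of $\text{D}(t)$, identifying $\zeta^t=-\lambda^t$ on $a^-_{00}$, $\eta^t=-\lambda^t$ on $a^-_{01}\cup a^-_{10}$, and $\nu^t=\lambda^t$ on the $+$ sets.
\begin{itemize}
\item \emph{Sign conditions and DISJ-ND2.} By continuity the multipliers keep the strict signs of $\bar\sigma,\bar\varrho$.
\item \emph{Feasibility.} For $j\in a^-_{01}(\bar x)$ we have $F_{1,j}=t$ and $F_{2,j}\to F_{2,j}(\bar x)>0$, hence $F_{2,j}>t$ and $j\in\mathcal{H}_1$. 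For $j\in a^+_{01}(\bar x)$ we have $F_{1,j}=0\le t$ and $F_{2,j}>0$. On $a^\pm_{00}$ both values equal $t$, or both equal $0$. The inactive constraints stay inactive by continuity, so the active sets are exactly as prescribed.
\item \emph{DISJ-ND1.} This follows from Theorem \ref{thm:LICQ}.
\item \emph{DISJ-ND3 and the indices.} Since the active sets coincide with the MPCC ones, $\mathcal{T}^{\text{D}(t)}_{x^t}$ is defined by the same gradients as $\mathcal{T}_{\bar x}$, evaluated at $x^t$. It therefore has the same dimension and converges to $\mathcal{T}_{\bar x}$. Moreover $D^2L^{\text{D}(t)}(x^t)\to D^2L(\bar x)$. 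Because $D^2L(\bar x)\restriction_{\mathcal{T}_{\bar x}}$ is nonsingular, the restricted Hessians (written in a continuously varying basis) remain nonsingular with the same inertia, so $\mbox{DISJ-}QI=\mbox{MPCC-}QI$.
\item \emph{Biactive index.} $\mbox{DISJ-}BI=|\mathcal{H}_{12}(x^t)|=|a^-_{00}(\bar x)|=\mbox{MPCC-}BI$.
\end{itemize}
Alternatively, once existence is established, these index equalities follow directly from Theorem \ref{thm:ttoc}.

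For uniqueness I argue by contradiction. Suppose that along some $t_k\to0$ there are C-stationary points $y^{t_k}\neq x^{t_k}$ with $y^{t_k}\to\bar x$. By Theorem \ref{thm:t-sequence} and Corollaries \ref{cor:activeindex:disj} and \ref{cor:multipliers:disj}, which need only C-stationarity and not nondegeneracy of $y^{t_k}$, their active sets equal the prescribed ones for large $k$, and their multipliers converge to $(\bar\sigma,\bar\varrho)$. Hence $(y^{t_k},\lambda)$ solves the same system in the neighborhood where the implicit function theorem guarantees uniqueness, a contradiction.

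The main obstacle is this uniqueness step. It requires that Corollaries \ref{cor:activeindex:disj} and \ref{cor:multipliers:disj} hold for arbitrary C-stationary sequences converging to $\bar x$, rather than for a fixed sequence indexed by $t$. Their proofs use only the limit, so they pass to subsequences, but I must check that boundedness of the multipliers follows from MPCC-LICQ alone. A secondary technical point is the continuity of inertia of the restricted Hessians on the moving tangent spaces; I will handle it with a $C^1$ basis obtained from the constant-rank constraint Jacobian.
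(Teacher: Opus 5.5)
Your proposal is correct and follows the paper's proof essentially step by step: the same implicit-function system with the active pattern prescribed by the signs of $(\bar\sigma,\bar\varrho)$, nonsingularity of the bordered Hessian from MPCC-LICQ and MPCC-ND3, identification of the $\text{D}(t)$ multipliers by sign, and uniqueness via Corollary~\ref{cor:activeindex:disj} combined with the uniqueness part of the implicit function theorem. In a few places you are more careful than the paper, which treats them only informally: you handle the moving tangent spaces, you derive nonsingularity of the bordered matrix from nonsingularity (rather than definiteness) of the restricted Hessian, and you show that the competing multipliers converge into the IFT neighborhood, which indeed follows from MPCC-LICQ as in the proof of Theorem~\ref{thm:t-sequence}.
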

\begin{proof}
First, we prove the existence part.
For that, we consider the auxiliary system of equations $G(t,x,\sigma,\varrho) =0$ given by (\ref{eq:ift-stat})-(\ref{eq:ift-f2}), which mimics stationarity and feasibility. For stationarity we use:
\begin{equation}
\label{eq:ift-stat}
   \begin{array}{rcl}
\displaystyle
0 &=& -\nabla f\left( x\right) +
\displaystyle\sum\limits_{j\in a_{01}^-\left(\bar x\right)}
\sigma_{1,j} \nabla F_{1,j}\left(x\right)
+ \displaystyle\sum\limits_{j\in a_{01}^+\left(\bar x\right)}   \sigma_{1,j} \nabla F_{1,j}\left(x\right)
\\&&
+\displaystyle\sum\limits_{j\in a_{10}^-\left(\bar x\right)}
\sigma_{2,j} \nabla F_{2,j}\left(x\right)
+ \displaystyle\sum\limits_{j\in a_{10}^+(\bar x)}   \sigma_{2,j} \nabla F_{2,j}\left(x\right)\\&&
+\displaystyle\sum\limits_{j\in a_{00}^-\left(\bar x\right)}
\left(\varrho_{1,j} \nabla F_{1,j}\left(x\right) + \varrho_{2,j}\nabla F_{2,j}\left(x\right)\right)\\&&
+\displaystyle\sum\limits_{j\in a_{00}^+\left(\bar x\right)}
\left(\varrho_{1,j} \nabla F_{1,j}\left(x\right) + \varrho_{2,j}\nabla F_{2,j}\left(x\right)\right).
\end{array}
\end{equation}
For feasibility we use:
\begin{equation}
\label{eq:ift-f1}
F_{1,j}(x)-t=0,\,j\in a_{01}^-\left(\bar x\right)\cup a_{00}^-\left(\bar x\right), \quad
F_{1,j}\left(x\right)=0,\,j\in a_{01}^+\left(\bar x\right) \cup a_{00}^+\left(\bar x\right),     
\end{equation}
\begin{equation}
\label{eq:ift-f2}
F_{2,j}(x)-t=0,\,j\in a_{10}^-\left(\bar x\right) \cup a_{00}^-\left(\bar x\right), \quad
F_{2,j}\left(x\right)=0,\,j\in a_{10}^+\left(\bar x\right)\cup a_{00}^+\left(\bar x\right).
\end{equation}
Due to feasibility and C-stationarity for MPCC of $\bar x$ with multipliers $(\bar \sigma, \bar \rho)$, as well as MPCC-ND2 and MPCC-ND4, the vector
$(0,\bar x,\bar \sigma,\bar \varrho)$ solves the system of equations (\ref{eq:ift-stat})-(\ref{eq:ift-f2}).

We consider the blockwise matrix
$$
\displaystyle \frac{\partial G (t,x, \sigma, \varrho)}{\partial (x,\sigma,\varrho)}=\begin{bmatrix}
A&B\\
B^T&D
\end{bmatrix}.$$
Here, we have
\[
\begin{array}{rcl}
      A&=& 
\displaystyle -D^2 f\left( x\right)+\sum\limits_{j\in a_{01}^-\left(\bar x\right)}
\sigma_{1,j} D^2 F_{1,j}\left(x\right)
+ \displaystyle\sum\limits_{j\in a_{01}^+\left(\bar x\right)}   \sigma_{1,j} D^2 F_{1,j}\left(x\right)
\\&&
+\displaystyle\sum\limits_{j\in a_{10}^-\left(\bar x\right)}
\sigma_{2,j} D^2 F_{2,j}\left(x\right)
+ \displaystyle\sum\limits_{j\in a_{10}^+\left(\bar x\right)}   \sigma_{2,j} D^2 F_{2,j}\left(x\right)\\&&
+\displaystyle\sum\limits_{j\in a_{00}^-\left(\bar x\right)}
\left(\varrho_{1,j} D^2 F_{1,j}\left(x\right) + \varrho_{2,j} D^2 F_{2,j}\left(x\right)\right)\\&&
+\displaystyle\sum\limits_{j\in a_{00}^+\left(\bar x\right)}
\left(\varrho_{1,j} D^2 F_{1,j}\left(x\right) + \varrho_{2,j}D^2 F_{2,j}\left(x\right)\right).
\end{array}
\]
Due to MPCC-ND2, it holds $a_{00}^0(\bar x) = \emptyset$, and due to MPCC-ND4, $a_{01}^0(\bar x) = a_{10}^0(\bar x) = \emptyset$. Therefore, the columns of $B$ can be rearranged as follows:
\[
\nabla F_{1,j}\left(x\right), j \in a_{01}\left(\bar x\right)\cup a_{00}\left(\bar x\right),
\quad
\nabla F_{2}\left(x\right), j \in a_{10}\left(\bar x\right)\cup a_{00}\left(\bar x\right).
\]
Further, we have $D=0$. Therefore, we can apply Theorem 2.3.2 from \cite{jongen:2004}, which says that
$
\begin{bmatrix}
A&B\\
B^T&0
\end{bmatrix}$ is nonsingular if and only if
$\xi^TA\xi\ne0$ for all $\xi \in B^\perp\backslash \{0\}$. Here, $B^{\perp}$ refers to the orthogonal complement of the subspace spanned by the columns of $B$. 
We note that here $B^\perp=\mathcal{T}_{\bar x}$ at $\left(0, \bar x, \bar \sigma, \bar \varrho\right)$. 
For $\xi \in B^{\perp}$ with $ \xi \ne 0$ we check:
\[
\xi^TA\xi = - \xi^T D^2 L\left(\bar x\right) \xi \overset{\text{MPCC-ND3}}{\ne} 0.
\]
By using the implicit function theorem we obtain for any $t>0$ sufficiently small a solution $\left(t,x^t,\sigma^t,\varrho^t\right)$ of the system of equations (\ref{eq:ift-stat})-(\ref{eq:ift-f2}).

In virtue of (\ref{eq:ift-f1}), (\ref{eq:ift-f2}) and recalling $a_{01}^0(\bar x) = a_{10}^0(\bar x) = a_{00}^0(\bar x) = \emptyset$, $x^t$ are feasible for $\text{D}(t)$, i.e.~$x^t \in M^{\text{D}(t)}$, taking $t$ even smaller if necessary. In particular, it holds:
\begin{itemize}
    \item[(a)] $\mathcal{H}_{12}\left(x^t\right)=a_{00}^-\left(\bar x\right)$,
    \item[(b)] $\mathcal{H}_{1}\left(x^t\right)=a_{01}^-\left(\bar x\right)$,
    \item[(c)] $\mathcal{H}_{2}\left(x^t\right)=a_{10}^-\left(\bar x\right)$,
    \item[(d)] $\mathcal{N}_1\left(x^t\right)=a_{01}^+\left(\bar x\right)\cup a_{00}^+\left(\bar x\right)$,
    \item[(e)] $\mathcal{N}_2\left(x^t\right)=a_{10}^+\left(\bar x\right)\cup a_{00}^+\left(\bar x\right)$. 
    \end{itemize}
We can also ensure by continuity arguments  that
\begin{itemize}
    \item[(i)] $\sigma^t_{1,j}<0$, \,$j\in a^-_{01}\left(\bar x\right)$,
    $\sigma^t_{2,j}<0$, \,$j\in a^-_{10}\left(x^t\right)$,
    \item[(ii)] $\sigma^t_{1,j}>0$, \,$j\in a^+_{01}\left(\bar x\right)$,
    $\sigma^t_{2,j}>0$, \,$j\in a^+_{10}\left(x^t\right)$,
       \item[(iii)] $\varrho^t_{1,j}<0, \varrho^t_{2,j}<0$,\,$j\in a^-_{00}\left(\bar x\right)$,
   \item[(iv)] $\varrho^t_{1,j}>0,\varrho^t_{2,j}>0$,\,$j\in a^+_{00}\left(\bar x\right)$.
\end{itemize} 
Further, we rename the multipliers as follows
\[
\zeta^{t}_{1,j}=\left\{
\begin{array}{ll}
\displaystyle -\varrho^t_{1,j}& \mbox{for }j\in a_{00}^-\left(\bar x\right),\\
0&\mbox{else,}
\end{array}\right.
\quad
\zeta^{t}_{2,j}=\left\{
\begin{array}{ll}
\displaystyle -\varrho^t_{2,j}& \mbox{for }j\in a_{00}^-\left(\bar x\right),\\
0&\mbox{else,}
\end{array}\right.
\]
\[
\eta^{t}_{1,j}=\left\{
\begin{array}{ll}
\displaystyle -\sigma^t_{1,j}& \mbox{for }j\in a_{01}^-\left(\bar x\right),\\
0&\mbox{else,}
\end{array}\right.
\quad
\eta^{t}_{2,j}=\left\{
\begin{array}{ll}
\displaystyle -\sigma^t_{2,j}& \mbox{for }j\in a_{10}^-\left(\bar x\right),\\
0&\mbox{else,}
\end{array}\right.
\]
\[
\nu^t_{1,j}=\left\{
\begin{array}{ll}
\displaystyle \sigma_{1,j}^t& \mbox{for }j\in a_{01}^+\left(\bar x\right),\\ 
\displaystyle \varrho_{1,j}^t& \mbox{for }j\in a_{00}^+\left(\bar x\right),\\ 
0&\mbox{else,}
\end{array}\right.\\
\quad
\nu^t_{2,j}=\left\{
\begin{array}{ll}
\displaystyle \sigma_{2,j}^t& \mbox{for }j\in a_{10}^+\left(\bar x\right),\\ 
\displaystyle \varrho_{2,j}^t& \mbox{for }j\in a_{00}^+\left(\bar x\right),\\ 
0&\mbox{else.}
\end{array}\right.\\
\]
Substituting the latter in (\ref{eq:ift-stat}) and using (a)-(e) together with (i)--(iv), we conclude that $x^t$ is C-stationary for $\text{D}(t)$, i.e.~it fulfills (\ref{eq:disj-c-stat-1}) and (\ref{eq:disj-c-stat-2}) with multipliers $\left(\zeta^t,\eta^t,\nu^t\right)$.  

Due to Theorem \ref{thm:LICQ}, DISJ-ND1 is satisfied for all $t$ sufficiently small. Furthermore, DISJ-ND2 is valid due to the definition of $\left(\zeta^t,\eta^t,\nu^t\right)$ by recalling (a)-(e) and (i)-(iv).
We show that DISJ-ND3 is also fulfilled at $x^t$, i.e.~the restriction of 
$D^2 L^{\text{D}(t)}\left( x^t\right)$ on $\mathcal{T}^{\text{D}(t)}_{x^t}$ is nonsingular. With the same arguments as in the proof of Theorem \ref{thm:ttoc} we derive
\[
\lim\limits_{t \to 0} D^2 L^{\text{D}(t)} \xt= D^2 L(\bar x).
\]
In view of MPCC-ND3, we get for $\xi \in \mathcal{T}_{\bar x}\left(\bar x\right)$ with $\xi \ne 0$:
\[
\xi^T D^2 L^{\text{D}(t)} \xt \xi \ne 0.
\]
We refer again to the proof of Theorem \ref{thm:ttoc}, where by using MPCC-ND2 we derived
\[
\mathcal{T}_{\bar x}= \mathcal{T}^{\text{D}(t)}_{x^t}\cap
\left\{
\xi \in \R^{n}\,\left\vert\, \begin{array}{l}
\nabla^T F_{1,j}\left(x\right)\xi=0, j \in a^0_{01}\left(\bar x\right),\\
\nabla^T F_{2,j}\left(x\right)\xi=0, j \in a^0_{10}\left(\bar x\right)
\end{array}
\right.\right\}.
\]
By virtue of MPCC-ND4, the index sets here are empty, and we have $\mathcal{T}_{\bar x}=\mathcal{T}^{\text{D}(t)}_{x^t}$. Consequently, DISJ-ND3 holds at $x^t$.

We proceed with the uniqueness part. 
For that, let us consider an arbitrary C-stationary point $\widetilde x^t \in M^{\text{D}(t)}$ of $\text{D}(t)$ with multipliers $(\widetilde \zeta^t, \widetilde \eta^t,\widetilde \nu^t)$ in a sufficiently small neighborhood of $\bar x$. 
In view of MPCC-ND2 and MPCC-ND4, we can apply Corollary \ref{cor:activeindex:disj} to obtain a)--f) for the corresponding index sets at $\widetilde x^t$.
Next, we define the multipliers as follows
\[
\begin{array}{lclllcll}
\widetilde \sigma_{1,j}^t&=&-\widetilde \eta_j^t,&j \in a_{01}^-(\bar x), &
\widetilde \sigma_{1,j}^t&=&\widetilde\nu_{1,j}^t,&j \in a_{01}^+(\bar x),\\
\widetilde \sigma_{2,j}^t&=&-\widetilde \eta_j^t,&j \in a_{10}^-(\bar x),&
\widetilde \sigma_{2,j}^t&=&\widetilde\nu_{2,j}^t,&j \in a_{10}^+(\bar x),\\
\widetilde \varrho_{1,j}^t&=&-\widetilde \zeta_j^t  ,&j \in a_{00}^-(\bar x),&
\widetilde \varrho_{1,j}^t&=&\widetilde\nu_{1,j}^t,&j \in a_{00}^+(\bar x),\\
\widetilde \varrho_{2,j}^t&=&-\widetilde \zeta_j^t ,&j \in a_{00}^-(\bar x),&
\widetilde \varrho_{2,j}^t&=&\widetilde\nu_{2,j}^t,&j \in a_{00}^+(\bar x).\\
\end{array}
\]
A straightforward calculation yields that $\left(t, \widetilde x^t, \widetilde \sigma^t, \widetilde \varrho^t\right)$ fulfills equations (\ref{eq:ift-stat})-(\ref{eq:ift-f2}) for $t$ sufficiently small. Since the solution $x^t$ of this system of equations in the neighborhood of $\bar x$ was derived by means of the implicit function theorem, it must be unique, and, hence, $\widetilde x^t=x^t$.

    \qed
\end{proof}

Theorems \ref{thm:ttoc} and \ref{thm:wellposedness} can be easily extended to S-stationary points and, in particular, to local minimizers:
\begin{itemize}
    \item 
If $x^{t} \in M^{\text{D}(t)}$ is a sequence of nondegenerate M-stationary points of 
$\text{D}(t)$ and the S-stationary limiting point $\bar x \in M$ of MPCC is nondegenerate, cf.~Corollary \ref{cor:s-sequence}, then it holds:
\[
\mbox{MPCC-}BI=\mbox{DISJ-}BI=0.
\]
This observation is important for understanding why the disjunctive-type regularization of MPCC is advantageous. For the approximating points $x^t$, the condition $\mbox{DISJ-}BI=0$ means that the corresponding biactive disjunctive constraints are absent, i.e.~$\mathcal{H}_{12}(x^t)=\emptyset$. Hence, at least locally they are usual Karush-Kuhn-Tucker points and do not show nonsmoothness. On the other hand, $\mbox{MPCC-}BI=0$ just means that the biactive multipliers of $\bar x$ are positive, i.e.~$\bar\varrho_{1,j}>0$ and $\bar\varrho_{2,j} > 0$ for all $j\in a_{00}\left(\bar x\right)$. Here, it may well happen that $a_{00}\left(\bar x\right)\not=\emptyset$ and, thus, nonsmoothness prevails, see Example \ref{ex:biactive}. We note that, although the biactive indices remain vanishing, the index shift of the quadratic part can also occur for S-stationary points if MPCC-ND4 is violated, see Example \ref{ex:NDC4}. In general, for S-stationarity we again have
\[
\max\left\{\mbox{DISJ-}QI - s, 0\right\} \le \mbox{MPCC-}QI\le \mbox{DISJ-}QI.
\]
Applied to nondegenerate local minimizers $x^t \in M^{\text{D}(t)}$ of $\text{D}(t)$, for which $\mbox{DISJ-}QI=0$ is known to hold, we obtain from above that $\mbox{MPCC-}QI=0$ for $\bar x$. The latter is true even in the absence of MPCC-ND4. The limiting point $\bar x$ turns out to be then a local minimizer of MPCC, since its C-index vanishes, i.e.~$\mbox{MPCC-}CI=0$.
\item Vice versa, suppose $\bar x \in M$ is a nondegenerate M-stationary point of 
MPCC fulfilling MPCC-ND4.
Then, for all sufficiently small $t$ there exists a nondegenerate S-stationary point $x^t \in M^{\text{D}(t)}$ of $\text{D}(t)$ within a neighborhood of $\bar x$, which has the same respective quadratic and biactive indices
\[
   \mbox{DISJ-}QI = \mbox{MPCC-}QI, \quad
   \mbox{DISJ-}BI=\mbox{MPCC-}BI=0.
\]
Moreover, for any fixed $t$ sufficiently small, such $x^t$ is the unique C-stationary point of $\text{D}(t)$ in a sufficiently small neighborhood of $\bar x$. This result guarantees as above that, due to $\mbox{DISJ-}BI=0$, the biactive disjunctive constraints are absent for $x^t$. It has thus to be actually a Karush-Kuhn-Tucker point of $\text{D}(t)$, at least as viewed locally. Let us additionally assume that $\bar x\in M$ is a nondegenerate local minimizer of MPCC. In particular, $\mbox{MPCC-}QI=0$ holds for $\bar x$, and, we deduce $\mbox{DISJ-}QI=0$ for $x^t$. Since their C-index then vanishes, i.e.~$\mbox{DISJ-}CI=0$, the approximating points $x^t$ turn out to be local minimizers of $\text{D}(t)$.  
\end{itemize}
Overall, we conclude that nondegenerate S-stationary points of MPCC with nonsmooth biactive structure will be approximated by the nondegenerate actually Karush-Kuhn-Tucker points of $\text{D}(t)$. The same applies to the nondegenerate local minimizers of MPCC. While the latter could show nonsmoothness by having $a_{00}(\bar x) \not =\emptyset$, they will be always approximated by smooth local minimizers of $\text{D}(t)$ with $\mathcal{H}_{12}(x^t)=\emptyset$, cf. next Example \ref{ex:biactive}.

\begin{example}
    \label{ex:biactive}
We consider the following disjunctive regularization $\text{D}(t)$ with
$n=2$ and $\kappa=1$: 

\[
\begin{array}{rll}
 \text{D}(t):& \min\limits_x &x_1+x_2\\
 &\mbox{s.t.}& \max\{t-x_1,t-x_2\} \ge 0, x_1\ge 0, x_2 \ge 0.
\end{array}
\]
It is straightforward to check that $x^t=\left(0,0\right)$ is the unique C-stationary point of $\text{D}(t)$. In particular, it is a nondegenerate minimizer. 
Moreover, it holds $\mathcal{H}_{12}\left(x^t\right)=\emptyset$.

Let us consider the underlying MPCC:
\[
\begin{array}{rll}
 \mbox{MPCC}:& \min\limits_x &x_1+x_2\\
 &\mbox{s.t.}& x_1\cdot x_2= 0, x_1\ge 0, x_2 \ge 0
\end{array}
\]
together with the limiting point $\bar x=(0,0)$ of $x^t$ for $t\to 0$.
Clearly, $\bar x$ is the unique C-stationary of MPCC. Further, it is a nondegenerate minimizer fulfilling MPCC-ND4. However, the biactive set 
$a_{00}\left(\bar x\right)=\{1\}$ is not empty.
\qed
\end{example}


\section{Comparison to the existing literature}\label{sec:kanzow}

Let us turn our attention to the Kanzow-Schwartz regularization $\mbox{KS}(t)$ of MPCC from \cite{kanzow:Relax}. It has been suggested there to treat $\text{KS}(t)$ as an NLP instance. This is to say that the standard LICQ and usual Karush-Kuhn-Tucker points are considered for $\text{KS}(t)$. 


\begin{theorem}
    [MPCC-LICQ vs. LICQ, \cite{kanzow:Relax}]
\label{thm:LICQ-KS}
Let a feasible point $\bar x \in M$ of MPCC fulfill MPCC-LICQ. Then, LICQ holds at all feasible points $x \in M^{\text{KS}(t)}$ of $\text{KS}(t)$ with $\mathcal{H}_{12}\left( x\right)=\emptyset$ for all sufficiently small $t$, whenever they are sufficiently close to $\bar x$. 
\end{theorem}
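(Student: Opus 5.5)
The plan is to reduce LICQ for $\text{KS}(t)$ at a point with $\mathcal{H}_{12}(x)=\emptyset$ to DISJ-LICQ for $\text{D}(t)$, which Theorem \ref{thm:LICQ} already provides. This works because the two problems have the same feasible set, and away from the corner $F_{1,j}=F_{2,j}=t$ the active gradients of $\text{KS}(t)$ are nonzero multiples of those entering DISJ-LICQ.

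\textbf{Step 1: active KS constraints.} Fix $x \in M^{\text{KS}(t)}$ close to $\bar x$ with $\mathcal{H}_{12}(x)=\emptyset$. The active constraints of $\text{KS}(t)$ are of two kinds: $F_{i,j}(x)\ge 0$ for $j\in\mathcal{N}_i(x)$, and $\Phi_j(x,t)\le 0$ whenever $\Phi_j(x,t)=0$. Write $a=F_{1,j}(x)-t$ and $b=F_{2,j}(x)-t$. If $a+b<0$, then $\Phi_j=-\frac12(a^2+b^2)=0$ forces $a=b=0$, which contradicts $a+b<0$. Hence activity means $a+b\ge 0$ and $ab=0$, and since $\mathcal{H}_{12}(x)=\emptyset$ exactly one of $a,b$ vanishes. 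By symmetry take $a=0<b$, i.e.\ $j\in\mathcal{H}_1(x)$. Then $\varphi$ is $C^1$ near $(0,b)$ with $\varphi(a,b)=ab$. Its gradient there is $(b,a)=(b,0)$, so $\nabla_x\Phi_j(x,t)=(F_{2,j}(x)-t)\nabla F_{1,j}(x)$ with $F_{2,j}(x)-t>0$. Thus the active set of $\text{KS}(t)$ consists of $\mathcal{H}_1(x)\cup\mathcal{H}_2(x)$ together with $\mathcal{N}_1(x),\mathcal{N}_2(x)$, and each gradient is a nonzero scalar multiple of $\nabla F_{1,j}(x)$ for $j\in\mathcal{H}_1(x)\cup\mathcal{N}_1(x)$ or of $\nabla F_{2,j}(x)$ for $j\in\mathcal{H}_2(x)\cup\mathcal{N}_2(x)$.

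\textbf{Step 2: conclude.} Since $M^{\text{KS}(t)}=M^{\text{D}(t)}$, the point $x$ is feasible for $\text{D}(t)$. By Theorem \ref{thm:LICQ}, for $t$ small and $x$ close to $\bar x$, the vectors listed in DISJ-LICQ are linearly independent. The KS-active gradients from Step 1 are nonzero rescalings of a subfamily of these vectors, so they are linearly independent as well, which is LICQ. One also has to check that no index contributes two copies of the same gradient. This holds because $\mathcal{H}_1(x)\cap\mathcal{N}_1(x)=\emptyset$ for $t>0$, since $F_{1,j}(x)=t\neq 0$.

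\textbf{Main obstacle.} The only delicate part is the case analysis of $\Phi_j$: ruling out activity in the branch $a+b<0$, and confirming differentiability of $\varphi$ at active points off the diagonal. The switch line $a+b=0$ meets $ab=0$ only at the origin, which is excluded by $\mathcal{H}_{12}(x)=\emptyset$. The rest follows directly from Theorem \ref{thm:LICQ}.
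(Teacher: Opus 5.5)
Your proof is correct. The paper gives no proof of this statement. It quotes it from \cite{kanzow:Relax}, and the only related reasoning in the paper is the proof of Theorem \ref{thm:LICQ} and the remark that $\nabla\Phi_j(x,t)=0$ for $j\in\mathcal{H}_{12}(x)$.

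A self-contained proof would compute $\nabla_x\Phi_j$ on the active pieces. It would then repeat the index-set inclusion $\mathcal{H}_{1}(x)\cup\mathcal{N}_1(x)\subset a_{01}(\bar x)\cup a_{00}(\bar x)$, and its $F_2$-counterpart, together with the continuity argument from MPCC-LICQ.

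Your route splits the statement into a pointwise fact and an asymptotic part. The pointwise fact holds for any $t>0$ and any $x\in M^{\text{KS}(t)}$ with $\mathcal{H}_{12}(x)=\emptyset$:
\begin{itemize}
\item The active KS-gradients are in bijection with the DISJ-LICQ vectors, via the nonzero factors $F_{2,j}(x)-t$ and $F_{1,j}(x)-t$.
\item The disjointness $\mathcal{H}_i(x)\cap\mathcal{N}_i(x)=\emptyset$ that you check is exactly what makes this correspondence a bijection rather than producing duplicate gradients.
\item Hence LICQ for $\text{KS}(t)$ and DISJ-LICQ for $\text{D}(t)$ are equivalent at such points.
\end{itemize}
MPCC-LICQ then enters only through Theorem \ref{thm:LICQ}, via $M^{\text{KS}(t)}=M^{\text{D}(t)}$.

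This is shorter than a direct proof and avoids duplicating the neighborhood argument. It also shows why the result fails precisely when $\mathcal{H}_{12}(x)\neq\emptyset$: there the scaling factors vanish, which is the degeneracy the paper points out right after the statement.
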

\noindent
We note that LICQ does not hold at any feasible point $ x \in M^{\text{KS}(t)}$ of $\text{KS}(t)$ with $\mathcal{H}_{12}\left( x\right)\ne\emptyset$. This is due to the fact that for all $j \in \mathcal{H}_{12}\left( x\right)$ not only $\Phi_j\left( x,t\right)=0$, but also $\nabla \Phi_j\left( x,t\right)=0$ holds.
The degeneracy of such $ x$ hampers the study of $\text{KS}(t)$ from the perspective of Morse theory.  In particular, to trace the quadratic index of Karush-Kuhn-Tucker points of the Kanzow-Schwartz regularization is in general not possible.
Similarly, the standard LICQ also does not hold at any feasible point $\bar x \in M$ of MPCC with $a_{00}\left(\bar x\right)\ne \emptyset$. From this point of view $\text{KS}(t)$, like MPCC itself, turns out to be a degenerate NLP by construction. 

Furthermore, a direct attempt to write 
$\text{KS}(t)$ in terms of disjunctive optimization does not overcome degeneracy. To see this, we rewrite the feasible set of $\text{KS}(t)$ as follows:
\[
\begin{array}{rcl} 
    M^{\text{KS}(t)}
    &=&\left\{
    x \in\R^n\, \left\vert\,   
    \begin{array}{l} 
   \left(F_{1,j}(x)-t\right)\cdot\left(F_{2,j}(x)-t\right)\le 0 \text{ if }
   F_{1,j}(x)+F_{2,j}(x)-2t\ge0,\\
    F_{1,j}(x) \ge 0, F_{2,j}(x)\ge 0, j=1,\ldots,\kappa 
            \end{array}
    \right. \right\}.
    \end{array}
\]
We notice that for $F_{1,j}(x)+F_{2,j}(x)-2t=0$ it can neither hold
$F_{1,j}(x)>t$ and $F_{2,j}(x)>t$ nor $F_{1,j}(x)<t$ and $F_{2,j}(x)<t$.
Thus, the feasible set simplifies to
\[
\begin{array}{rcl}
    M^{\text{KS}(t)}
    &=&\left\{
    x \in\R^n\, \left\vert\,   
    \begin{array}{l} 
   \left(F_{1,j}(x)-t\right)\cdot\left(F_{2,j}(x)-t\right)\le 0 \text{ if }
   F_{1,j}(x)+F_{2,j}(x)-2t>0,\\
    F_{1,j}(x) \ge 0, F_{2,j}(x)\ge 0, j=1,\ldots,\kappa, 
            \end{array}
    \right. \right\}.
\end{array} 
\]
Finally, we obtain by means of the disjunctive constraints
\begin{equation}
\label{eq:ksdisj}    
\begin{array}{c}
    M^{\text{KS}(t)}
        =\left\{
    x \in\R^n\, \left\vert\,   
    \begin{array}{l} 
   \max\left\{-\left(F_{1,j}(x)-t\right)\cdot\left(F_{2,j}(x)-t\right),
   2t-F_{1,j}(x)-F_{2,j}(x)\right\}\ge0,\\
    F_{1,j}(x) \ge 0, F_{2,j}(x)\ge 0, j=1,\ldots,\kappa 
            \end{array}
    \right. \right\}.\\
    \end{array}
\end{equation}
\noindent
Applying the definition of DISJ-LICQ in \cite{jongen:1997} to the latter feasible set, $x \in M^{\text{KS}(t)}$ fulfills DISJ-LICQ if and only if the following vectors are linearly independent:
\[
-\nabla F_{1,j}\left( x\right)\cdot \left(F_{2,j}\left( x\right)-t\right)-\nabla F_{2,j}\left( x\right)\cdot \left(F_{1,j}\left( x\right)-t\right), j \in \mathcal{H}_{12}\left( x\right)\cup \mathcal{H}_1\left( x\right)\cup \mathcal{H}_2\left( x\right),\]
\[
-\nabla F_{1,j}\left( x\right) -\nabla F_{2,j}\left( x\right), j\in \mathcal{H}_{12}\left( x\right), \quad
\nabla F_{1,j}\left( x\right), j \in \mathcal{N}_1\left( x\right), \quad
\nabla F_{2,j}\left( x\right), j \in \mathcal{N}_2\left( x\right).
\]
However, $-\nabla F_{1,j}\left( x\right)\cdot \left(F_{2,j}\left( x\right)-t\right)-\nabla F_{2,j}\left( x\right)\cdot \left(F_{1,j}\left( x\right)-t\right)$ vanishes for all $j \in \mathcal{H}_{12}\left( x\right)$.
Therefore, DISJ-LICQ is violated at $ x$ whenever $\mathcal{H}_{12}\left( x\right)\ne \emptyset$. We emphasize that the observed violations of constraint qualifications for $\text{KS}(t)$ motivated us to use a new functional description for the regularized feasible set in $\text{D}(t)$.
Note that applying C-stationarity to $\text{KS}(t)$ if viewed as a disjunctive optimization problem via (\ref{eq:ksdisj}) is not promising either. By using the theory of disjunctive optimization from \cite{jongen:1997} once again, it is straightforward to deduce that
a feasible point $x\in M^{\text{KS}(t)}$ is C-stationary for $\text{KS}(t)$
if and only if it is C-stationary for $\text{D}(t)$ and it additionally holds for the corresponding multipliers:
\[
\zeta_{1,j}=\zeta_{2,j}, j\in \mathcal{H}_{12}\left(x\right). 
\]
Therefore, in comparison with $\text{D}(t)$, we only obtain a subset of C-stationary points. Thus, the study of the Kanzow-Schwartz regularization 
$\text{KS}(t)$ from the perspective of Morse theory is also hampered, even viewed as a disjunctive optimization problem. 
Further relations between $\text{D}(t)$ and $\text{KS}(t)$ are highlighted in Proposition \ref{lem:kktands} next.

\color{black}

\begin{proposition}[KKT-points and S-stationarity]
\label{lem:kktands}
A point $ x$ is a Karush-Kuhn-Tucker point of $\text{KS}(t)$ if and only if it is an S-stationary point of $\text{D}(t)$.    
\end{proposition}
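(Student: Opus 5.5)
The plan is a direct comparison of the two sets of first-order conditions. Since $M^{\text{KS}(t)}=M^{\text{D}(t)}$ has already been noted, only the multiplier systems need to be matched. First I would record the differentiability of $\varphi$. On $\{a+b>0\}$ we have $\nabla\varphi(a,b)=(b,a)$, and on $\{a+b<0\}$ we have $\nabla\varphi(a,b)=(-a,-b)$. On the line $b=-a$ both formulas give $(-a,a)$, so $\varphi$ is $C^1$ and $\mbox{KS}(t)$ is a $C^1$ NLP. By the chain rule,
\[
\nabla_x\Phi_j(x,t)=\left\{\begin{array}{ll}
(F_{2,j}(x)-t)\nabla F_{1,j}(x)+(F_{1,j}(x)-t)\nabla F_{2,j}(x), & F_{1,j}(x)+F_{2,j}(x)\ge 2t,\\
-(F_{1,j}(x)-t)\nabla F_{1,j}(x)-(F_{2,j}(x)-t)\nabla F_{2,j}(x), & \mbox{otherwise}.
\end{array}\right.
\]

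Next I would identify the active constraints $\Phi_j(x,t)=0$ at a feasible $x$. If $a+b<0$, then $-\frac12(a^2+b^2)<0$, so the constraint is inactive. If $a+b\ge 0$, then $ab=0$ with $a+b\ge 0$ forces one of $F_{1,j},F_{2,j}$ to equal $t$ and the other to be $\ge t$. Hence the active $\Phi$-constraints are exactly those with $j\in\mathcal{H}_{12}(x)\cup\mathcal{H}_1(x)\cup\mathcal{H}_2(x)$. The nonnegativity constraints are active exactly for $j\in\mathcal{N}_1(x)$ and $j\in\mathcal{N}_2(x)$, with the same gradients as in $\text{D}(t)$. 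On the active set the gradient simplifies as follows:
\begin{itemize}
\item for $j\in\mathcal{H}_{12}(x)$, $\nabla_x\Phi_j=0$;
\item for $j\in\mathcal{H}_1(x)$, $\nabla_x\Phi_j=(F_{2,j}(x)-t)\nabla F_{1,j}(x)$ with $F_{2,j}(x)-t>0$;
\item for $j\in\mathcal{H}_2(x)$, $\nabla_x\Phi_j=(F_{1,j}(x)-t)\nabla F_{2,j}(x)$ with $F_{1,j}(x)-t>0$.
\end{itemize}

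The KKT system of $\text{KS}(t)$ reads
\[
\nabla f(x)+\sum_j\lambda_j\nabla_x\Phi_j(x,t)-\sum_{j\in\mathcal{N}_1(x)}\nu_{1,j}\nabla F_{1,j}(x)-\sum_{j\in\mathcal{N}_2(x)}\nu_{2,j}\nabla F_{2,j}(x)=0,
\]
with $\lambda,\nu\ge 0$ and $\lambda_j=0$ off the active set. Substituting the gradients above, the $\mathcal{H}_{12}$-terms drop out. Comparing with (\ref{eq:disj-c-stat-1}), I would define
\[
\eta_{1,j}=\lambda_j(F_{2,j}(x)-t)\ \ (j\in\mathcal{H}_1(x)),\qquad \eta_{2,j}=\lambda_j(F_{1,j}(x)-t)\ \ (j\in\mathcal{H}_2(x)),\qquad \zeta_{1,j}=\zeta_{2,j}=0\ \ (j\in\mathcal{H}_{12}(x)),
\]
and keep $\nu$ unchanged. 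These multipliers are nonnegative, satisfy (\ref{eq:disj-c-stat-1}) and (\ref{eq:disj-c-stat-2}), and satisfy (\ref{eq:disj-sstat-2}), so $x$ is S-stationary for $\text{D}(t)$.

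For the converse, take an S-stationary point of $\text{D}(t)$, so $\zeta=0$. Set
\[
\lambda_j=\eta_{1,j}/(F_{2,j}(x)-t)\ \ (j\in\mathcal{H}_1(x)),\qquad \lambda_j=\eta_{2,j}/(F_{1,j}(x)-t)\ \ (j\in\mathcal{H}_2(x)),
\]
and put $\lambda_j=0$ otherwise, including on $\mathcal{H}_{12}(x)$. The denominators are strictly positive, so $\lambda\ge 0$, and the KKT conditions of $\text{KS}(t)$ hold with the same $\nu$.

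The main obstacle is the bookkeeping around the active sets rather than any deep argument. One must verify that $\varphi$ is genuinely $C^1$ across $a+b=0$, so that KKT points are well defined, and that no index in $\mathcal{N}_i(x)$ simultaneously makes $\Phi_j$ active. The latter holds because, for example, $F_{1,j}=0<t$ together with $a+b\ge 0$ forces $F_{2,j}\ge 2t$, whence $\Phi_j=-t(F_{2,j}-t)<0$. The symmetric case is identical, so the two index families never overlap.
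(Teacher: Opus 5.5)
Your proposal is correct and follows the same route as the paper. You use the same multiplier renaming, $\eta_{1,j}=\lambda_j(F_{2,j}(x)-t)$, $\eta_{2,j}=\lambda_j(F_{1,j}(x)-t)$ with $\nu$ unchanged, based on $\nabla_x\Phi_j$ vanishing on $\mathcal{H}_{12}(x)$ and being a positive multiple of $\nabla F_{1,j}$ or $\nabla F_{2,j}$ on $\mathcal{H}_1(x)$ and $\mathcal{H}_2(x)$. You additionally spell out what the paper leaves implicit: the $C^1$ property of $\varphi$, the identification of the active $\Phi$-constraints, and the converse via division by the positive factors $F_{2,j}(x)-t$ and $F_{1,j}(x)-t$.
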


\begin{proof}
      Since $ x \in M^{\text{KS}(t)}$ of $\text{KS}(t)$ is a Karush-Kuhn-Tucker point, there exist multipliers 
\[
\mu_{j}, \mu_{1,j}, \mu_{2,j},\,j\in\left\{1,\ldots,\kappa\right\},
\]
such that
\[
   \begin{array}{rcl}
\displaystyle
\nabla f\left(  x\right) &=& -\displaystyle\sum\limits_{j=1}^{\kappa}
\mu_{j} \nabla \Phi_{j}\left( x,t\right) +
\displaystyle\sum\limits_{j=1}^{\kappa}
\mu_{1,j} \nabla F_{1,j}\left( x\right)
+
\displaystyle\sum\limits_{j=1}^{\kappa}
\mu_{2,j} \nabla F_{2,j}\left( x\right),
\end{array}
\]
\[
 \begin{array}{c}
\mu_{j}\ge 0,
\mu_{j}\cdot \Phi_{j}\left( x,t\right)=0,\mu_{1,j}\ge 0, \mu_{1,j}\cdot F_{1,j}\left( x\right)=0, 
\mu_{2,j}\ge 0, \mu_{2,j}\cdot F_{2,j}\left( x\right)=0,
  j=1,\ldots,\kappa.
    \end{array}
\]
As in \cite{kanzow:Relax}, we rename the multipliers
\[
 \eta_{1,j}=  \mu_{j}\left(F_{2,j}\left( x\right)-t\right),
 \eta_{2,j}= \mu_{j}\left(F_{1,j}\left( x\right)-t\right),
 \nu_{1,j}= \mu_{1,j},
 \nu_{2,j}=  \mu_{2,j}, j=1,\ldots,\kappa,
\]
 and use the special structure of $\nabla \Phi$ 
 to obtain the S-stationarity of $ x$ for  $\text{D}(t)$:
\begin{equation}
   \label{eq:ks-kkt-1} 
   \begin{array}{rcl}
\displaystyle
\nabla f\left( x\right) &=&
-\displaystyle\sum\limits_{j\in \mathcal{H}_{1}\left( x\right)}
\eta_{1,j} \nabla F_{1,j}\left( x\right)
-\displaystyle\sum\limits_{j\in \mathcal{H}_{2}\left( x\right)}
\eta_{2,j} \nabla F_{2,j}\left( x\right)\\&&
+ \displaystyle\sum\limits_{j\in \mathcal{N}_{1}\left( x\right)}   \nu_{1,j} \nabla F_{1,j}\left( x\right)
+ \displaystyle\sum\limits_{j\in \mathcal{N}_{2}\left( x\right)}  \nu_{2,j} \nabla F_{2,j}\left( x\right).
\end{array}
\end{equation}
\begin{equation}
 \label{eq:ks-kkt-2}    
 \begin{array}{c}
\eta_{1,j}\ge 0, j \in\mathcal{H}_{1}\left( x\right),
\eta_{2,j}\ge 0, j \in\mathcal{H}_{2}\left( x\right),
\\ \\ 
\nu_{1,j}\ge 0, j \in\mathcal{N}_{1}\left( x\right),
\nu_{2,j}\ge 0, j \in\mathcal{N}_{2}\left( x\right).
    \end{array}
\end{equation}
The reverse implication follows analogously.
\qed

\end{proof}

Concerning the convergence of Karush-Kuhn-Tucker points of $\text{KS}(t)$ to S-stationary points of MPCC, we mention the following result from \cite{kanzow:Relax}.

\begin{theorem}[Convergence for S-stationarity, \cite{kanzow:Relax}]
\label{thm:s-statks}
        Suppose a sequence of Karush-Kuhn-Tucker points $ x^{t} \in M^{\text{KS}(t)}$ of $\text{KS}(t)$ with multipliers $\left(\mu^t, \mu_1^t, \mu_2^t\right)$ converges to $\bar x$ for $t \to 0$. Let MPCC-LICQ  be fulfilled at $\bar x \in M$. Additionally, assume that there exists a subsequence such that it holds:
\begin{equation}
    \label{eq:conditionks}
    F_{1,j}\left( x^t\right)\le t, F_{2,j}\left( x^t\right) \le t \mbox{ for all } j \in a_{00}\left(\bar x\right).
\end{equation}
    Then, $\bar x$ is an S-stationary point of MPCC.
    \end{theorem}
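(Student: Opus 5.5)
The plan is to reduce the statement to the machinery already developed for $\text{D}(t)$. By Proposition \ref{lem:kktands}, every Karush-Kuhn-Tucker point $x^t$ of $\text{KS}(t)$ is an S-stationary point of $\text{D}(t)$. The corresponding multipliers are $\eta^t_{1,j}=\mu^t_j\left(F_{2,j}(x^t)-t\right)$ for $j\in\mathcal{H}_1(x^t)$, $\eta^t_{2,j}=\mu^t_j\left(F_{1,j}(x^t)-t\right)$ for $j\in\mathcal{H}_2(x^t)$, $\nu^t_{i,j}=\mu^t_{i,j}$, and $\zeta^t_{1,j}=\zeta^t_{2,j}=0$ on $\mathcal{H}_{12}(x^t)$. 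We then pass to the subsequence on which (\ref{eq:conditionks}) holds and relabel it as $x^t$. Since it still converges to $\bar x$ with MPCC-LICQ there, the conclusion will follow from Proposition \ref{thm:s-statls} once we verify its hypothesis (\ref{eq:conditionls}) along this subsequence.

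The key step is to check the sufficient condition (\ref{eq:conditionls2}), namely $\left(\mathcal{H}_1(x^t)\cup\mathcal{H}_2(x^t)\right)\cap a^0_{00}(\bar x)=\emptyset$. In fact we can show more: $\left(\mathcal{H}_1(x^t)\cup\mathcal{H}_2(x^t)\right)\cap a_{00}(\bar x)=\emptyset$. Indeed, $j\in\mathcal{H}_1(x^t)$ requires $F_{2,j}(x^t)>t$, while (\ref{eq:conditionks}) gives $F_{2,j}(x^t)\le t$ for every $j\in a_{00}(\bar x)$. The case $j\in\mathcal{H}_2(x^t)$ is symmetric. Hence (\ref{eq:conditionls2}) holds, so (\ref{eq:conditionls}) is trivially satisfied, and Proposition \ref{thm:s-statls} yields S-stationarity of $\bar x$.

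The main point needing care is the passage through Proposition \ref{thm:s-statls}. Its proof uses Theorem \ref{thm:t-sequence}, which needs convergent multipliers; MPCC-LICQ provides these along any subsequence. One must also confirm that the biactive case is harmless. For $j\in a_{00}(\bar x)\cap\mathcal{H}_{12}(x^t)$ the multipliers $\zeta^t$ vanish by S-stationarity, so $\bar\varrho_{i,j}=\lim_{t\to 0}\nu^t_{i,j}\ge 0$ or $\bar\varrho_{i,j}=0$. For $j\in a_{00}(\bar x)$ outside $\mathcal{H}_{12}\cup\mathcal{H}_1\cup\mathcal{H}_2$, only the nonnegative $\nu^t$ contribute. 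Thus all biactive multipliers of $\bar x$ are nonnegative, which is exactly (\ref{eq:sstat-2}).
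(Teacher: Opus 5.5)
Your proof is correct and is essentially the reduction the paper gives right after the statement, which it only cites from \cite{kanzow:Relax}. In that reduction, Proposition \ref{lem:kktands} turns the KKT points into S-stationary points of $\text{D}(t)$, condition (\ref{eq:conditionks}) removes $a_{00}(\bar x)$ from $\mathcal{H}_1(x^t)\cup\mathcal{H}_2(x^t)$ so that (\ref{eq:conditionls2}) holds along the subsequence, and Proposition \ref{thm:s-statls} concludes; your third paragraph is moreover a self-contained direct argument, since on the subsequence the aggregated multiplier $\mu^t_{i,j}$ from the proof of Theorem \ref{thm:t-sequence} is, for each $j\in a_{00}(\bar x)$, either $0$ or $\nu^t_{i,j}\ge 0$, whence $\bar\varrho_{i,j}\ge 0$.
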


Suppose MPCC-ND2 is fulfilled at $\bar x$. Thus, it holds $a_{00}^0\left(\bar x\right)=\emptyset$.
If additionally MPCC-LICQ holds at $\bar x$, we can use Corollary \ref{cor:activeindex:disj} to conclude that e) and f) hold with equality. As consequence, MPCC-ND2 implies condition (\ref{eq:conditionks}). In turn, condition (\ref{eq:conditionks}) yields 
$\mathcal{H}_1\xt\cap a_{00}\left(\bar x\right)=\mathcal{H}_2\xt \cap a_{00}\left(\bar x\right)=\emptyset$, in particular, condition (\ref{eq:conditionls2}) is valid. Overall, the following implications hold:
\[
\mbox{MPCC-ND2} \quad
\overset{\text{MPCC-LICQ}}{\Longrightarrow}\quad
(\ref{eq:conditionks})\quad {\Longrightarrow} \quad 
(\ref{eq:conditionls2})\quad
\Longrightarrow \quad
(\ref{eq:conditionls}).
\]
Although MPCC-ND2 is the strongest condition among these, its advantage cannot be neglected. In fact, it has to be checked only at the limiting point $\bar x$, whereas the other conditions apply to the entire sequence $x^t$. The latter hampers the applicability of (\ref{eq:conditionls}), (\ref{eq:conditionls2}), or (\ref{eq:conditionks}).
 Nevertheless, we illustrate by means of Example \ref{ex:ls} that both conditions (\ref{eq:conditionls}) and (\ref{eq:conditionls2}) are strictly weaker than (\ref{eq:conditionks}), cf. Proposition \ref{thm:s-statls}.

\begin{example}[Convergence under (\ref{eq:conditionls})]
\label{ex:ls}
We consider the following disjunctive regularization with $n=4$ and $\kappa=2$:
\[
\begin{array}{rrl}
\text{D}(t):&\min\limits_{x}& x_1^2-x_2^2-x_2+x_3^2+x_3+(x_4-1)^2\\
&\mbox{s.\,t.} &\max\left\{t-(x_1-x_2),t-(x_1+x_2)\right\} \ge 0, x_1-x_2\ge 0, x_1+x_2\ge 0,\\
&&\max\left\{t-(x_2-x_3),t-x_4)\right\} \ge 0, x_2-x_3\ge 0, x_4\ge 0.
\end{array}
\]
The point $x^t=(t,t,0,1)$ is an S-stationary point of $\text{D}(t)$ for $t<1$ with
$\mathcal{H}_1\xt=\left\{2\right\}$ and $\mathcal{H}_2\xt=\emptyset$.
Indeed, it holds
\[
\begin{pmatrix}
    2t\\
    -2t-1\\
    1\\
    0
\end{pmatrix}
=
-\eta^t_{1,2}
\begin{pmatrix}
    0\\
    1\\
    -1\\
    0
\end{pmatrix}
+\nu^t_{1,1}
\begin{pmatrix}
    1\\
    -1\\
    0\\
    0
\end{pmatrix}
\]
with the unique multipliers $\eta^t_{1,2}=1$ and $\nu^t_{1,1}=2t$.
Note, that $x^t$ is in fact a nondegenerate minimizer of $\text{D}(t)$. Clearly, DISJ-ND1 and DISJ-ND2 are fulfilled. Let us check DISJ-ND3. We have for its Hessian
\[
D^2L^{\text{D}(t)}\xt=\begin{pmatrix}
    2&0&0&0\\
    0&-2&0&0\\
    0&0&2&0\\
    0&0&0&2
\end{pmatrix}.
\]
For its tangent space we have
\[
\mathcal{T}_{x^t}^{\text{D}(t)}=\left\{\xi \in \R^4\,\left\vert\,\xi_1=\xi_2=\xi_3\right.\right\}.
\]
Hence for $\xi \in \mathcal{T}_{x^t}^{\text{D}(t)}$ with $\xi\ne 0$ it holds
\[
\xi^T D^2L^{\text{D}(t)}\xi=2\left(\xi_1^2-\xi_2^2+\xi_3^2+\xi_4^2\right)>0.
\]
Moreover, for the C-index of $x^t$ we also get $\mbox{MPCC-}CI=0$, hence, it is indeed a minimizer.

Let us consider the underlying MPCC:
\[
\begin{array}{rrl}
\mbox{MPCC}:&\min\limits_{x}& x_1^2-x_2^2-x_2+x_3^2+x_3+(x_4-1)^2\\
&\mbox{s.\,t.} &(x_1-x_2)\cdot (x_1+x_2)= 0, x_1-x_2\ge 0, x_1+x_2\ge 0,\\
&&(x_2-x_3)\cdot x_4= 0, x_2-x_3\ge 0, x_4\ge 0,
\end{array}
\]
together with $\bar x=(0,0,0,1)$, i.e.~the limiting point of $x^t$ for $t \to 0$.
The latter is an S-stationary point of MPCC. Indeed, it holds
\[
\begin{pmatrix}
    0\\
    -1\\
    1\\
    0
\end{pmatrix}
=
\bar \sigma_{1,2}
\begin{pmatrix}
    0\\
    1\\
    -1\\
    0
\end{pmatrix}
+
\bar \varrho_{1,1}
\begin{pmatrix}
    1\\
    -1\\
    0\\
    0
\end{pmatrix}
+
\bar \varrho_{2,1}
\begin{pmatrix}
    1\\
    1\\
    0\\
    0
\end{pmatrix}
\]
with the unique multipliers
$\bar \sigma_{1,2}=-1$, $\bar \varrho_{1,1}=0$, and $\bar \varrho_{2,1}=0$. 
Note, that since $a_{00}\left(\bar x\right)=a^0_{00}\left(\bar x\right)=\left\{1\right\}$, condition (\ref{eq:conditionls2}) is fulfilled, hence, also (\ref{eq:conditionls}). However, neither (\ref{eq:conditionks}) nor MPCC-ND2 hold.
\qed
\end{example}

The following Example \ref{ex:ks} illustrates that condition (\ref{eq:conditionks}), and, hence, also (\ref{eq:conditionls}) and (\ref{eq:conditionls2}), are not sufficient to guarantee that a sequence of minimizers $x^t$ of $\text{D}(t)$ converges to a minimizer $\bar x$ of MPCC, cf. Theorem \ref{thm:s-statks}.

\begin{example}[Convergence under (\ref{eq:conditionks})]
\label{ex:ks}
We consider the following disjunctive regularization with $n=4$ and $\kappa=2$:
\[
\begin{array}{rrl}
\text{D}(t):&\min\limits_{x}& x_1^2+x_1x_3-x_2-x_3^2+(x_4-1)^2\\
&\mbox{s.\,t.} &\max\left\{t-(x_1-x_2),t-x_3\right\} \ge 0, x_1-x_2\ge 0, x_3\ge 0,\\
&&\max\left\{t-x_2,t-x_4\right\} \ge 0, x_2\ge 0, x_4\ge 0.
\end{array}
\]
The point $x^t=(t,t,0,1)$ is an S-stationary point of $\text{D}(t)$ for $t<\frac{1}{2}$ with
$\mathcal{H}_1\xt=\{2\}$, $\mathcal{H}_2\xt=\emptyset$, and $\mathcal{N}_1\xt=\mathcal{N}_2\xt=\{1\}$. Indeed, it holds
\[
\begin{pmatrix}
    2t\\
    -1\\
    t\\
    0
\end{pmatrix}
=
-\eta^t_{1,2}
\begin{pmatrix}
    0\\
    1\\
    0\\
    0
\end{pmatrix}
+\nu^t_{1,1}
\begin{pmatrix}
    1\\
    -1\\
    0\\
    0
\end{pmatrix}
+\nu^t_{2,1}
\begin{pmatrix}
    0\\
    0\\
    1\\
    0
\end{pmatrix}
\]
with the unique multipliers $\eta^t_{1,2}=1-2t$, $\nu^t_{1,1}=2t$, and $\nu^t_{2,1}=t$.
Note, that $x^t$ is in fact a nondegenerate minimizer of $\text{D}(t)$. Clearly, DISJ-ND1 and DISJ-ND2 are fulfilled. Let us check DISJ-ND3. We have for its Hessian
\[
D^2L^{\text{D}(t)}\xt=\begin{pmatrix}
    2&0&1&0\\
    0&0&0&0\\
    1&0&-2&0\\
    0&0&0&2
\end{pmatrix}.
\]
For its tangent space we have
\[
\mathcal{T}_{x^t}^{\text{D}(t)}=\left\{\xi \in \R^4\,\left\vert\,\xi_1=\xi_2=\xi_3=0\right.\right\}.
\]
Hence for $\xi \in \mathcal{T}_{x^t}^{\text{D}(t)}$ with $\xi\ne 0$ it holds
\[
\xi^T D^2L^{\text{D}(t)}\xi=2\left(\xi_1^2-\xi_3^2+\xi_4^2+\xi_1\xi_3\right)>0.
\]
Moreover, for the C-index of $x^t$ we also get $\mbox{DISJ-}CI=0$, hence, it is indeed a minimizer.

Let us consider the underlying MPCC:
\[
\begin{array}{rrl}
\mbox{MPCC}:&\min\limits_{x}& x_1^2+x_1x_3-x_2-x_3^2+(x_4-1)^2\\
&\mbox{s.\,t.} &(x_1-x_2)\cdot x_3= 0, x_1-x_2\ge 0, x_3\ge 0,\\
&&x_2\cdot x_4= 0, x_2\ge 0, x_4\ge 0,
\end{array}
\]
together with $\bar x=(0,0,0,1)$, i.e.~the limiting point of $x^t$ for $t \to 0$.
Note that MPCC-LICQ is fulfilled at $\bar x$. Moreover, condition (\ref{eq:conditionks}) is also fulfilled in view of $a_{00}\left(\bar x\right)=\{1\}$. According to Theorem \ref{thm:s-statks}, $\bar x$ is an S-stationary point of MPCC. Indeed, it holds
\[
\begin{pmatrix}
    0\\
    -1\\
    0\\
    0
\end{pmatrix}
=
\bar \sigma_{1,2}
\begin{pmatrix}
    0\\
    1\\
    0\\
    0
\end{pmatrix}
+\bar \varrho_{1,1}
\begin{pmatrix}
    1\\
    -1\\
    0\\
    0
\end{pmatrix}
+\bar \varrho_{2,1}
\begin{pmatrix}
    0\\
    0\\
    1\\
    0
\end{pmatrix}
\]
with the unique multipliers
$\bar \sigma_{1,2}=-1$, $\bar \varrho_{1,1}=0$, and $\bar \varrho_{2,1}=0$.
However, $\bar x$ is not a minimizer. In fact, it holds
$f\left(x_{\varepsilon}\right)<f\left(\bar x\right)$ for all  $x_{\varepsilon}=(0,0,\varepsilon,1)$ with $\varepsilon>0$.
Note that here the nondegeneracy assumptions from our Theorem \ref{thm:ttoc} are fulfilled, except of MPCC-ND2 at $\bar x$. 
\qed
\end{example}


Next, let us focus on the well-posedness results given in \cite{kanzow:Relax}. For this purpose, we define the following index sets for an S-stationary point $\bar x \in M$ of MPCC with multipliers $(\bar \sigma, \bar \varrho)$:
\[
a_{00}^1(\bar x)=\left\{
j \in a_{00}\left(\bar x\right) 
\,\left\vert \, \bar \varrho_{1,j}>0
\right. \right\}, \quad
a_{00}^2(\bar x)=\left\{
j \in a_{00}\left(\bar x\right) 
\,\left\vert \, \bar \varrho_{2,j}>0
\right. \right\}.
\]
Additionally, we define the set of critical directions:
\[
\mathcal{C}_{\bar x}=\left\{\xi \in \R^n \,\left\vert \,
\begin{array}{l}
\nabla^T F_{1,j}\left(\bar x\right)\xi=0, j \in a_{01}^-(\bar x) \cup a_{01}^+(\bar x)\cup a_{00}^1(\bar x),\\
\nabla^T F_{2,j}\left(\bar x\right)\xi=0, j \in a_{10}^-(\bar x) \cup a_{10}^+(\bar x)\cup a_{00}^2(\bar x)
\end{array}
\right.\right\}. 
\]
We now recall the MPCC-tailored second order condition first used in \cite{scholtes:2001}.

\begin{definition}[SSOSC, \cite{kanzow:Relax}]
   Let $\bar x \in M$ be an S-stationary point of MPCC with the multipliers $(\bar \sigma, \bar \varrho)$. We say that it fulfills the strong second order sufficiency condition (SSOSC) if for every $\xi \in {\mathcal{C}}_{\bar x} \backslash \{0\}$ it holds: 
    \[
    \xi^T D^2L\left(\bar x\right)\xi >0.
    \]
\end{definition}
Let us present Theorem 4.12 from \cite{kanzow:Relax} in a slightly modified form.

    

\begin{theorem}[Well-posedness, \cite{kanzow:Relax}]
\label{thm:wellposednessks}
   Let $\bar x \in M$ be an S-stationary point of MPCC that satisfies MPCC-LICQ and 
   SSOSC. Then, for all sufficiently small $t$ there exists at least one local minimizer $ x^t \in M^{\text{KS}(t)}$ of $\text{KS}(t)$ within a neighborhood of $\bar x$.
\end{theorem}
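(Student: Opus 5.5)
The key observation is that $M^{\text{KS}(t)}=M^{\text{D}(t)}$ as sets, so local minimizers of $\text{KS}(t)$ and of $\text{D}(t)$ coincide. It therefore suffices to produce, for all small $t$, a local minimizer of $f$ on $M^{\text{D}(t)}$ near $\bar x$. We follow the classical localization argument.

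First, SSOSC at the S-stationary point $\bar x$ under MPCC-LICQ implies that $\bar x$ is a strict local minimizer of MPCC. More precisely, we want a quadratic growth condition: there are $r>0$ and $c>0$ with
\[
f(x)\ge f(\bar x)+c\|x-\bar x\|^2 \quad\text{for all } x\in M\cap \overline{B}(\bar x,r).
\]
This can be taken from the standard MPCC theory via the branch NLPs. For each partition $a_{00}=I_1\cup I_2$, consider the NLP in which $F_{1,j}=0$ and $F_{2,j}\ge 0$ for $j\in I_1$, and symmetrically for $I_2$. Under MPCC-LICQ the S-multipliers are KKT multipliers for every branch. Each branch's critical cone lies in $\mathcal{C}_{\bar x}$, so the standard second order sufficient condition gives quadratic growth on each branch. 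Since $M$ is locally the union of the branch feasible sets, growth holds on $M$.

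Next, fix such an $r$. The set $K_t=M^{\text{D}(t)}\cap\overline{B}(\bar x,r)$ is compact and nonempty, since $\bar x\in M\subset M^{\text{D}(t)}$. Hence $f$ attains a minimum on $K_t$ at some $x^t$. The aim is to show $\|x^t-\bar x\|<r$ for small $t$, so that $x^t$ is an interior minimizer of the ball and therefore a local minimizer of $\text{D}(t)$, equivalently of $\text{KS}(t)$.

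For this, take any accumulation point $\hat x$ of $x^t$ as $t\to 0$. Since $M^{\text{D}(t)}$ decreases to $M$ (the constraint $\min\{F_{1,j},F_{2,j}\}\le t$ gives $\min\{F_{1,j},F_{2,j}\}=0$ in the limit), we have $\hat x\in M\cap\overline{B}(\bar x,r)$. Moreover $f(x^t)\le f(\bar x)$, so $f(\hat x)\le f(\bar x)$, and quadratic growth forces $\hat x=\bar x$. Thus $x^t\to\bar x$, and for small $t$ the point $x^t$ lies in the open ball, which completes the argument.

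The main obstacle is the first step, the strict local minimality or growth from SSOSC. It needs the branch decomposition and the verification that each branch critical cone is contained in $\mathcal{C}_{\bar x}$. In a branch, $F_{1,j}$ for $j\in a^1_{00}$ carries a positive multiplier, so its linearization is zero on critical directions. The remaining biactive directions are only sign-constrained, and this is exactly why $\mathcal{C}_{\bar x}$ omits them. If we only want existence, strictness of the minimizer rather than quadratic growth already suffices for the compactness argument.
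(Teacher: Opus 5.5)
Your proof is correct. The paper gives no proof of this statement: it quotes it, in modified form, from Theorem 4.12 of Kanzow--Schwartz. The natural in-paper comparison is therefore the paper's own well-posedness result, Theorem~\ref{thm:wellposedness}. That result is proved by applying the implicit function theorem to a smooth system that fixes the active pieces. It needs MPCC-ND2 and MPCC-ND4, both to know which pieces are active and to get a nonsingular Jacobian. In return it yields uniqueness of the C-stationary point near $\bar x$, its nondegeneracy, and preservation of the indices.

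Your route is purely variational, the classical argument that a strict local minimizer persists when the feasible set is approximated from outside. SSOSC is used only to make $\bar x$ a strict local minimizer of MPCC. Your check that every branch critical cone lies in $\mathcal{C}_{\bar x}$ is right. $\mathcal{C}_{\bar x}$ imposes nothing on constraints with vanishing multiplier. In any branch, the constraints indexed by $a^1_{00}$ (resp.\ $a^2_{00}$) are either equalities or inequalities carrying a positive multiplier. After that, three facts force the global minimizers over $M^{\text{D}(t)}\cap\overline{B}(\bar x,r)$ into the open ball:
\begin{itemize}
\item $M\subset M^{\text{D}(t)}$;
\item $M^{\text{D}(t)}$ is closed;
\item limits of points of $M^{\text{D}(t)}$ as $t\to 0$ lie in $M$.
\end{itemize}

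This route buys generality. MPCC-LICQ is in fact never used. S-stationarity alone makes the S-multipliers KKT multipliers of every branch, and second-order sufficiency for a branch NLP needs no constraint qualification. You also get $x^t\to\bar x$ for free. The same argument works for any regularization whose feasible sets contain $M$ and shrink to it, e.g.\ Scholtes.

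The price is that nothing is said about uniqueness, stationarity type, or nondegeneracy of $x^t$. This matches Example~\ref{ex:ssosc}, where the minimizers obtained are degenerate. MPCC-LICQ would only enter if you also wanted $x^t$ to be a Karush--Kuhn--Tucker point of $\text{KS}(t)$, which requires a constraint qualification at $x^t$.
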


We elaborate on the relations between Theorems \ref{thm:wellposedness} and \ref{thm:wellposednessks}. Let a nondegenerate local minimizer $\bar x\in M$ of MPCC fulfill MPCC-ND4. Then, $\bar x$ is S-stationary and fulfills SSOSC. Both Theorems \ref{thm:wellposedness} and \ref{thm:wellposednessks} can be applied to get (unique) local minimizers of $\text{D}(t)$ and $\text{KS}(t)$, respectively, in the vicinity of $\bar x$. Note that Theorem \ref{thm:wellposedness} can be likely applied in the case of saddle points of MPCC, whereas Theorem \ref{thm:wellposednessks} does not need the conditions MPCC-ND2 or MPCC-ND4  to be imposed. The price to pay is that the assumptions of Theorem \ref{thm:wellposednessks} do not in general ensure nondegeneracy of the minimizers $ x^t\in M^{\text{KS}(t)}$ of $\text{KS}(t)$ even if the S-stationary point $\bar x\in M$ of MPCC is nondegenerate, see Example \ref{ex:ssosc}. According to the latter, the same conclusion is true if the nondegeneracy for $ x^t\in M^{\text{D}(t)}$ is checked with respect to $\text{D}(t)$. Overall, it becomes clear that both theorems are of independent interest. 
    
\begin{example}[Degeneracy under SSOSC]
\label{ex:ssosc}
    We consider the following MPCC with
$n=2$ and $\kappa=1$: 
\[
\begin{array}{rll}
\mbox{MPCC}:&\min\limits_x & x_1^2+x_2^2\\
&\mbox{s.t.}& x_1\cdot (1-x_2) = 0, x_1\ge 0, 1-x_2 \ge 0.
\end{array}
\]
In \cite{laemmel:anomalies} it was shown, that $\bar x=(0,0)$ is a nondegenerate S-stationary point violating MPCC-ND4.
Further, it is straightforward to see that $\bar x$ fulfills SSOSC.
Let us now focus on the corresponding Kanzow-Schwartz regularization
\[
\begin{array}{rll}
\text{KS}(t):&\min\limits_x & x_1^2+x_2^2\\
&\mbox{s.t.}&\Phi(x,t)\le0, x_1\ge 0, 1-x_2\ge 0,
\end{array}
\]
where 
\[
\Phi(x,t)=\left\{
\begin{array}{ll}
     (x_1-t)\cdot(1-x_2-t)&\text{for } x_1-t+1-x_2-t \ge 0, \\
     -\frac{1}{2}\left((x_1-t)^2+(1-x_2-t)^2\right)& \text{for } x_1-t+1-x_2-t < 0.
\end{array}
\right.
\]
We have that $ x^t=(0,0)$ is a Karush-Kuhn-Tucker point of $\text{KS}(t)$, since it holds
\[
\begin{pmatrix}
0\\
    0
\end{pmatrix}
=
 \mu^t_{1,1}
\begin{pmatrix}
    1\\
    0
\end{pmatrix}
\]
with the unique multiplier $ \mu^t_{1,1}=0$. Moreover, we have $ x^t \to \bar x$ for $t \to 0$. Although $\mathcal{H}_{12}\left( x\right)=\emptyset$ and, due to Theorem \ref{thm:LICQ-KS}, LICQ is fulfilled, the standard strict complementarity
is violated. Hence, $x^t$ is degenerate as a Karush-Kuhn-Tucker point for $\text{KS}(t)$. 

Instead, let us focus on the  disjunctive regularization 
\[
\begin{array}{rll}
\text{D}(t):&\min\limits_x & x_1^2+x_2^2\\
&\mbox{s.t.}&\max\left\{t-x_1,t-(1-x_2)\right\} \ge 0, x_1\ge 0, 1-x_2\ge 0.
\end{array}
\]
Clearly, $x^t=(0,0)$ is an S-stationary point of $\text{D}(t)$  with the
unique multiplier $\nu_{1,1}^t=0$, and $x^t \to \bar x$ for $t \to 0$.
However, $x^t$ violates DISJ-ND2 and is therefore degenerate for $\text{D}(t)$.
\qed
\end{example}

We conclude our examination of $\text{KS}(t)$ by considering its $\varepsilon$-stationary points. 
First, we adopt the definition of the latter notion as used in \cite{kanzow:Epsilon} to $\text{KS}(t)$.
\begin{definition}[$\varepsilon$-stationarity, \cite{kanzow:Epsilon}]
    We call $\widetilde x$ an $\varepsilon$-stationary point of $\text{KS}(t)$ for $\varepsilon >0$ if there exist multipliers
    \[
\widetilde\mu_{j}, \widetilde\mu_{1,j}, \widetilde\mu_{2,j},\,j\in\left\{1,\ldots,\kappa\right\},
\]
such that
\begin{equation}
    \label{eq:ekkt1}
   \begin{array}{rcl}
\displaystyle\left\|
\nabla f\left( \widetilde x\right) +\displaystyle\sum\limits_{j=1}^{\kappa}
\widetilde\mu_{j} \nabla \Phi_{j}\left(\widetilde x,t\right) -
\displaystyle\sum\limits_{j=1}^{\kappa}
\widetilde\mu_{1,j} \nabla F_{1,j}\left(\widetilde x\right)
-
\displaystyle\sum\limits_{j=1}^{\kappa}
\widetilde\mu_{2,j} \nabla F_{2,j}\left(\widetilde x\right)\right\|_{\infty}
&\le&\varepsilon,
\end{array}
\end{equation}

\begin{equation}
    \label{eq:ekkt2}
 \begin{array}{c}
 \Phi_{j}\left(\widetilde x,t\right)\le \varepsilon,
 F_{1,j}\left(\widetilde x\right) \ge -\varepsilon,
  F_{2,j}\left(\widetilde x\right) \ge -\varepsilon,
j=1,\ldots,\kappa,
    \end{array}
\end{equation}
\begin{equation}
    \label{eq:ekkt3}
 \begin{array}{c}
 \widetilde\mu_{j}\ge - \varepsilon,\widetilde\mu_{1,j}\ge - \varepsilon, \widetilde\mu_{2,j}\ge - \varepsilon, j=1,\ldots,\kappa,
     \end{array}
\end{equation}
\begin{equation}
    \label{eq:ekkt4}
 \begin{array}{c}
|\widetilde\mu_{j}\cdot \Phi_{j}\left(\widetilde x,t\right)|\le\varepsilon,  |\widetilde\mu_{1,j}\cdot F_{1,j}\left(\widetilde x\right)|\le\varepsilon, 
 |\widetilde\mu_{2,j}\cdot F_{2,j}\left(\widetilde x\right)|\le\varepsilon,
 j=1,\ldots,\kappa.
    \end{array}
\end{equation}
\end{definition}

In \cite{kanzow:Epsilon}, the authors suggest to compute $\varepsilon$-stationary points of $\text{KS}(t)$ instead of its Karush-Kuhn-Tucker points, and then to drive the parameter $t$ to zero. Let us recall their result on the convergence of $\varepsilon$-stationary points of $\text{KS}(t)$ towards C-stationary points of MPCC.  

\begin{theorem}[\cite{kanzow:Epsilon}]
\label{thm:ekkt}
Let $t^k\downarrow 0$, $\varepsilon^k=o\left(t_k\right)$.
  Suppose a sequence of $\varepsilon^k$-stationary points $\widetilde x^{k}$ of $\text{KS}(t)$ with multipliers $\left(\widetilde\mu^{k}, \widetilde\mu_{1}^{k}, \widetilde\mu_{2}^{k}\right)$ converges to
  $\bar x$ for $k \to \infty$.
  Let MPCC-LICQ be fulfilled at $\bar x \in M$.
  Further, assume that there is a constant $c>0$ such that
 for all $j\in a_{00}\left(\bar x\right)$ and all $k$ sufficiently large, it holds:
\begin{equation}
    \label{eq:condksepsilon}
\left(F_{1,j}\left(\widetilde x^k\right),F_{2,j}\left(\widetilde x^k\right)\right)\notin [(t_k,(1+c)t_k)\times((1-c)t_k,t_k)]\cup
 [((1-c)t_k,t_k)\times(t_k,(1+c)t_k)]. 
\end{equation}
Then, $\bar x$ is an C-stationary point of MPCC.
\end{theorem}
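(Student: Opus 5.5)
The plan is the classical one: rewrite the approximate stationarity condition (\ref{eq:ekkt1}) as an approximate version of (\ref{eq:cstat-1}) with aggregated multipliers, show these stay bounded via MPCC-LICQ, and pass to a limit. The sign condition (\ref{eq:cstat-2}) is then checked by a case analysis on the position of $\left(F_{1,j}(\widetilde x^k),F_{2,j}(\widetilde x^k)\right)$ relative to $(t_k,t_k)$, using (\ref{eq:condksepsilon}) in the one delicate case. Throughout, abbreviate $F_{i,j}^k=F_{i,j}(\widetilde x^k)$.

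\textbf{Step 1 (aggregated multipliers).} By the chain rule, for $F_{1,j}^k+F_{2,j}^k\ge 2t_k$ we have
\[
\nabla\Phi_j=(F_{2,j}-t)\nabla F_{1,j}+(F_{1,j}-t)\nabla F_{2,j},
\]
and otherwise
\[
\nabla\Phi_j=-(F_{1,j}-t)\nabla F_{1,j}-(F_{2,j}-t)\nabla F_{2,j}.
\]
Accordingly I define $\lambda^k_{1,j}$ and $\lambda^k_{2,j}$ as the total coefficients of $\nabla F_{1,j}(\widetilde x^k)$ and $\nabla F_{2,j}(\widetilde x^k)$ in (\ref{eq:ekkt1}). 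In the first region these are $\lambda^k_{1,j}=\widetilde\mu^k_{1,j}-\widetilde\mu^k_j(F^k_{2,j}-t_k)$ and $\lambda^k_{2,j}=\widetilde\mu^k_{2,j}-\widetilde\mu^k_j(F^k_{1,j}-t_k)$. In the second region they are $\lambda^k_{1,j}=\widetilde\mu^k_{1,j}+\widetilde\mu^k_j(F^k_{1,j}-t_k)$ and symmetrically for $\lambda^k_{2,j}$. With this notation $\|\nabla f(\widetilde x^k)-\sum_j(\lambda^k_{1,j}\nabla F_{1,j}+\lambda^k_{2,j}\nabla F_{2,j})\|_\infty\le\varepsilon^k$.

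\textbf{Step 2 (inactive coefficients vanish).} Take $j\in a_{10}(\bar x)$, so that $F^k_{1,j}\to F_{1,j}(\bar x)>0$ and $F^k_{2,j}\to0$. Then (\ref{eq:ekkt4}) gives $|\widetilde\mu^k_{1,j}|\le\varepsilon^k/F^k_{1,j}\to0$. For large $k$ we are in the first region of $\varphi$, and $|F^k_{1,j}-t_k|$ is bounded away from zero. Hence
\[
|\widetilde\mu^k_j(F^k_{2,j}-t_k)|=|\widetilde\mu^k_j\Phi_j|/|F^k_{1,j}-t_k|\le \varepsilon^k/|F^k_{1,j}-t_k|\to0,
\]
so $\lambda^k_{1,j}\to0$. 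The same argument gives $\lambda^k_{2,j}\to 0$ for $j\in a_{01}(\bar x)$, and both coefficients vanish asymptotically for indices with both $F_{i,j}(\bar x)>0$, although that case cannot occur since $\bar x\in M$. Next, boundedness: if the remaining $\lambda^k$ were unbounded, dividing by their norm and passing to a limit would give a nontrivial vanishing combination of the MPCC-LICQ gradients at $\bar x$, a contradiction. So along a subsequence $\lambda^k\to(\bar\sigma,\bar\varrho)$, and (\ref{eq:cstat-1}) holds.

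\textbf{Step 3 (sign condition on $a_{00}(\bar x)$), the main obstacle.} Fix $j\in a_{00}(\bar x)$ and split into cases. Note first that $\widetilde\mu^k_{i,j}\ge-\varepsilon^k$, and that $|\widetilde\mu^k_{i,j}|\le\varepsilon^k/F^k_{i,j}=o(1)$ whenever $F^k_{i,j}\ge t_k/2$, since $\varepsilon^k=o(t_k)$.
\begin{itemize}
\item[(i)] Second region of $\varphi$ ($F^k_{1,j}+F^k_{2,j}<2t_k$). Every term in $\lambda^k_{i,j}$ is $\ge -O(\varepsilon^k)$ up to $o(1)$ errors: $\widetilde\mu^k_j(F^k_{i,j}-t_k)$ is either $\ge0$ or small, because $\widetilde\mu^k_j\ge-\varepsilon^k$ and the relevant $F$ is bounded. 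Hence both limits are $\ge0$.
\item[(ii)] First region with both $F^k_{i,j}\le t_k$. Then $-\widetilde\mu^k_j(F^k_{2,j}-t_k)\ge -o(1)$, and $\widetilde\mu^k_{1,j}\ge-\varepsilon^k$, so again both limits are $\ge0$.
\item[(iii)] First region with both $F^k_{i,j}\ge t_k$. Then $\widetilde\mu^k_{i,j}=o(1)$ and $-\widetilde\mu^k_j(F^k_{i,j}-t_k)\le o(1)$, so both limits are $\le0$.
\item[(iv)] The mixed case, say $F^k_{1,j}>t_k>F^k_{2,j}$, where the two coefficients have opposite signs. This is where (\ref{eq:condksepsilon}) enters: it forces $F^k_{1,j}\ge(1+c)t_k$ or $F^k_{2,j}\le(1-c)t_k$.
\end{itemize}
Suppose, for instance, $F^k_{1,j}-t_k\ge ct_k$. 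Then
\[
|\widetilde\mu^k_j(F^k_{2,j}-t_k)|\le\varepsilon^k/(ct_k)\to0,
\]
and $\widetilde\mu^k_{1,j}=o(1)$ since $F^k_{1,j}>t_k$. Hence $\lambda^k_{1,j}\to0$, so $\bar\varrho_{1,j}=0$. The other alternative symmetrically gives $\bar\varrho_{2,j}=0$: there the bound $|\widetilde\mu^k_j(F^k_{1,j}-t_k)|\le\varepsilon^k/(ct_k)$ handles the $\Phi$-part, and $\widetilde\mu^k_{2,j}$ contributes only $\widetilde\mu^k_{2,j}\ge-\varepsilon^k$.

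Taking a subsequence on which a single case occurs for each $j$ yields $\bar\varrho_{1,j}\bar\varrho_{2,j}\ge0$ for all $j\in a_{00}(\bar x)$, which is (\ref{eq:cstat-2}). I expect the careful bookkeeping of $o(1)$ versus $O(\varepsilon^k/t_k)$ terms in case (iv), together with the sign of the $\widetilde\mu^k_{2,j}$ contribution there, to be the delicate point.
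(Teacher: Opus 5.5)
Your Step 3(i) is false, and this is a genuine gap. In the second region of $\varphi$, the term $\widetilde\mu^k_j(F^k_{i,j}-t_k)$ is not bounded below by $-o(1)$. When $F^k_{i,j}<t_k$ and $\widetilde\mu^k_j>0$ is large, this term is negative. Moreover, (\ref{eq:ekkt4}) only gives $|\widetilde\mu^k_j|\bigl((F^k_{1,j}-t_k)^2+(F^k_{2,j}-t_k)^2\bigr)\le 2\varepsilon^k$, which gives no control once $|F^k_{i,j}-t_k|$ is of order $\varepsilon^k$.

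The paper's Example \ref{ex:fjp} satisfies all hypotheses of the theorem and already refutes your claim. There $\widetilde x^t=(t-\varepsilon^t,t-\varepsilon^t)$ lies in the second region with $\widetilde\mu^t=1/\varepsilon^t$, and both aggregated coefficients tend to $-1$.

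More seriously, the sign condition itself fails in the mixed subcase of the second region unless (\ref{eq:condksepsilon}) is used there. Take $f=-x_1+\frac12 x_2$, $F_{1,1}=x_1$, $F_{2,1}=x_2$, and the points $\widetilde x^k=(t_k-\varepsilon^k,t_k+\frac12\varepsilon^k)$ with $\widetilde\mu^k=1/\varepsilon^k$ and $\widetilde\mu^k_{1,1}=\widetilde\mu^k_{2,1}=0$. These satisfy (\ref{eq:ekkt1})--(\ref{eq:ekkt4}) and converge to $\bar x=(0,0)$, where MPCC-LICQ holds. Yet they yield $(\bar\varrho_{1,1},\bar\varrho_{2,1})=(-1,\frac12)$. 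Such points are excluded exactly by the second box in (\ref{eq:condksepsilon}). The boxes straddle the line $F_{1,j}+F_{2,j}=2t_k$, so the hypothesis is needed in both regions of $\varphi$. Your case split invokes it only in case (iv), so it cannot be complete as organized.

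To repair (i), let $m_k=\max_i|F^k_{i,j}-t_k|$, so that $|\widetilde\mu^k_j|\,m_k^2\le 2\varepsilon^k$ in the second region, and fix $\gamma=\min\{c,\frac12\}$.

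\emph{Case $m_k\ge\gamma t_k$.} Both $\Phi$-terms are bounded by $|\widetilde\mu^k_j|\,m_k\le 2\varepsilon^k/(\gamma t_k)\to 0$. Hence both limits are $\ge 0$ by (\ref{eq:ekkt3}).

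\emph{Case $m_k<\gamma t_k$.} Then $F^k_{i,j}>t_k/2$, so $\widetilde\mu^k_{i,j}\to 0$ and $\lambda^k_{i,j}=\widetilde\mu^k_j(F^k_{i,j}-t_k)+o(1)$. If both $F^k_{i,j}\le t_k$, the two limits share a sign; if $\widetilde\mu^k_j<0$ they vanish, since then $|\widetilde\mu^k_j|\le\varepsilon^k$. The mixed subcase, say $F^k_{1,j}<t_k<F^k_{2,j}$, cannot occur. Indeed, $0<F^k_{2,j}-t_k<t_k-F^k_{1,j}<ct_k$ places the point in the second box of (\ref{eq:condksepsilon}).

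There is also a smaller slip in (iv). Your second alternative does not give $\bar\varrho_{2,j}=0$, since $\widetilde\mu^k_{2,j}$ may stay large when $F^k_{2,j}\to 0$. This is harmless: in the first region $F^k_{1,j}-t_k\ge t_k-F^k_{2,j}$, so $F^k_{2,j}\le(1-c)t_k$ already forces $F^k_{1,j}\ge(1+c)t_k$, and your first alternative covers the whole case. Steps 1 and 2 and cases (ii) and (iii) are fine.
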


Unlike in Theorem \ref{thm:t-sequence}, C-stationarity of the limiting point cannot be ensured in Theorem \ref{thm:ekkt} without an additional assumption (\ref{eq:condksepsilon}) on the approximating points. For guaranteeing convergence to M- or S-stationary points even more demanding assumptions had to be stated in \cite{kanzow:Epsilon}.
Their necessity can be explained by the fact that $\varepsilon$-stationary points may not approximate Karush-Kuhn-Tucker but rather Fritz-John points of $\text{KS}(t)$ if $\varepsilon \rightarrow 0$, see Example \ref{ex:fjp}. Here, we again encounter the phenomena of instability, which is due to the intrinsic degeneracy of $\mbox{KS}(t)$.

\begin{example}[Approximation of Fritz-John points]
\label{ex:fjp}
We consider the MPCC given in Example 4 of \cite{kanzow:Epsilon} with
$n=2$ and $\kappa=1$:
\[
\begin{array}{rll}
MPCC:&\min\limits_x & -x_1-x_2\\
&\mbox{s.t.}& x_1\cdot x_2 = 0, x_1\ge 0, x_2 \ge 0,
\end{array}
\]
where $t>0, \varepsilon^t=t^2$.
As given in \cite{kanzow:Epsilon}, the points $\widetilde x^t=\left(t-\varepsilon^t,t-\varepsilon^t\right)$ are $\varepsilon^t$-stationary points of $\text{KS}(t)$ with multipliers $\widetilde\mu^{t}=\frac{1}{\varepsilon^t}, \widetilde\mu_{1,1}^{t}=0, \widetilde\mu_{2,1}^{t}=0$.
Note that the Kanzow-Schwartz regularization  of the considered MPCC, does not have any Karush-Kuhn-Tucker points. However, it has a Fritz-John point $\widehat x^t=(t,t)$, since it holds for $\mbox{KS}(t)$:
\[
\widehat \gamma^t
\begin{pmatrix}
    -1\\-1
\end{pmatrix}
=-\widehat \mu^t \begin{pmatrix}
    0 \\ 0
\end{pmatrix}
+ \widehat \mu^t_{1,1}\begin{pmatrix}1\\0
\end{pmatrix}
+ \widehat \mu^t_{2,1}\begin{pmatrix}0\\1
\end{pmatrix}
\]
with multipliers
$\widehat \gamma^t=\widehat \mu^t_{1,1}= \widehat \mu^t_{2,1}=0$ and $\widehat \mu^t > 0$.    
Thus, $\widetilde x^t$'s do approximate a Fritz-John point of $\mbox{KS}(t)$ for $\varepsilon^t \rightarrow 0$. Note that $\varepsilon^t$-stationary points $\widetilde x^t$ converge for $t \to 0$ to the nondegenerate C-stationary point $(0,0)$ of MPCC with C-index equal to one. \qed
\end{example}

Further, we emphasize that the consideration of $\varepsilon$-stationary points instead of Karush-Kuhn-Tucker points may cause catastrophic cancellation of multipliers, see Example \ref{ex:catcanc}. We suggest that this observation helps to understand the numerical issues observed in e.g.~\cite{hoheisel:2013, nurkanovic:2024}, when MPCCs are solved by means of the Kanzow-Schwartz regularization $\mbox{KS}(t)$. In \cite{hoheisel:2013}, it is namely concluded as follows: ''Despite some theoretical advantages of some of the newer relaxation schemes, the numerical comparison favours the oldest relaxation scheme due to Scholtes.''


\begin{example}[Catastrophic cancellation]
\label{ex:catcanc}
We consider the MPCC from Example \ref{ex:fjp}
as well as the $\varepsilon^t$-stationary points $\widetilde x^t=\left(t-\varepsilon^t,t-\varepsilon^t\right)$ of the corresponding $\text{KS}(t)$ with $\varepsilon^t=t^2$. 
For any multipliers $\left(\widetilde\mu^{t}, \widetilde\mu_{1,1}^{t}, \widetilde\mu_{2,1}^{t}\right)$, we have due to (\ref{eq:ekkt1}):
\[
\lim\limits_{t\to 0} -1-\widetilde\mu^{t}\left(\widetilde x_1^t-t\right)-
\widetilde\mu_{1,1}^{t}=0.
\]
Additionally, we have in view of (\ref{eq:ekkt4})
that $\widetilde\mu_{1,1}^{t}\to 0$ for $t\to 0$.
Hence,
\[
\lim\limits_{t\to 0} -\widetilde\mu^{t}\left(\widetilde x_1^t-t\right)=1.
\]
Since $\widetilde x_1^t-t=(t-\varepsilon^t)-t$ approaches zero for $t\to 0$ we have that $\widetilde\mu^{t}$ must be unbounded.
However, $x_1^t-t$ is a difference of two small numbers. 
Therefore, this calculation might cause catastrophic cancellation for small $t$, which is amplified by the multiplication with unbounded $\widetilde\mu^{t}$.
Moreover, the same numerical issue occurs with respect to the second component.
We point out that the catastrophic cancellation generates from the degeneracies in $\mbox{KS}(t)$, cf. Example \ref{ex:fjp}. 
\qed
\end{example}

We proceed our comparison with a continuous quadrant penalty formulation from 
\cite{cafieri:2023}. There, the authors consider logical constraints, i.e.~constraints of the form
\[u(x)\le 0 \quad \mbox{or}\quad v(x)\ge0,\]
where the defining functions $u:\R^n\to\R$ and $v:\R^n\to\R$ are real-valued.
For this type of constraint, a continuous optimization approach was proposed in \cite{cafieri:2023} as an alternative to the usual discrete optimization formulations. In particular, a family of smooth quadratic penalty functions was constructed. With $\beta>1$ they are defined as 
    \[
g_{\beta} \left(u,v\right)=\left\{
\begin{array}{ll}
    0 & \mbox{for } u\le 0 \text{ or }v\ge 0, \\
     u^2 & \mbox{for } 0 < u\le\frac{-v}{\beta},\\
          \frac{1}{1-\beta^2}\cdot \left(u^2-2\beta u v+v^2\right) & \mbox{for } \frac{-v}{\beta} < u\le -\beta v,\\
          v^2 & \mbox{for } \frac{-u}{\beta}\le v <0.
\end{array}\right.
\]
In what follows, we use $g_\beta$ to get another functional description of the feasible set $M^{\text{D}(t)}$.
For this, recall that the constraints
$\max\{t-F_{1,j}(x),t-F_{2,j}(x)\}\ge0$
can be equivalently written as
\[
F_{1,j}(x)-t\le 0\quad \mbox{or}\quad
t-F_{2,j}(x)\ge 0.
\]
Let us consider the following regularization of MPCC:
\[
\text{QPF}(t): \quad
\min_{x} \,\, f(x)\quad \mbox{s.\,t.} \quad x \in M^{\text{QPF}(t)}
\]
with the feasible set
\[
    M^{\text{QPF}(t)}=\left\{
    x \in\R^n\, \left\vert\,   
    \begin{array}{l} 
    g_{\beta} \left(F_{1,j}(x)-t,t-F_{2,j}(x)\right)= 0,\\ 
    F_{1,j}(x) \ge 0, F_{2,j}(x)\ge 0, j=1,\ldots,\kappa 
            \end{array}
    \right. \right\}.
\]
It is straightforward to see that the feasible sets of $\text{QPF}(t)$ and $\text{D}(t)$ coincide, i.e.~$M^{\text{QPF}(t)}=M^{\text{D}(t)}$.
However, considering $\text{QPF}(t)$ as an instance of NLP will again result in a violation of LICQ.
 More precisely, if $x \in M^{\text{QPF}(t)}$ is a 
feasible point of $\text{QPF}(t)$ with $\mathcal{H}_{12}\left(x\right)\ne \emptyset$, we have
   \[
    \nabla g_{\beta} \left(F_{1,j}\left( x\right)-t,t-F_{2,j}\left( x\right)\right)=0,
    j \in \mathcal{H}_{12}\left( x\right).
    \]
If such $x$ turns out to be additionally a Karush-Kuhn-Tucker point of $\text{QPF}(t)$, then it must thus be degenerate. Once again, the study of $\text{QPF}(t)$ within the scope of Morse theory is hampered. E.g., it becomes impossible to trace the quadratic index of Karush-Kuhn-Tucker points of $\text{QPF}(t)$.

\color{black}

\section{Numerical results}
\label{sec:num}

To conclude our studies, we provide a preliminary numerical test on the disjunctive regularization $\text{D}(t)$.
We implemented 75 small- and middle-sized instances from the MacMPEC database of \cite{macmpec:2015} as well as Examples \ref{ex:mintosaddle}--\ref{ex:fjp} from this paper in Python (version 3.13.9). We used the SLSQP method from SciPy (version 1.16.3) of \cite{scipy:2020} for solving them. In addition, we repeated the test with IPOPT (version 3.14.19) of \cite{ipopt:2006} using the python wrapper cyipopt (version 1.6.1) of \cite{cyipopt:2026}.
In order to assess the feasibility of an obtained solution, we used the maximum constraint violation from \cite{hoheisel:2013}.
 Given additional equality constraints $h(x)=0$ and inequality constraints $g(x)\ge0$ in MPCC, the maximum constraint violation is defined as
\[
\text{maxvio}(x) = \max\left\{-\min\left\{0, g(x)\right\}, \left|h(x)\right|, \left|\min\left\{F_{1}(x), F_{2}(x)\right\}\right|\right\}.
\]
For the implemented algorithm we again  closely follow \cite{hoheisel:2013}.  
Let $R\in \{\text{KS},\text{D},\text{S}\}$ be a particular regularization tested by Algorithm \ref{alg:ks}.

{
\centering
\begin{minipage}{1\linewidth}
\begin{algorithm}[H]
\caption{}
\label{alg:ks}
\small
\begin{algorithmic}
\algblock{Begin}{End}
\Require A starting vector $x^0$, an initial relaxation parameter $t_0$, and parameters $t_{\text{min}} > 0$,
$\epsilon > 0$, $\sigma \in (0, 1)$.
\Begin
\State $k\gets0.$
\While{$(t_k \ge t_{\text{min}}$ and $\text{maxvio}\left(x^k\right) > \epsilon)$ or $k = 0$}
\State Find an approximate solution $x^{k+1}$ of the relaxed problem $R(t_k)$ using $x^k$ as \State starting vector.
\State $t_{k+1} \gets\max\limits_{l=1,2,3,\ldots}\left\{\sigma^l\cdot t_k\,\left\vert\,x^{k+1}\text{ is not feasible for } R\left(\sigma^l\cdot t_k\right)\text{ and }\sigma^l\cdot t_k\ge t_{\text{min}}\right.\right\}$.
\State $k\gets k+1$.
\EndWhile
\End
\State \textbf{Return:} The final iterate $\bar x_R := x^k$, the corresponding function value $f\left(\bar x_R\right)$, the maximum constraint violation $\text{maxvio}\left(\bar x_R\right)$, and the computational time.
\end{algorithmic}
\end{algorithm}
\end{minipage}
}

\noindent
The parameters were also chosen as in \cite{hoheisel:2013}: 
 \[
 t_0=1,  \quad 
 t_{\text{min}}=10^{-15}, \quad \epsilon=10^{-6}, \quad \sigma=\left\{
 \begin{array}{ll}
      0.01& \text{for }R \in \{\text{KS},\text{D}\}, \\
      0.0001& \text{for } R = \text{S}.
 \end{array}\right.
 \]
 Finally, if the starting vector $x^0$ was not specified in the problem data, we set each non-declared value to zero. Note that therefore Example \ref{ex:biactive} differs from \verb|kth1| as given in the MacMPEC database with respect to the choice of $x^0$.
Instances, for which the returned maximum constraint violation $\text{maxvio}\left(\bar x_R\right)$ exceeded $\epsilon$, were considered infeasible, and the function value 
$f\left(\bar x_R\right)$ as well as the computational time were set to $\infty$, cf. \cite{hoheisel:2013}.
For assessing the obtained functional values $f\left(\bar x_R\right)$, we once again follow \cite{hoheisel:2013} to define the normalized relative error. The latter is adjusted to the cases when
$\min\left\{f\left(\bar x_R\right)\,\left\vert\,R \in \{\text{KS},\text{D},\text{S}\}\right.\right\}\in\{0,\infty\}$:
\[
\bar f_R=\left\{
\begin{array}{ll}
     \displaystyle \frac{f\left(\bar x_R\right)}{\epsilon_M}&\text{for }
     \min\left\{f\left(\bar x_A\right)\,\left\vert\,A \in \text{KS},\text{D},\text{S}\}\right.\right\}=0,\\
          \displaystyle \infty&\text{for }
     \min\left\{f\left(\bar x_A\right)\,\left\vert\,A \in \text{KS},\text{D},\text{S}\}\right.\right\}=\infty,\\
     \displaystyle \frac{f\left(\bar x_R\right)-\min\left\{f\left(\bar x_A\right)\,\left\vert\,A \in \{\text{KS},\text{D},\text{S}\}\right.\right\}}{|\min\left\{f\left(\bar x_A\right)\,\left\vert\,A \in \{\text{KS},\text{D},\text{S}\}\right.\right\}|}&\text{else,}
\end{array}\right.
\]
where 
$\epsilon_M$ denotes the machine epsilon \verb|numpy.finfo(float).eps| $=2^{-52}$.
Analogously, we defined the relative computational time
\[
\bar \tau_R=\left\{
\begin{array}{ll}
     \displaystyle \frac{\tau_R}{\epsilon_M}&\text{for }
     \min\left\{\tau_A\,\left\vert\,A \in \text{KS},\text{D},\text{S}\}\right.\right\}=0,\\
          \displaystyle \infty&\text{for }
     \min\left\{\tau_A\,\left\vert\,A \in \text{KS},\text{D},\text{S}\}\right.\right\}=\infty,\\
     \displaystyle \frac{\tau_R-\min\left\{\tau_A\,\left\vert\,A \in \{\text{KS},\text{D},\text{S}\}\right.\right\}}{|\min\left\{\tau_A\,\left\vert\,A \in \{\text{KS},\text{D},\text{S}\}\right.\right\}|}&\text{else,}
\end{array}\right.
\]
where $\tau_R$ denotes the CPU-time needed for solving the regularization $R$ averaged over ten runs.
The results obtained are presented in performance profiles -- originally introduced in
\cite{dolan:2002}, see Figures \ref{fig:diaslsqp} and \ref{fig:diaipopt}. Diagrams were generated using Matplotlib Pyplot (version 3.10.8) of \cite{matplotlib:2007}.

\begin{figure}
\begin{center}
\subfigure{\includegraphics[width=0.40\textwidth]{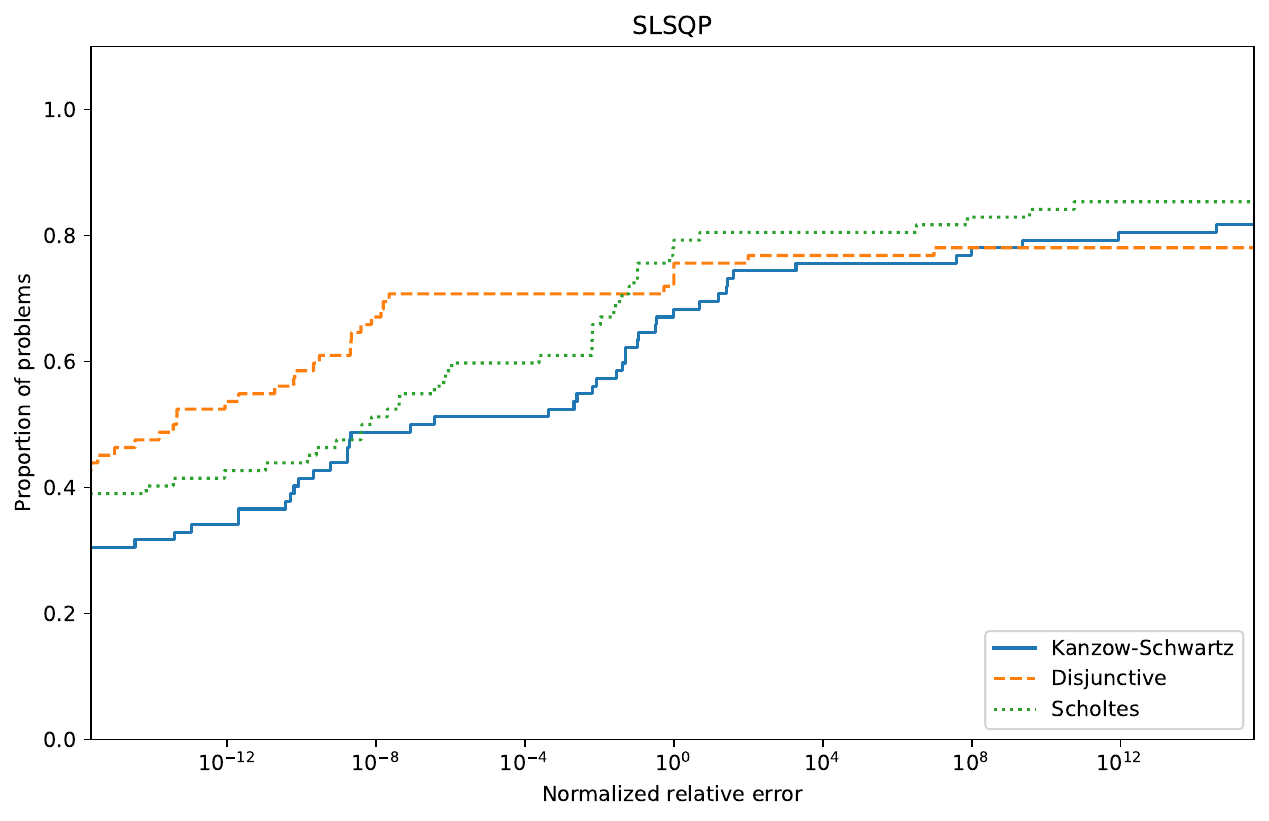}{}}
\subfigure{\includegraphics[width=0.4\textwidth]{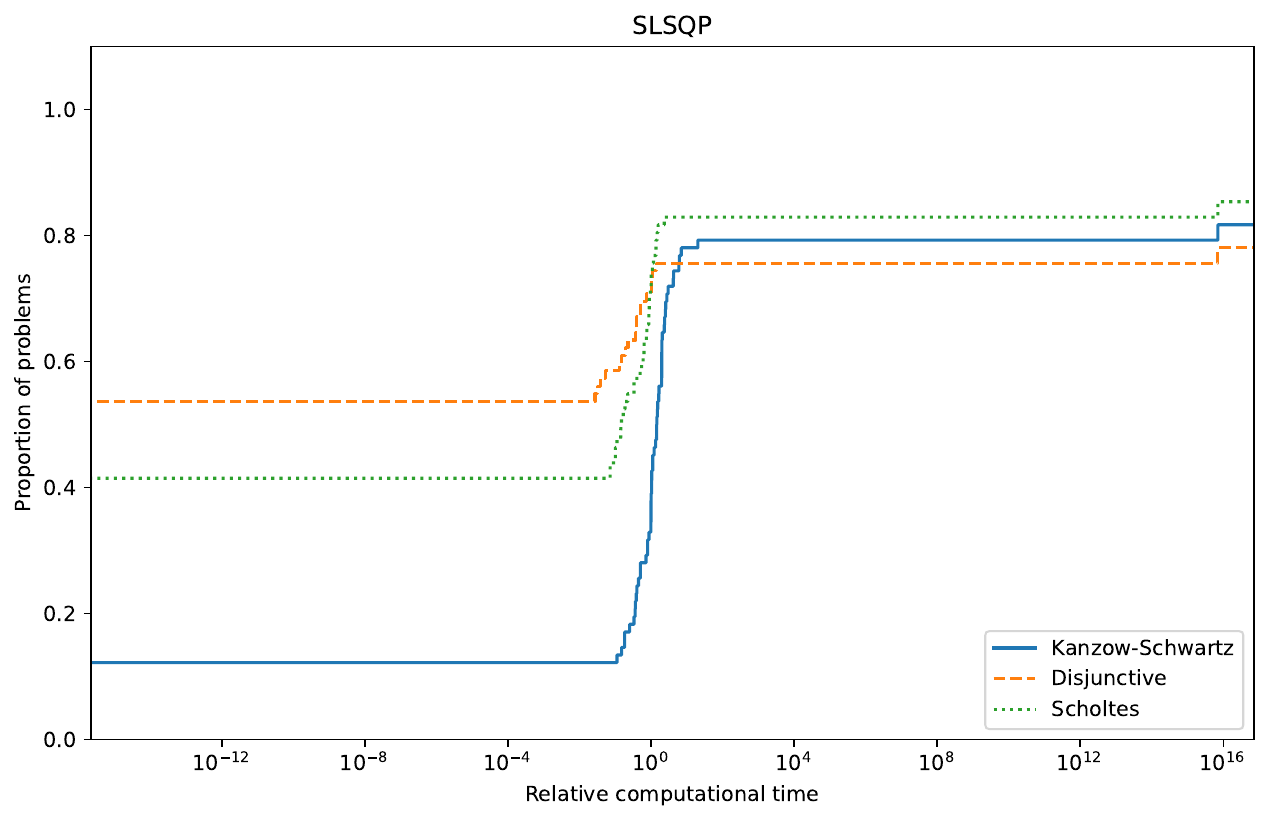}{}}\end{center}
\begin{center}
\subfigure{\includegraphics[width=0.4\textwidth]{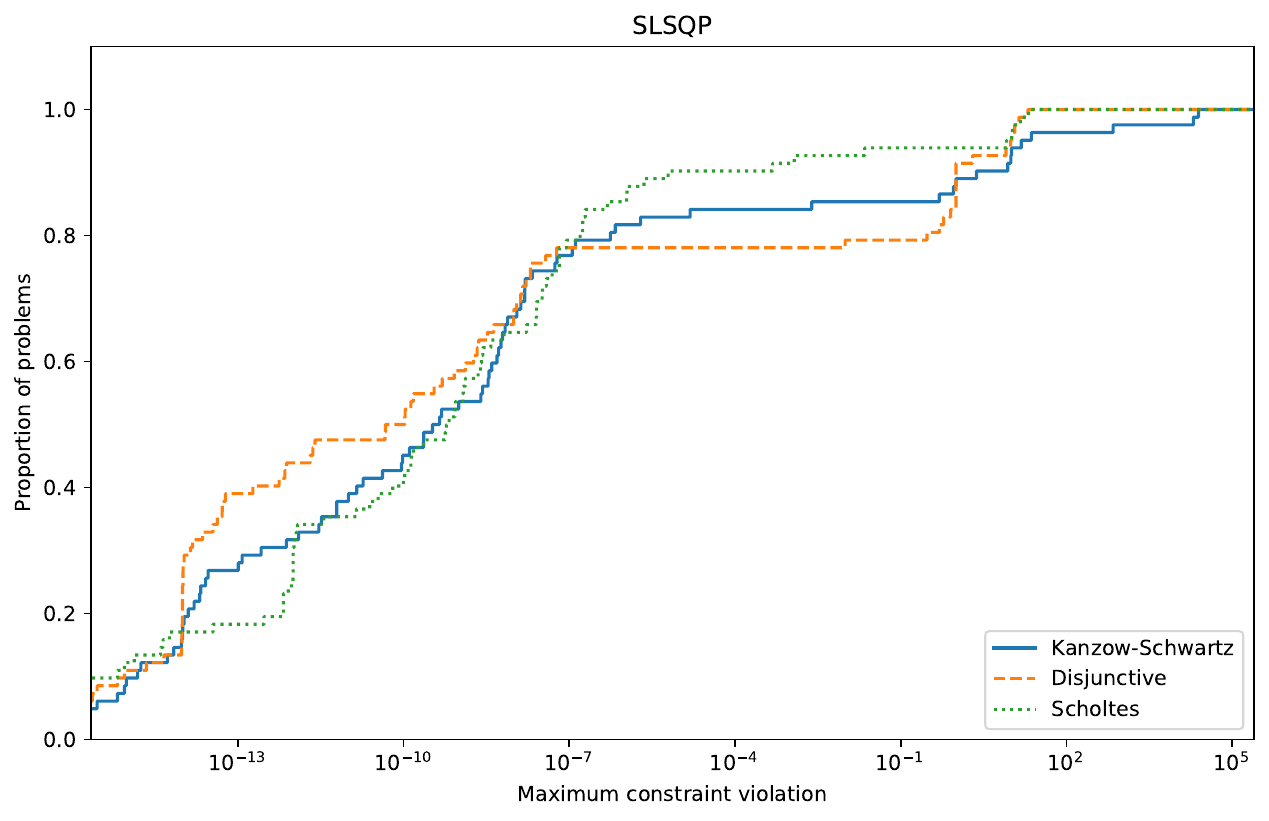}{}}
\end{center}
\caption{Performance profiles using the SLSQP solver}
\label{fig:diaslsqp}
\end{figure}

\begin{figure}
\begin{center}
\subfigure{\includegraphics[width=0.4\textwidth]{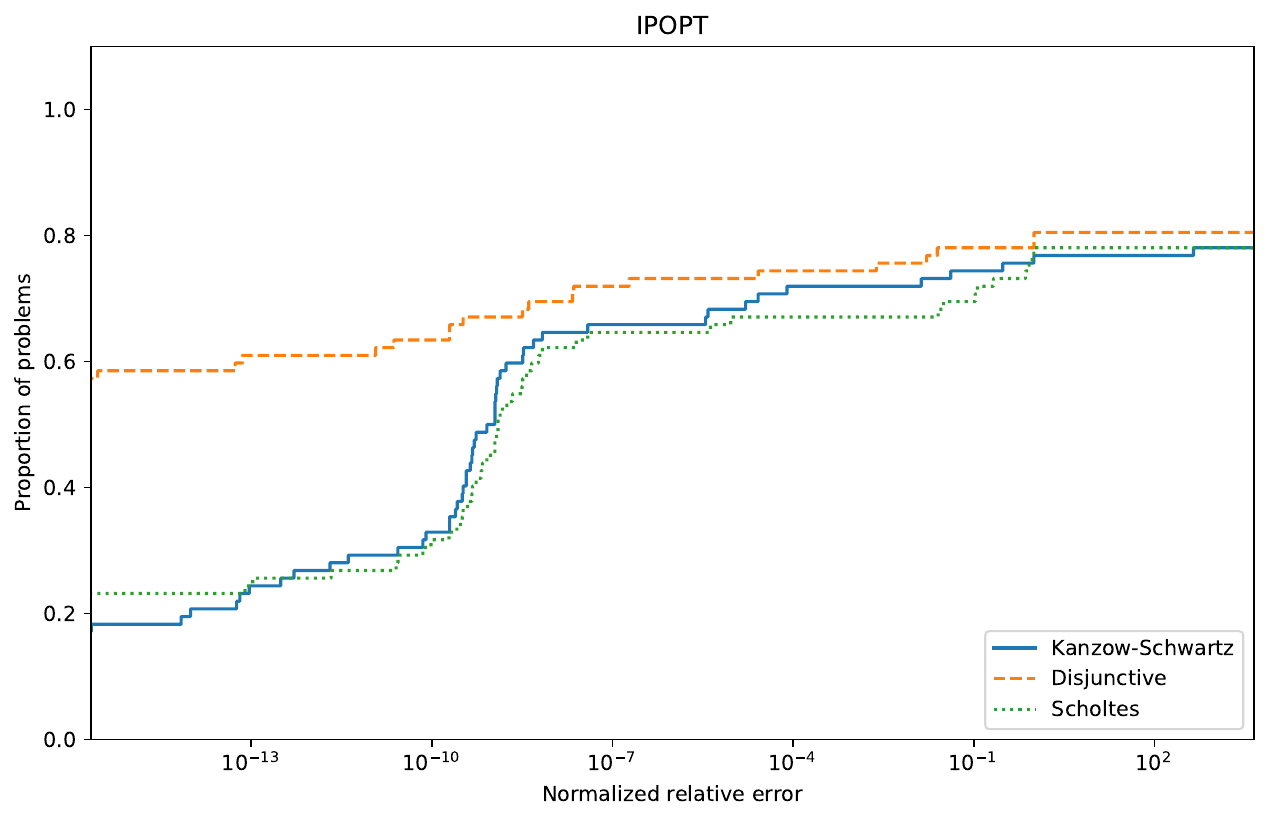}{}}
\subfigure{\includegraphics[width=0.4\textwidth]{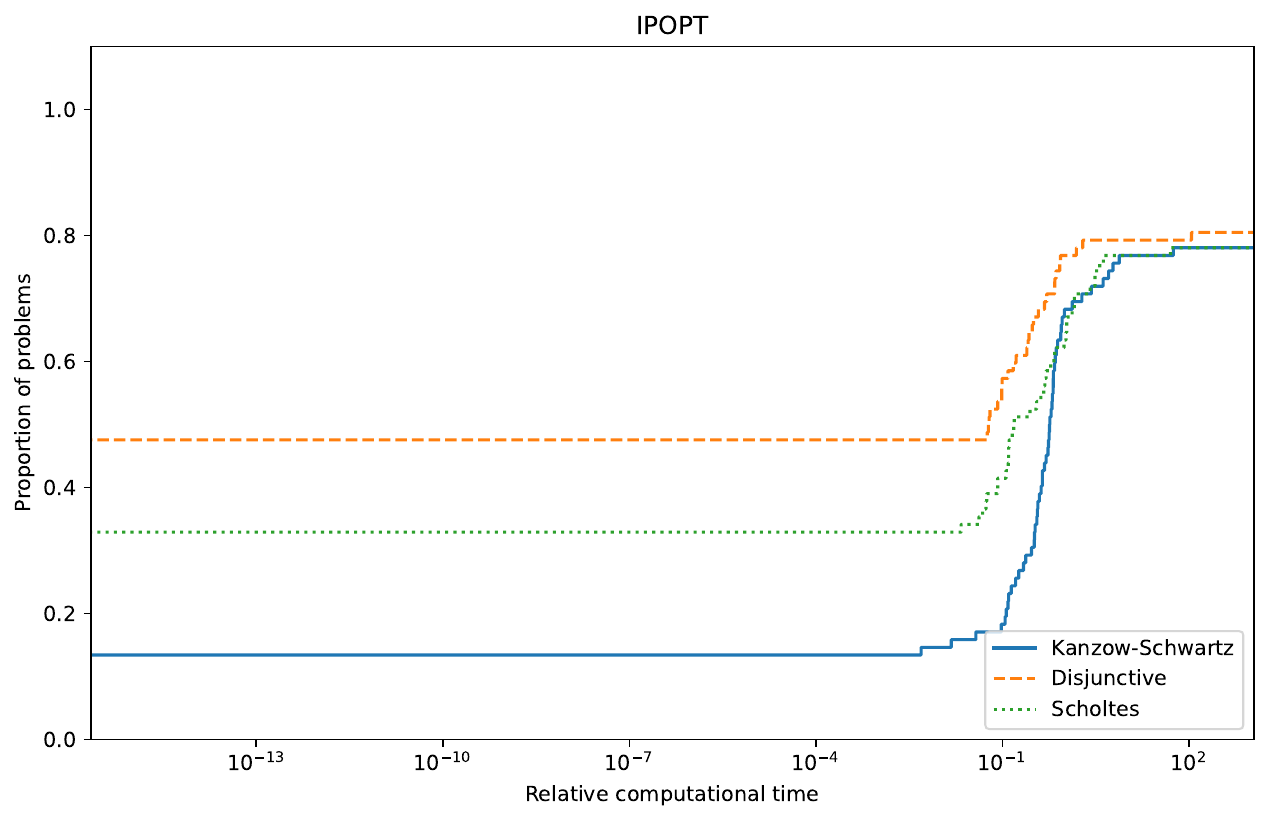}{}}\end{center}
\begin{center}
\subfigure{\includegraphics[width=0.4\textwidth]{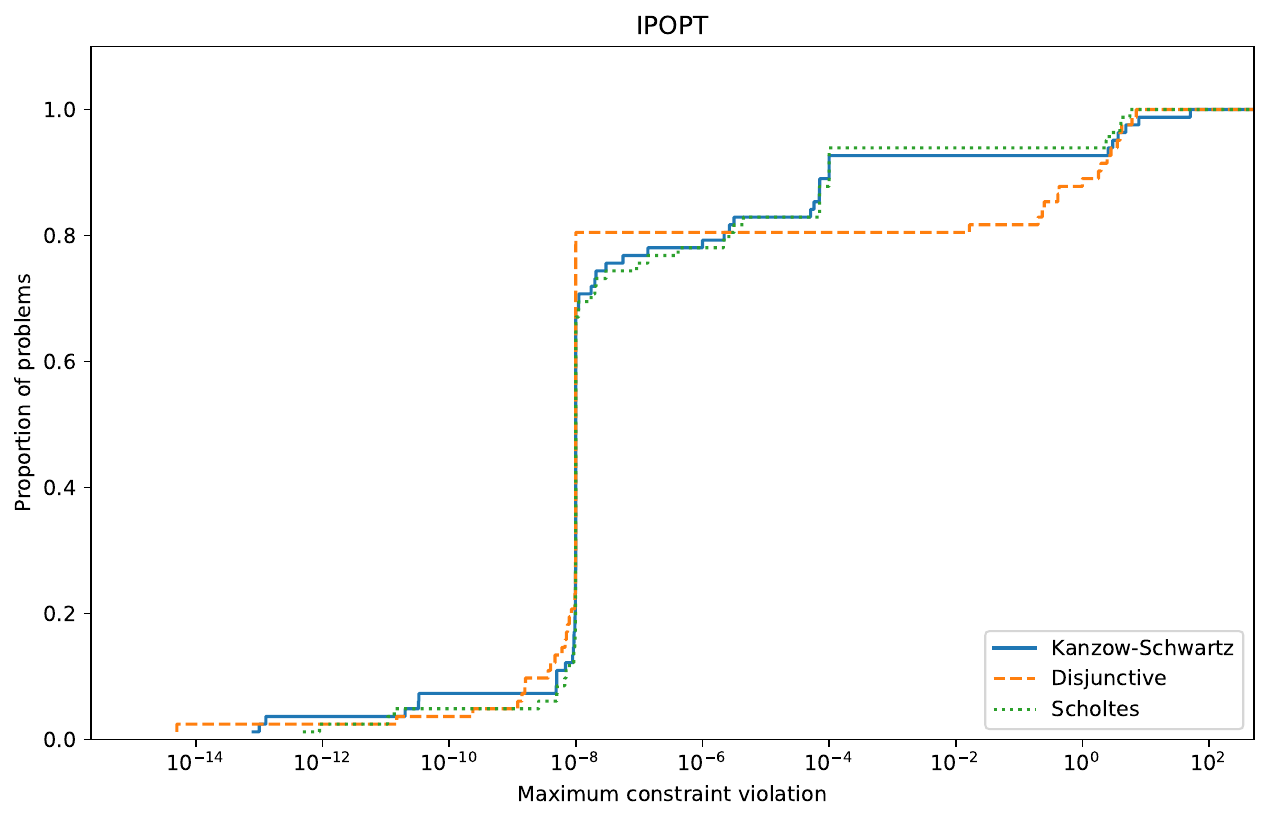}{}}
\end{center}
\caption{Performance profiles using the IPOPT solver}
\label{fig:diaipopt}
\end{figure}

For both the SLSQP and the IPOPT solvers, the performance profiles indicate that the disjunctive regularization $\text{D}(t)$ does best over the Kanzow-Schwartz regularization $\text{KS}(t)$ and the Scholtes regularization $\text{S}(t)$, at least if solving MPCCs with high accuracy for the latter.
In particular, if considering the IPOPT solver and the corresponding normalized relative errors, the disjunctive regularization outperforms the Scholtes regularization over the whole accuracy range. This is remarkable, since in \cite{nurkanovic:2024} the authors come to the conclusion that the Scholtes regularization solved by IPOPT was "the most successful method-solver combination". We see that the performance profiles in Figure \ref{fig:diaipopt} indicate that this statement has to be questioned, if not corrected.
For the comparison of computational time 
similar conclusions as for the normalized relative error can be made.

Exemplarily, we give for the problems \verb|ex9.2.2| (optimal value 100.0), \verb|ralph1| (optimal value 0.0), and \verb|scholtes4| (optimal value -3.07336e-7) the obtained functional values in Table \ref{tab:examples}. Those problems do not have S-stationary points and are considered to be difficult to solve, cf.~\cite{hoheisel:2013, raghunathan:2005}. Also here, the superiority of the disjunctive regularization is evident.

\begin{table}[H]
\centering
\begin{tabular}{|l|l|l|l|}
\hline
 & $\text{KS}(t)$ &$\text{D}(t)$  & $\text{S}(t)$ \\ \hline
\verb|ex9.2.2| (SLSQP)     & 110.5         & 100         & 100        \\ \hline
\verb|ex9.2.2| (IPOPT)      & infeasible        & infeasible        & infeasible       \\ \hline
\verb|ralph1| (SLSQP)     & infeasible        & -1.17684e-14         & infeasible        \\ \hline
\verb|ralph1| (IPOPT)      & infeasible        & -2.998e-08        & infeasible       \\ \hline
\verb|scholtes4| (SLSQP)     & -1.36158e-06         & -8.2601e-13         & infeasible        \\ \hline
\verb|scholtes4| (IPOPT)      & infeasible        & -3.07819e-09       & infeasible       \\ \hline
\end{tabular}
\caption{Functional values $f\left(\bar x_R\right)$ in comparison}
\label{tab:examples}
\end{table}

As the final part of our numerical tests, we  treat the disjunctive regularization $\text{D}(t)$ with a more tailored solver. For that, let us recall that $\text{D}(t)$ is a disjunctive optimization problem. Therefore, a solver designed exclusively for the latter type of optimization problems might yield better results than the NLP-solvers SLSQP and IPOPT. In order to examine this, we implemented the same 82 problems in GAMS (version: 52.5.0) and used the disjunctive optimization solver LogMIP of \cite{logmip} to solve $\text{D}(t)$. 
For simplicity, we set $t=t_{\text{min}}$ and solved the disjunctive regularization $\text{D}(t)$ by calling the LogMIP solver.
The obtained results were then compared with those of SLSQP and IPOPT as described in Algorithm \ref{alg:ks}, see  Figure \ref{fig:diadisj}.
While the performance profiles still indicate that IPOPT is best in terms of the normalized relative error, there is a clear tradeoff to the computational time. Further, the maximum constraint violation is noticeable. Namely, IPOPT seems to make use of the regularized feasible set best among the solvers, while LogMIP convinces if it comes to high accuracy. 

\begin{figure}[h]
\begin{center}
\subfigure{\includegraphics[width=0.4\textwidth]{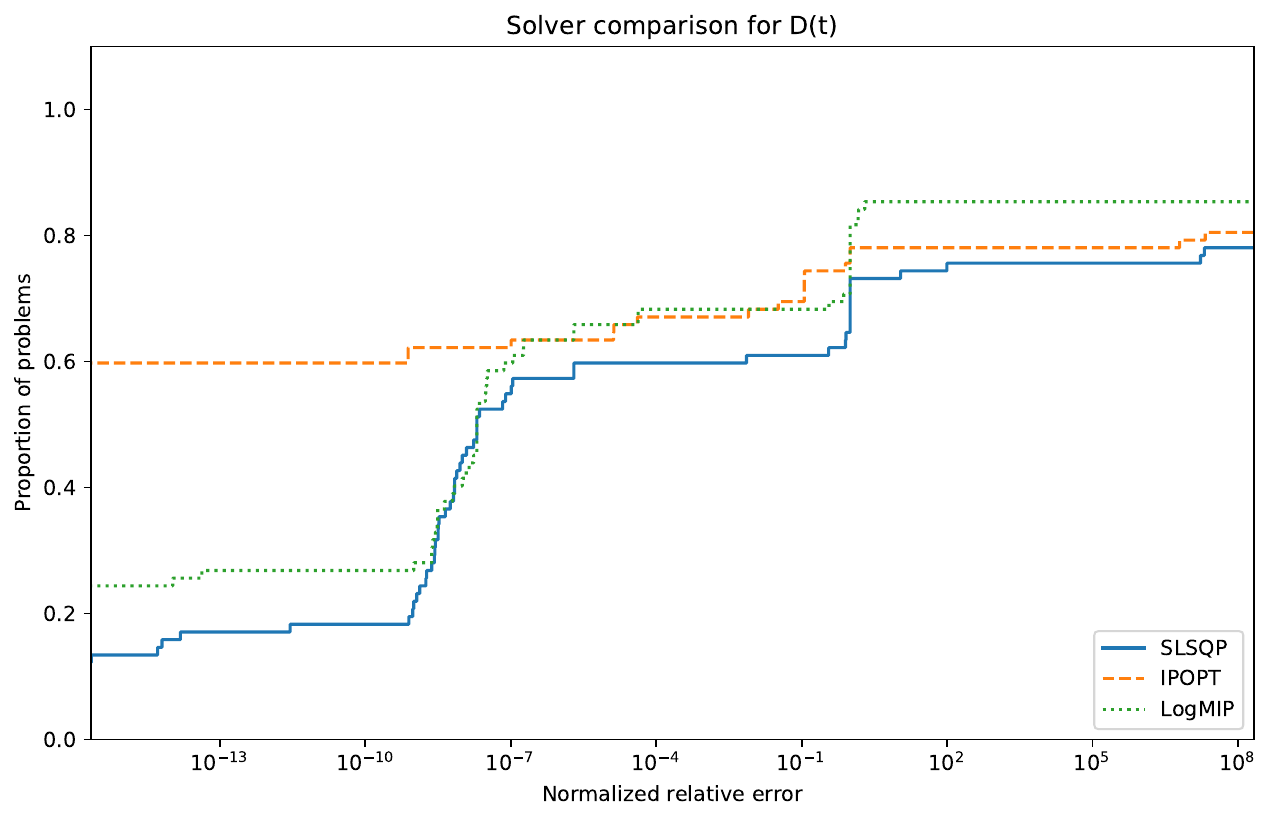}{}}
\subfigure{\includegraphics[width=0.4\textwidth]{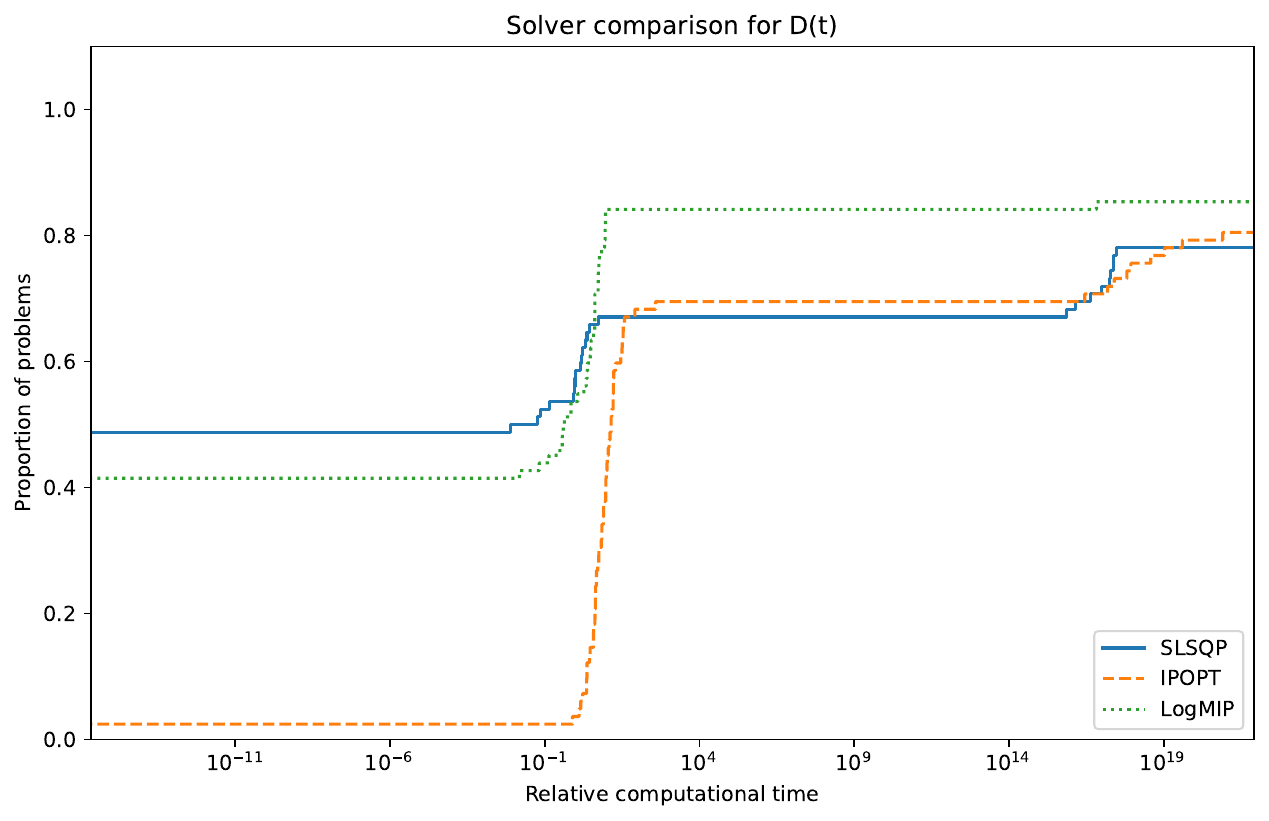}{}}    \end{center}
\begin{center}
\subfigure{\includegraphics[width=0.4\textwidth]{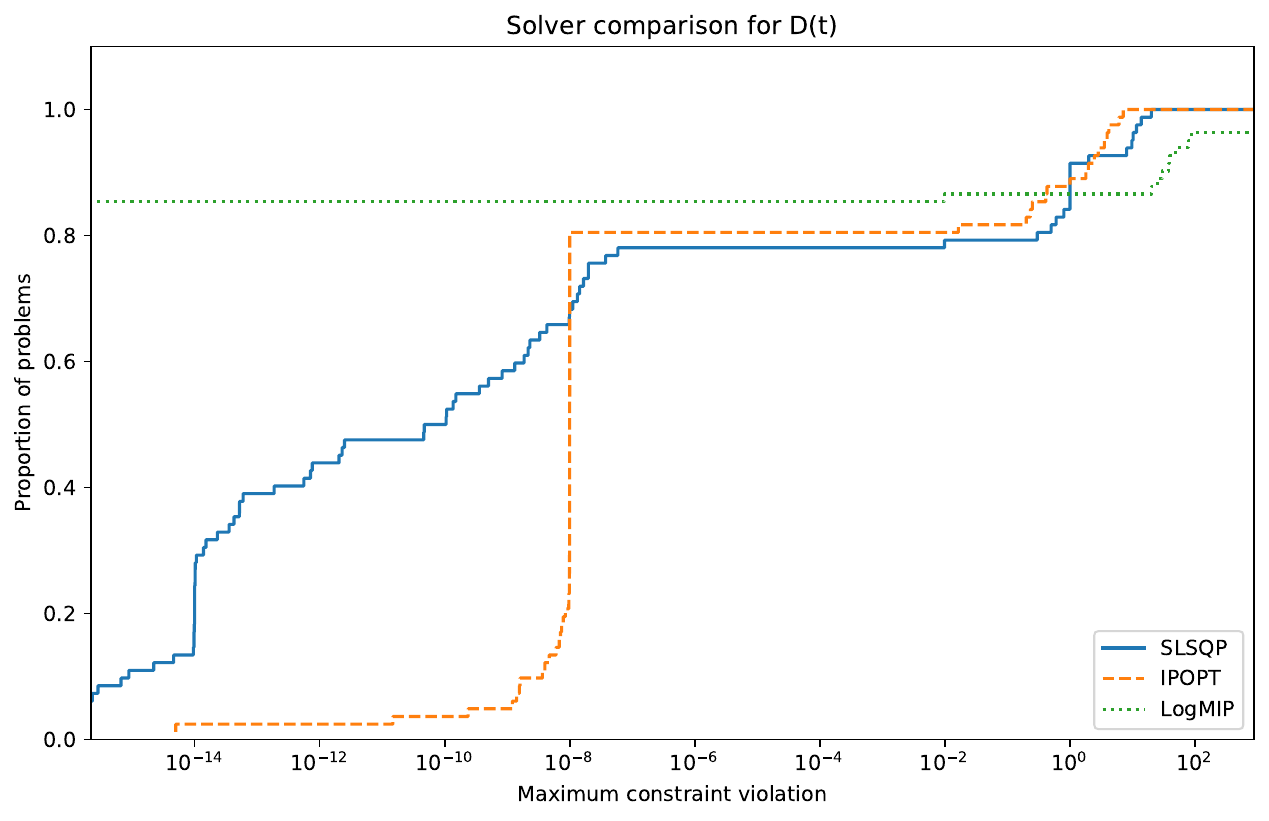}{}}
\end{center}
\caption{Performance profiles for $\text{D}(t)$ using different solvers}
\label{fig:diadisj}
\end{figure}


\bibliographystyle{apalike}
\bibliography{lit}
\end{document}